\documentclass[a4paper, 11pt, reqno]{amsart}

\usepackage{amsmath,amssymb,amscd,amsthm,amsxtra, esint,bbm}

\usepackage{mathtools}
\usepackage[linktoc=page]{hyperref}

\usepackage{enumitem}

\usepackage{marginnote}

\usepackage{tikz}

\usepackage{cases}%%%

%%%%%%%%%%% 2020 MSC %%%%%%%%%%%%%%%
\makeatletter
\renewcommand{\subjclassname}{%
  \textup{1991} Mathematics Subject Classification}
\@xp\let\csname subjclassname@1991\endcsname \subjclassname
\@namedef{subjclassname@2000}{%
  \textup{2000} Mathematics Subject Classification}
\@namedef{subjclassname@2010}{%
  \textup{2010} Mathematics Subject Classification}
  \@namedef{subjclassname@2020}{%
  \textup{2020} Mathematics Subject Classification}
\makeatother

\headheight=8pt
\topmargin=0pt
\textheight=624pt
\textwidth=432pt
\oddsidemargin=18pt
\evensidemargin=18pt

\allowdisplaybreaks[2]

\sloppy

\hfuzz  = 0.5cm 

\setlength{\pdfpagewidth}{8.50in}
\setlength{\pdfpageheight}{11.00in}

\renewcommand{\th}{\tau_h}

\newcommand{\scrit}{s_{\textup{crit}}}
\newcommand{\sscal}{s_{\text{scal}}}
\newcommand{\sconf}{s_{\text{conf}}}
\newcommand{\damp}{\textup{damp}}

\newtheorem{theorem}{Theorem} [section]

\newtheorem{lemma}[theorem]{Lemma}
\newtheorem{proposition}[theorem]{Proposition}
\newtheorem{remark}[theorem]{Remark}

\newtheorem{corollary}[theorem]{Corollary}

%Lower/Upper bound appears below /above the integral sign

\DeclareMathOperator*{\supp}{supp}

%Roman I
\newcommand{\I}{\hspace{0.5mm}\text{I}\hspace{0.5mm}}
%Roman II
\newcommand{\II}{\text{I \hspace{-2.8mm} I} }
%Roman III
\newcommand{\III}{\text{I \hspace{-2.9mm} I \hspace{-2.9mm} I}}

%Roman IV
\newcommand{\IV}{\text{I \hspace{-2.9mm} V}}

\newcommand{\noi}{\noindent}
\newcommand{\Z}{\mathbb{Z}}
\newcommand{\R}{\mathbb{R}}
\newcommand{\C}{\mathbb{C}}
\newcommand{\T}{\mathbb{T}}

\newcommand{\deff}{\overset{\textup{def}}{=}}

\let\P= \undefined
\newcommand{\P}{\mathbf{P}}

\newcommand{\E}{\mathbb{E}}
\newcommand{\En}{\mathcal{E}}

\renewcommand{\L}{\mathcal{L}}
\newcommand{\A}{\mathcal{A}}

\newcommand{\Id}{\mathrm{Id}}

\newcommand{\F}{\mathcal{F}}

\newcommand{\B}{\mathcal{B}}

\newcommand{\al}{\alpha}
\newcommand{\be}{\beta}
\newcommand{\dl}{\delta}

\newcommand{\g}{\mathfrak{g}}
\newcommand{\Dl}{\Delta}
\newcommand{\eps}{\varepsilon}
\newcommand{\kk}{\kappa}
\newcommand{\s}{\sigma}
\newcommand{\Si}{\Sigma}

\newcommand{\wt}{\widetilde}

\newcommand{\dx}{\partial_x}
\newcommand{\dt}{\partial_t}

\newcommand{\W}{\mathcal{W}}

\renewcommand{\o}{\omega}

\renewcommand{\O}{\Omega}

\newcommand{\Prob}{\mathbb{P}}

\newcommand{\x}{\mathbf{x}}
\newcommand{\y}{\mathbf{y}}

\newcommand{\les}{\lesssim}

%Japanese Bracket
\newcommand{\jb}[1]
{\langle #1 \rangle}

\renewcommand{\b}{\beta}

\newcommand{\ind}{\mathbf 1}

\renewcommand{\S}{\mathcal{S}}
\newcommand{\Dp}{\mathcal{D}'}

\newcommand{\J}{\mathcal{J}}
\newcommand{\ZZ}{Z}

\newcommand{\M}{\mathcal{M}}

\newcommand{\N}{\mathbb{N}}

\renewcommand{\H}{\mathcal{H}}

\newcommand{\Ld}{\Lambda}
\newcommand{\ld}{\lambda}

\newcommand{\Dlg}{\Delta_{\g}}

\newtheorem*{ackno}{Acknowledgements}

\numberwithin{equation}{section}
\numberwithin{theorem}{section}

\begin{document}
\baselineskip = 15pt

\title[Stochastic NLW dynamics on surfaces]
{Stochastic nonlinear wave dynamics\\ on compact surfaces}

\author[T.~Oh, T.~Robert, and N.~Tzvetkov]
{Tadahiro Oh, Tristan Robert, and Nikolay Tzvetkov}

\address{
Tadahiro Oh, School of Mathematics\\
The University of Edinburgh\\
and The Maxwell Institute for the Mathematical Sciences\\
James Clerk Maxwell Building\\
The King's Buildings\\
Peter Guthrie Tait Road\\
Edinburgh\\ 
EH9 3FD\\
 United Kingdom}

\email{hiro.oh@ed.ac.uk}

\address{
Tristan Robert, Institut  \'Elie Cartan\\
 Universit\'e de Lorraine, B.P. 70239, F-54506 Vand\oe uvre-l\`es-Nancy Cedex\\France}

\email{tristan.robert@univ-lorraine.fr}

\address{
Nikolay Tzvetkov, ENS Lyon,\\
UMPA-UMR 5669, 46, allée d'Italie, 69007
Lyon,\\ France
}
\email{nikolay.tzvetkov@ens-lyon.fr}

\subjclass[2020]{35L71, 60H15}
\keywords{nonlinear wave equation; 
stochastic nonlinear wave equation;
nonlinear Klein-Gordon equation;  Gibbs measure;  white noise; renormalization}
\begin{abstract}
We study 
the Cauchy problem  for the nonlinear wave equations (NLW) 
with random data and/or stochastic forcing
 on a two-dimensional compact Riemannian manifold without boundary. 
 (i) We first study the defocusing stochastic damped NLW driven by additive space-time white noise, and with initial data distributed according to the Gibbs measure.
 By introducing a suitable space-dependent renormalization, 
 we prove  local well-posedness of the renormalized equation.
Bourgain's invariant measure argument then allows
us to establish 
  almost sure global well-posedness and invariance of the Gibbs measure for the renormalized stochastic damped NLW.
  (ii)~Similarly, we study the random data defocusing NLW (without stochastic forcing or damping), and establish the same results as in the previous setting.
  (iii)~Lastly, we study  the stochastic NLW without damping.  
 By introducing a space-time dependent renormalization, 
 we  prove its local well-posedness
 with deterministic initial data in all subcritical spaces. 
  
These results extend the corresponding recent results
on the two-dimensional torus obtained by (i)~Gubinelli-Koch-Oh-Tolomeo~(2021), (ii)~Oh-Thomann~(2020), and (iii)~Gubinelli-Koch-Oh~(2018),
to a general class of compact manifolds.
The main ingredient is the Green's function estimate for the Laplace-Beltrami operator 
in this setting 
to study regularity properties of stochastic terms appearing in each of the problems.

\end{abstract}

%
%
%\date{\today}
%%
%
\maketitle
\tableofcontents

\baselineskip = 14pt

\section{Introduction}
\subsection{Nonlinear wave equations}
We investigate the stochastic damped nonlinear wave equations (SDNLW):
\begin{align}\label{SDNLW}
\dt^2 u + (1 -  \Dlg)  u + \dt u +   u^k = \sqrt{2}\xi, 
\qquad (t,x) \in \R_+ \times \M,
\end{align}

\noi
\noi
where 
the unknown $u$ is real-valued, 
$k \geq 2$ is an integer, and $(\M,\g)$ is a two-dimensional compact Riemannian manifold without boundary.
In particular, we
study the Cauchy problem for \eqref{SDNLW}
with random initial data of low regularity distributed according to the Gibbs measure and with stochastic forcing $\xi$ given by the space-time white noise. See below for precise definitions.

We also consider
the nonlinear wave equations (NLW) without stochastic forcing:
\begin{align}\label{NLW}
\dt^2 u + (1 -  \Dlg)  u +   u^k = 0, 
\qquad (t,x) \in \R \times \M, 
\end{align}

\noi
with initial data distributing according to the Gibbs measure, as well as the stochastic nonlinear wave equations (SNLW) with deterministic data:
\begin{align}\label{SNLW}
\dt^2 u + (1 -  \Dlg)  u +   u^k = \xi, 
\qquad (t,x) \in \R \times \M.
\end{align}

In the case of the two-dimensional torus $\T^2 = (\R/\Z)^2$, 
these equations have been studied
in recent works 
by Gubinelli-Koch-Oh-Tolomeo \cite{GKOT}, Oh-Thomann \cite{OT2}, and Gubinelli-Koch-Oh \cite{GKO}.
Our main goal in this paper is to primarily investigate the Cauchy problem for \eqref{SDNLW}\footnote{Our argument also works for \eqref{NLW} and \eqref{SNLW}.} to extend the main results in \cite{GKOT, OT2, GKO}
to a more general setting of  two-dimensional compact Riemannian manifolds without boundary.

\begin{remark}\rm

The equations \eqref{SDNLW}, \eqref{NLW},  and \eqref{SNLW} 
indeed correspond
to the (stochastic) nonlinear (damped) Klein-Gordon equations.
As for local-in-time results, 
the same results with  inessential modifications
also hold for
the (stochastic) nonlinear wave equations,
where we replace $(1-\Dlg)$ in the left-hand side of \eqref{SDNLW}, \eqref{NLW}, 
and \eqref{SNLW}
by $- \Dlg u$.
In the following, we simply refer to 
\eqref{SDNLW}, \eqref{NLW}, and \eqref{SNLW} as 
the (stochastic) nonlinear wave equations.

\end{remark}

\subsection{The $\Phi^4_2$-measure and the corresponding hyperbolic dynamical problem}
The motivation to study SDNLW comes from looking at a hyperbolic counterpart of the so-called stochastic quantization equation (SQE) which is given by the following parabolic equation
\begin{align}\label{SQE}
\dt u = \Dl u + u^3 - \infty\cdot u + \xi,
\end{align}
where $\xi$ is as above and ``$\infty\cdot u$"~refers to a counter term arising in the renormalization procedure. The equation \eqref{SQE} was introduced in \cite{PW} as a dynamical problem whose limiting behavior of the solutions as $t\rightarrow +\infty$ is at least formally given by the $\Phi^4_2$-measure:
\begin{align*}
``d\rho_4 = Z^{-1}\exp\Big(-\int_{\M}|\nabla u|^2 dx -\int_{\M}u^2 dx -\frac{1}{4}\int_{\M}(u^4-\infty\cdot u^2) dx\Big)du".
\end{align*}

\noi
Hereafter, we use 
 $Z$ to denote various normalizing constants. This measure does not make sense as it is, 
 since, first of all,  the measure ``$du$" is not well defined. This is overcome by viewing
 it as 
\begin{align}\label{def-rho4}
d\rho_4 = Z^{-1}e^{-\int_{\M}(u^4-\infty\cdot u^2) dx}d\mu_0,
\end{align}
where $\mu_0$ is a Gaussian measure on the Sobolev space $H^s(\M)$ for any $s<0$ with covariance operator $(1-\Dlg)^{s-1}$ (see \eqref{def-mu} below). In particular, the nonlinearity $u^4$ is not integrable with respect to $\mu_0$, 
and hence there is a need for a renormalization in \eqref{def-rho4} and correspondingly in~\eqref{SQE}, which we discuss in the following subsection.

Now, for a stochastic hyperbolic equation with a general power nonlinearity, the corresponding measure on the phase-space \begin{align*}
\H^s(\M)=H^s(\M)\times H^{s-1}(\M)
\end{align*} is given similarly by the formal Gibbs measure
\begin{align*}
d\rho_{k+1}(u,v) = e^{-\En(u,v)}dudv,
\end{align*}
where $v=\dt u$, and $\En(u,v)$ is the (renormalized) energy given by
\begin{align*}
\En(u,v)=\frac12\int_{\M}\big\{v^2 + |\nabla u|^2 + u^2\big\}dx + \frac{1}{k+1}\int_{\M}\,:\!u^{k+1}\!:\,dx,
\end{align*}
and $\,:\!u^{k+1}\!:\,$ denotes the renormalization of the nonlinearity. In this case, the full measure is given by
\begin{align}\label{def-rhok}
d\rho_{k+1}(u,v)=Z^{-1}e^{-\int_{\M}\,:\,u^{k+1}:\,dx}d\big(\mu_0\otimes \mu_1\big) ,
\end{align}
where $\mu_1$ is the white noise measure on $\M$. Note that when there is no stochastic forcing as in  
NLW \eqref{NLW}, since it admits the Hamiltonian structure
\begin{align*}
\dt\begin{pmatrix}
u\\v
\end{pmatrix} = J\nabla_{(u,v)}\En(u,v)\text{ with }J=\begin{pmatrix}0&1\\-1&0
\end{pmatrix},
\end{align*}
then the energy $\En$ is preserved along the flow, and so at least formally $\rho_{k+1}$ is invariant for \eqref{NLW}. On the other hand, adding a stochastic forcing in the equation breaks down the Hamiltonian structure and in particular changes the equation satisfied by the speed~$v = \dt u $.
Thus,  in order for $\mu_1$ to be stationary for $v$,  one needs to add an extra damping term, 
making the equation into 
a Langevin equation
 with the momentum $v = \dt u$.
 This leads us to consider \eqref{SDNLW}.\footnote{In the physics literature, when $k$ is odd the stochastic equation \eqref{SDNLW} is then known at the ``canonical" stochastic quantization \cite{RSS} of the $\Phi^{k+1}_2$-measure \eqref{def-rhok}.}

\subsection{Renormalization of the nonlinearity}
Let us now describe the renormalization procedure. Let $\{\varphi_n\}_{n\geq 0}$ be an orthonormal basis of $L^2(\M)$ consisting of real-valued $C^\infty(\M)$-eigenfunctions of $-\Dlg$ with corresponding eigenvalues $\{\lambda_n^2\}_{n\geq 0}$ assumed to be arranged in increasing order, so that for any $u\in\Dp(\M)$, where $\Dp(\M)$ is the dual of $C^{\infty}(\M)$, one can decompose 
\begin{align*}
u = \sum_{n\geq 0}a_n\varphi_n,
\end{align*}
for some sequence $\{a_n\}_{n\geq 0}$ of real numbers. 
Then,  we can see $\mu =\mu_0\otimes \mu_1$ as the Gaussian probability measure induced under the map
\begin{align}
 X : (\o_0,\o_1) \in \O_0\times\O_1 \longmapsto (u_0^{\o_0}, u_1^{\o_1}) 
 = \bigg( \sum_{n \ge 0} \frac{g_{n}(\o_0)}{\jb{\lambda_n}}\varphi_n,
\sum_{n \ge 0} h_{n}(\o_1)\varphi_n\bigg)\in\H^s(\M), 
\label{def-mu}
 \end{align}
 \noi
where 
$\jb{\lambda_n} = \sqrt{1+\lambda_n^2}$ and 
$\{(g_{n}, h_{n})\}_{n \ge 0}$
is a sequence of independent standard real-valued Gaussian
random variables on a probability space $(\O_0\times \O_1, \F, \Prob_0\otimes\Prob_1)$. 
From  Weyl's law~\eqref{Weyl}, 
which in particular says that $\ld_n \sim n^\frac{1}{2}$, 
it is easy to see that 
the convergence of these series holds in $L^2(\O_0\times\O_1;\H^s)$ whenever $s<0$.
Moreover $\supp \mu \subset \H^s$ for any $s<0$ but $\mu(\H^0)=0$.

Now, the space-time white noise $\xi$ is a centered Gaussian random variable on a probability space $(\O,\Prob)$ with values in 
the space of Schwartz distributions
$\S'(\R;\Dp(\M))$, which is delta correlated. 
This means that for any space-time test functions $\eta,\widetilde{\eta}\in \S(\R;C^{\infty}(\M))$, we have
\begin{align*}
\E\big[\xi(\eta)\xi(\widetilde{\eta})\big] = \langle \eta,\widetilde{\eta}\rangle_{L^2_{t,x}},
\end{align*}
where $\langle \cdot,\cdot\rangle_{L^2_{t,x}}$ stands for the usual inner product on $L^2(\R\times\M)$.
In the following, we impose that 
the space-time white noise $\xi$ is independent 
 of $g_n,h_n$ in \eqref{def-mu}.

In particular, we see that $\xi$ is given by $\dt B$, where $B$ is a two-sided cylindrical Wiener process on $L^2(\M)$, defined as
\begin{align}\label{STWN}
B(t)=\sum_{n\geq 0}\beta_n(t)\varphi_n,
\end{align}
with $\be_n(0) = 0$ and 
$\be_n(t) = \jb{\xi, \ind_{[0, t]} \cdot \varphi_n}_{ t, x}$.
Here, $\jb{\cdot, \cdot}_{t, x}$ denotes 
the duality pairing on $\R\times \M$.
As a result, 
we see that $\{ \be_n \}_{n \ge 0}$ is a family of mutually independent (and independent of $g_n,h_n$ above) two-sided Brownian motions on $(\O,\Prob)$. In particular, we have $B\in C^{0,b}(\R;W^{s-1,\infty}(\M))$ almost surely for any $b\in[0,\frac12)$ and $s<0$. 
In the following, we look at the base probability space $(\O_0\times\O_1\times\O,\Prob_0\otimes\Prob_1\otimes\Prob)$ as 
\begin{align*}
\big(\H^s(\M)\times\O,\mu\otimes\Prob\big),
\end{align*}
where  
\begin{equation*}
\mu = \mu_0\otimes\mu_1 = X_{\star}(\Prob_0\otimes\Prob_1) = (\Prob_0\otimes\Prob_1)\circ X^{-1}
\end{equation*}
 is the push-forward of $X$ defined in \eqref{def-mu}.

With these notations at hand, let us first discuss the renormalization for \eqref{SDNLW}. A  solution~$u$ to \eqref{SDNLW} can be represented through Duhamel's formula:
\begin{align}
u(t)= \dt V(t)u_0 + V(t)(u_0+u_1) - \int_0^tV(t-t')u^k(t')dt'+\sqrt{2}\int_0^tV(t-t')dB(t'),
\label{X1}
\end{align}
where $(u_0,u_1)$ is as in \eqref{def-mu}, and
\begin{align}\label{def-V}
V(t) = e^{-\frac{t}2} \frac{\sin\big(t\sqrt{\frac34-\Dlg}\big)}{\sqrt{\frac34-\Dlg}}
\end{align}
is the propagator for the damped Klein-Gordon equation:
$\dt^2 u + (1 -  \Dlg)  u + \dt u = 0$, 
 i.e. the (deterministic) linear part of \eqref{SDNLW}.

We see that the roughness of a solution $u$ %comes (at least) from the linear term
already appears at the linear level:
\begin{align}\label{def-Psidamp}
\Psi_{\damp}(t)\deff\dt V(t)u_0 + V(t)(u_0+u_1) +\sqrt{2}\int_0^tV(t-t')dB(t'), 
\end{align}
which lies in $C\big(\R;H^s(\M)\big)$ almost surely for any $s<0$ (see Proposition~\ref{prop-psidamp} below). The strategy to define the product $u^k$ in the
Duhamel formula \eqref{X1} is then to regularize the rough term $\Psi_{\damp}$ and to replace $u^k$ by another well-chosen\footnote{In particular, note that the renormalized power defined below is a monic polynomial with its lower-order coefficients becoming infinite as the regularization is removed, 
which justifies the notation $\infty\cdot u$ in \eqref{SQE} for the cubic case.} polynomial such that, as we remove the regularization, the corresponding renormalized power ${\displaystyle \,:\!u^{k}\!:\,}$ converges to some finite random variable almost surely.

More precisely, for any $N\geq 0$, let $\P_N$ be (a smooth version of) the frequency projection on the set of frequencies $\{\lambda_n\leq N\}$ (see \eqref{def-P} below). For each $(t,x) \in \R\times \M$,
 $\P_N \Psi_{\damp}(t,x)$ 
 is then a mean-zero real-valued Gaussian random variable with variance 
\begin{align}
\label{sN}
\begin{split}
\s_N(x) 
& \stackrel{\text{def}}{=} \E [ (\P_N \Psi_{\damp}(t,x))^2]
= \E[(\P_Nu_0(x))^2]\\
& \hspace{0.45mm}
 = \sum_{n\geq 0}\psi_0(N^{-2}\lambda_n^2)^2\frac{\varphi_n(x)^2}{\jb{\lambda_n}^2}  = O(\log N),
\end{split}
\end{align}
where the second equality results from the invariance of (the truncated version of)
the Gaussian  measure $\mu$ under the (truncated) linear stochastic damped wave equations given by Proposition~\ref{prop-psidamp}, and the last estimate comes from Lemma~\ref{LEM:ev} along with Weyl's law~\eqref{Weyl}.
We note that $\s_N(x)$ in \eqref{sN} is time independent.

As in the case $\M=\T^2$ investigated in \cite{OT1,OT2,GKO}, when the truncated nonlinearity $(\P_Nu)^k$ is replaced by the Wick ordered monomial defined for all\footnote{ When $\M = \T^2$, since the Gaussian process $\P_N\Psi_{\damp}(t,x)$ is also stationary in $x$, $\s_N$ is then independent of $x$. Here the renormalization must be defined pointwise in $x$.} $(t,x)\in\R\times\M$ by
 \begin{align}\label{Wick1}
 \,:\!(\P_Nu)^{k}\!:\,(t,x)=H_{k}\big(\P_Nu(t,x);\s_N(x)\big),
 \end{align}
 where $H_{k}(x,\s)$ is the $k$th Hermite polynomial, the renormalized powers of the stochastic contribution $\,:\!(\P_N\Psi_{\damp})^k\!:\,$ converge almost surely to some random variable $\,:\!\Psi_{\damp}^k\!:\,$. See Section~\ref{sec-proba} below.
 
\subsection{Well-posedness of the renormalized dynamics} 

In view of the above discussion, we look at the following smoothed renormalized version of \eqref{SDNLW}.
 \begin{align}
\begin{cases}
\dt^2 u_N + (1- \Dlg)  u_N + \dt u_N  +  H_k\big(u_N;\s_N(x)\big) = \sqrt{2}\P_N\xi, \\
(u_N,\dt u_N)\big|_{t=0}=(\P_Nu_0,\P_Nu_1),
\end{cases}~~(t,x)\in \R_+\times\M,
\label{trunc-SDNLW}
\end{align}
with the random initial data $(u_0,u_1)$ given by \eqref{def-mu}. Our main result is then the following.

\begin{theorem}\label{thm-SDNLW}
Let $k\ge 2$ be an integer and $s < 0$.
Then, there exists a stopping time $T$, $\mu\otimes\Prob$-almost surely positive, such that for any $N\in \N$,  there exists a unique solution $u_N $  to \eqref{trunc-SDNLW} which belongs $\mu\otimes\Prob$-almost surely to $C\big([0,T];H^{s}(\M)\big)$.
Moreover, $\{u_N\}_{N\in\N}$ converges $\mu\otimes\Prob$-almost surely to a stochastic process $u\in C\big([0,T];H^{s}(\M)\big)$.
\end{theorem}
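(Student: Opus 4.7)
The plan is to apply the Da Prato--Debussche trick, writing $u_N = \Psi_{\damp,N} + v_N$ where $\Psi_{\damp,N} \deff \P_N \Psi_{\damp}$ is the truncated linear stochastic evolution from \eqref{def-Psidamp} and $v_N$ is a smoother remainder. Since $\P_N$ commutes with $(1-\Dlg)$, $\Psi_{\damp,N}$ solves the linear part of \eqref{trunc-SDNLW} with data $(\P_Nu_0,\P_Nu_1)$, and by the binomial identity for Hermite polynomials together with $\E[\Psi_{\damp,N}(t,x)^2]=\s_N(x)$ from \eqref{sN}, one has
\begin{align*}
H_k\big(\Psi_{\damp,N}+v_N;\s_N\big) = \sum_{j=0}^{k}\binom{k}{j}\, \,:\!\Psi_{\damp,N}^{k-j}\!:\, v_N^{j}.
\end{align*}
Hence $v_N$ satisfies the pathwise equation
\begin{align*}
\dt^2 v_N + (1-\Dlg)v_N + \dt v_N = -\sum_{j=0}^{k}\binom{k}{j}\, \,:\!\Psi_{\damp,N}^{k-j}\!:\, v_N^{j}, \qquad (v_N,\dt v_N)\big|_{t=0}=(0,0).
\end{align*}
All the roughness is now concentrated in the Wick powers, while $v_N$ will be sought at a strictly positive regularity $\s>0$.

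The probabilistic input, which I would import from Section~\ref{sec-proba}, is that for every $j\in\{1,\dots,k\}$, every $\eps>0$, and every $T>0$, the sequence $\{\,:\!\Psi_{\damp,N}^{j}\!:\,\}_{N\in\N}$ is Cauchy, and thus converges $\Prob$-almost surely, in $C([0,T];W^{-\eps,\infty}(\M))$ to a limit $\,:\!\Psi_{\damp}^{j}\!:$. This is established via Wiener chaos and hypercontractivity, once the covariance of $\Psi_{\damp,N}(t,x)$ and its differences are controlled uniformly in $x\in\M$; such control comes precisely from the Green's function estimates for $(1-\Dlg)^{-1}$ and the eigenfunction bounds stressed in the abstract, which play here the role of the Fourier-side computations used on $\T^2$ in \cite{GKOT}.

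Given this input, I would run a deterministic contraction argument on the Duhamel form of the $v_N$ equation, treating each $\,:\!\Psi_{\damp,N}^{k-j}\!:$ as a fixed element of $C([0,T];W^{-\eps,\infty}(\M))$. Working in $Y_T=C([0,T];H^\s(\M))$ with $\eps\ll\s<1$ chosen so that $H^\s(\M)$ behaves as an algebra (or handled by a paraproduct iteration), the key nonlinear bound is a product estimate of the form
\begin{align*}
\big\|\,:\!\Psi_{\damp,N}^{k-j}\!:\, v_N^{j}\big\|_{H^{-\eps}} \les \big\|\,:\!\Psi_{\damp,N}^{k-j}\!:\big\|_{W^{-\eps,\infty}} \|v_N\|_{H^{\s}}^{j},
\end{align*}
combined with the one-derivative gain of the propagator $V(t)$ coming from the factor $(\tfrac34-\Dlg)^{-1/2}$ in \eqref{def-V} and a factor $T^\alpha$ from time integration. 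This yields a stopping time $T=T(\o)>0$, $\mu\otimes\Prob$-almost surely positive, whose size depends only on the almost sure norms of the stochastic objects and is therefore uniform in $N$. Convergence $u_N\to u$ then follows by estimating the difference $v_N-v_M$ with the same bounds, together with the Cauchy property of the Wick powers, so that $v_N\to v$ in $Y_T$; combined with $\Psi_{\damp,N}\to\Psi_{\damp}$ in $C([0,T];H^s(\M))$, this gives $u_N\to u=\Psi_{\damp}+v$ in $C([0,T];H^s(\M))$.

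The main obstacle, and what distinguishes the argument from \cite{GKOT}, is essentially geometric and stems from the absence of a usable Fourier transform on $(\M,\g)$. First, $\s_N(x)$ genuinely depends on $x$, so the Wick renormalization is pointwise in $x$ and its justification requires sharp on-diagonal Green's function asymptotics uniform in $x\in\M$, together with precise Sobolev/$L^\infty$ control of the eigenfunctions $\varphi_n$. Second, the product estimate above has to be obtained through a Bony-type paraproduct decomposition built from the functional calculus of $-\Dlg$ rather than from a literal Fourier transform. Once these two ingredients, flagged in the abstract as the technical core of the paper, are in place, the functional-analytic scheme above goes through essentially as on $\T^2$.
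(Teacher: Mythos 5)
Your proposal follows essentially the same route as the paper: the Da Prato--Debussche decomposition $u_N=\P_N\Psi_{\damp}+v_N$, the Hermite expansion \eqref{Hsum} to rewrite the nonlinearity with coefficients given by the Wick powers of $\P_N\Psi_{\damp}$, the probabilistic bounds of Section~\ref{sec-proba} (Proposition~\ref{prop-psidamp}, resting on Proposition~\ref{prop-Green}), a contraction for the remainder at regularity $s_1$ close to $1$ via the Besov product rule of Corollary~\ref{cor-besov}, a time of existence depending only on the size of the stochastic data and hence uniform in $N$, and convergence by continuous dependence; this is exactly the scheme of Proposition~\ref{prop-glocal} and Subsection~\ref{subsec-SDNLW+NLW}.

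One correction to the probabilistic input you invoke: you claim that the higher Wick powers $\,:\!\Psi_{\damp,N}^{j}\!:\,$, $j\ge 2$, converge in $C([0,T];W^{-\eps,\infty}(\M))$. The paper deliberately does \emph{not} prove this on a general manifold --- only convergence in $L^q([0,T];W^{-\eps,\infty}(\M))$ for finite $q$ is established (see the footnote to Proposition~\ref{prop-z} and Remark~\ref{rk-FIO}): time-continuity of the higher powers would require controlling $\gamma_N(t+h,t)$, i.e.\ the wave propagator, which is not a $\Psi$DO in the class $\S^0$ and would call for Fourier integral operator techniques. This is not fatal to your argument, since the contraction only uses the perturbative coefficients through H\"older in time (the $\dl^{1/q'}$ factor in Proposition~\ref{prop-glocal}), so you should simply state the input as an $L^q_T W^{-\eps,\infty}$ bound, $q<\infty$, together with the tail estimates, and define the good sets $\Si_M$ accordingly; with that adjustment your proof matches the paper's.
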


\medskip
\begin{remark}\label{REM:X1}\rm
%~\\
\textup{(i)} Formally, the limiting process $u$ is a solution of the full equation:
\begin{align}\label{WSDNLW}
\dt^2 u +(1-\Dlg)u  +\dt u \,+ :\!u^k\!:\, = \sqrt{2}\xi.
\end{align}
 This is only formal since the renormalized nonlinearity \eqref{Wick1} is only defined for smoothed (i.e. frequency truncated) noise and data.

\smallskip

\noi
\textup{(ii)} The limit $u$ in Theorem~\ref{thm-SDNLW} is unique in the class
\begin{align*}
\Psi_{\damp} + C\big([0,T]; H^{s_1}(\M)\big)\subset C\big([0,T];H^{s}(\M)\big)
\end{align*}
for $0<1-s_1\ll 1$.

\smallskip

\noi
\textup{(iii)} The full Wick ordered nonlinearity is actually well defined on the above class (see \eqref{Hsum} below), which justifies that $u$ ``is a solution" of the full renormalized dynamics \eqref{WSDNLW}.
\end{remark}
We now investigate the {\it global} well-posedness of \eqref{WSDNLW} and the invariance of the Gibbs measure \eqref{def-rhok} when $k\ge 3$ is an odd integer. Instead of considering the approximate dynamics given by truncating the noise and the initial data
(as in \eqref{trunc-SDNLW}), we truncate the nonlinearity and look at the following approximate dynamics:
\begin{align}\label{FNLW1}
\begin{cases}
\dt^2 u_N +(1-\Dlg)u_N  + \dt u_N + \P_NH_k\big(\P_N u_N;\s_N(x)\big) = \sqrt{2}\xi,\\
(u_N,\dt u_N)\big|_{t=0}=(u_0,u_1)\sim\rho_{N,k+1},
\end{cases}
\end{align}
where $\rho_{N,k+1}$ is the truncated Gibbs measure, defined in \eqref{rhoN} below. 
Here, the notation $(u_0,u_1)\sim\rho_{N,k+1}$ means that the random initial data $(u_0, u_1)$ has the law $\rho_{N, k+1}$.
Since $\rho_{N,k+1}\ll\mu$, the same local well-posedness and convergence result as in Theorem \ref{thm-SDNLW} also holds for \eqref{FNLW1}, and gives again a\footnote{Actually, a straightforward adaptation of our argument shows that the limits obtained by \eqref{trunc-SDNLW} or by \eqref{FNLW1} are the same. See also Remark~\ref{REM:tronc} below.} local solution $(u,\dt u)$ to \eqref{WSDNLW}. Then we can exploit the invariance of $\rho_{N,k+1}$ under the flow of \eqref{FNLW1} by following Bourgain's argument as in \cite{BO94,BO96,Tz,BT2,BTT2}, and extend the local well-posedness result into a global one.

\begin{theorem}\label{thm-invariance}
 Let  $k \geq 3$ be an odd integer\footnote{Here, we only consider the defocusing case, namely the case of an odd integer $k\in\N$ with the ``+" sign in front of the nonlinear term on the left-hand side of \eqref{SDNLW}, since in the focusing case the density of $``d\widetilde{\rho}_{k+1}(u,\dt u)=e^{+\int_\M\,:\!u^{k+1}\!:\,}d\mu(u,\dt u)"$ cannot be properly defined \cite{BS, OST}. When $k$ is even, there is no notion of focusing or defocusing.
 When $k = 2$, it is still possible to construct 
a focusing Gibbs measure, at least on the flat torus $\T^2$; see  
\cite{BO95, OST}.
This focusing Gibbs measure is, however, endowed 
with a taming by a power of the Wick-ordered $L^2$-norm, leading to a slightly different equation.
Hence, we do not consider it in this paper.
%  and it may be possible for small values of $k$ to construct the corresponding measure with a different renormalization involving other cut-offs, in the spirit of \cite{LRS,BO94}.
 A similar comment applies to Theorem \ref{thm-global}.}
 and $s < 0$.
 Then,  the limit $(u,\dt u)$ of the dynamics \eqref{FNLW1} can be $\mu\otimes\Prob$-almost surely extended globally in time, thus defining a global measurable flow map $\Phi(t) : \H^s(\M)\times \O \to \H^s(\M)$. Moreover,  the Gibbs measure $\rho_{k+1}$ is invariant, in the sense that for any $t\ge 0$ and any $F\in C_b(\H^s(\M);\R)$, we have
 \begin{align*}
 \int_{\H^s(\M)}\int_\O F\big[\Phi(t)(u_0,u_1,\o)\big]d\Prob(\o)d\rho_{k+1}(u_0,u_1) = \int_{\H^s(\M)}F(u_0,u_1)d\rho_{k+1}(u_0,u_1).
 \end{align*}
\end{theorem}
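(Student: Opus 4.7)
The plan is to adapt Bourgain's invariant measure argument following the scheme of \cite{BO94,BO96,Tz,BT2,BTT2}. Three ingredients will be needed. \emph{(i) Local well-posedness for} \eqref{FNLW1}, uniformly in $N$, with an existence time depending only on a suitable norm of the relevant stochastic objects (the truncated stochastic convolution and its Wick powers); since $\rho_{N,k+1}\ll\mu$ with densities uniformly bounded in $L^p(\mu)$ via Nelson's hypercontractive estimate (as discussed earlier in the paper), this reduces to an adaptation of Theorem \ref{thm-SDNLW}. \emph{(ii) Invariance of} $\rho_{N,k+1}$ under the flow $\Phi_N$ of \eqref{FNLW1}. \emph{(iii) Convergence} $\rho_{N,k+1}\to\rho_{k+1}$ in total variation, via $L^p(\mu)$ convergence of densities.

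For (ii), I would split $u_N = \P_N u_N + (1-\P_N)u_N$. The high-frequency component satisfies the linear damped Klein--Gordon equation driven by $(1-\P_N)\xi$, for which the associated Gaussian factor of $\mu$ is invariant by a direct Ornstein--Uhlenbeck computation. The low-frequency component $\P_N u_N$ satisfies a finite-dimensional Langevin-type system with potential proportional to $\int_{\M}:\!(\P_N u)^{k+1}\!:\,dx$, for which invariance of the corresponding finite-dimensional Gibbs measure is a standard Fokker--Planck / It\^o calculation. Combining the two components yields invariance of $\rho_{N,k+1}$ on the full phase space.

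With these ingredients in hand, I would run Bourgain's globalization argument. Fix $T,\eps>0$. Using the uniform $L^p(\mu)$ density bound, choose a ball $B_R\subset\H^s(\M)$ with $\rho_{N,k+1}(B_R^c)<\delta$ uniformly in $N$; local well-posedness gives an associated existence time $\tau=\tau(R)$. Partition $[0,T]$ into $\lceil T/\tau\rceil$ subintervals and enforce $\Phi_N(j\tau)(u_0,u_1,\o)\in B_R$ at each endpoint: invariance of $\rho_{N,k+1}$ together with a union bound controls the bad set by $(T/\tau+1)\delta$, which can be made smaller than $\eps$. Concatenating local solutions gives a solution of \eqref{FNLW1} on $[0,T]$ with $\rho_{N,k+1}\otimes\Prob$-probability at least $1-\eps$. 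Passing $N\to\infty$: the local convergence $\Phi_N\to\Phi$ from an adaptation of Theorem~\ref{thm-SDNLW} to \eqref{FNLW1}, combined with the uniform global bound, yields almost sure global existence of $\Phi$ under $\rho_{k+1}\otimes\Prob$ after letting $\eps\to0$ and $T\to\infty$. Invariance of $\rho_{k+1}$ then follows by passing to the limit in the identity
\[
\int F[\Phi_N(t)(u_0,u_1,\o)]\,d(\rho_{N,k+1}\otimes\Prob) = \int F\,d\rho_{N,k+1}, \qquad F\in C_b(\H^s(\M);\R),
\]
using dominated convergence (via the uniform densities) and the almost sure convergence $\Phi_N(t)\to\Phi(t)$ on the left, and total-variation convergence $\rho_{N,k+1}\to\rho_{k+1}$ on the right.

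The main obstacle will be synchronizing these limiting procedures: Bourgain's iteration must be carried out uniformly in $N$, and the stochastic objects appearing in the local theory must have uniform-in-$N$ moment bounds under $\rho_{N,k+1}\otimes\Prob$ so that the $N$-dependent good sets survive the $N\to\infty$ passage. Handling this uniformity---together with the fact that the stochastic forcing in \eqref{FNLW1} is the full noise $\sqrt{2}\xi$ rather than its frequency truncation, so that only the nonlinearity, not the driver, splits cleanly under $\P_N$---is where the bulk of the technical work will reside.
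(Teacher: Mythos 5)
Your overall scheme---invariance of $\rho_{N,k+1}$ via a low/high-frequency decomposition, a Bourgain-type iteration for growth bounds, and passage to the limit $N\to\infty$ using local stability and convergence of densities---is the paper's. However, you omit a necessary prior step: establishing \emph{global} existence of the truncated flow $\Phi^N$, for each fixed $N$, before the Bourgain machinery is engaged. Your item (ii), the invariance of $\rho_{N,k+1}$, has no content until $\Phi^N(t)$ is known to be an almost surely globally defined map, and your Bourgain iteration invokes precisely that invariance. The paper proves global existence of $\Phi^N$ (Proposition~\ref{prop-FNLW}) by a deterministic energy/Gronwall argument on the equation for $w_N = u_N - \Psi_{\damp}$: the damping supplies the dissipation $-\|\dt w_N\|^2_{L^2}$, the oddness of $k$ makes the potential $\frac{1}{k+1}\int_\M(\P_N w_N)^{k+1}\,dx$ nonnegative, and an integration-by-parts trick in the spirit of \cite{OP} closes the Gronwall estimate. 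Without this input your scheme is, at best, a bootstrap you do not supply; as written, it is circular.

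Beyond this, several of your steps would not survive as written. First, the decomposition must use the \emph{sharp} spectral projector $\Pi_N$ onto $E_N$, not the smooth truncation $\P_N$: since $(1-\Pi_N)\P_N=0$ while $(1-\P_N)\P_N\neq 0$, only $(1-\Pi_N)u_N$ actually solves the linear stochastic damped Klein--Gordon equation; the decomposition of $\rho_{N,k+1}$ into an $E_N\times E_N$ factor and its Gaussian complement likewise requires $\Pi_N$. Second, in the stochastic setting your ball criterion $\Phi_N(j\tau)\in B_R$ does not control the local existence time on $[j\tau,(j+1)\tau]$, which depends on the noise path there and not merely on the data at time $j\tau$; the paper instead works with the measure-preserving map $\wt\Phi^N$ on the \emph{product} space $\H^s(\M)\times\O$ (time-shifting the Brownian path as well) and builds the good set from bounds on both the solution norm and the Wick powers of the stochastic convolution over each subinterval (conditions \eqref{condition1}--\eqref{condition3}). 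Third, the convergence of $\Phi^{N_p}(t)$ uniformly on the Bourgain window $[0,2^j]$ is \emph{not} immediate from the local theory: Proposition~\ref{prop-approx} is a substantive induction over subintervals of length $\delta$, feeding the logarithmic growth bounds of Proposition~\ref{prop-Si} back into the local stability estimate at each step. Your proposal treats this as a one-shot consequence of ``local convergence combined with the uniform global bound,'' which glosses over the iteration entirely.
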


\begin{remark}\label{REM:tronc}\rm
As pointed out above, Theorem~\ref{thm-invariance} is concerned with the invariance of the Gibbs measure $\rho_{k+1}$ in \eqref{def-rhok} for the limit of the truncated equation \eqref{FNLW1}. The reason to consider this dynamics (rather than \eqref{trunc-SDNLW})
is that it also admits an invariant (truncated) Gibbs measure $\rho_{N,k+1}$ (see the definition in \eqref{rhoN} below), which makes it easier to apply Bourgain's invariant measure argument \cite{BO94,BO96} to globalize the dynamics in Section~\ref{sec-global}. However, this approximation is somehow less natural than \eqref{trunc-SDNLW} considered in Theorem~\ref{thm-SDNLW}, as this latter deals with solutions arising from smooth approximations of the initial data and noise instead of truncating the nonlinearity. It turns out that there are situations where the truncated dynamics \eqref{FNLW1} is actually easier to handle than the natural approximation \eqref{trunc-SDNLW}, as one can benefit of the invariance of $\rho_{N,k+1}$ also in the local theory. We refer the reader to the introduction of \cite{STz} for a more thorough discussion on this point. In our case, the local theory and stability property established in Propositions~\ref{prop-glocal} and~\ref{prop-approx} below are robust enough to handle both the truncated dynamics \eqref{FNLW1} and the natural approximation \eqref{trunc-SDNLW}, and the result of Theorem~\ref{thm-invariance} should also hold for the natural approximation \eqref{trunc-SDNLW} up to minor modifications of the argument presented in Section~\ref{sec-global}.
\end{remark}

As mentioned above, we can also look at the evolution of $\rho_{k+1}$ under (a suitably renormalized version of) the deterministic NLW \eqref{NLW} (i.e.~without stochastic forcing or damping).
For this purpose, we first study the following renormalized NLW:
 \begin{align}
\begin{cases}
\dt^2 u_N + (1- \Dlg)  u_N +  H_k\big(u_N;\s_N(x)\big) = 0 \\
(u_N,\dt u_N)\big|_{t=0}=(\P_Nu_0,\P_Nu_1),
\end{cases}
\label{trunc-NLW}
\end{align}
where $(u_0,u_1)$ has the law $\mu$ defined in \eqref{def-mu}. In this case we have similar results.
\begin{theorem}\label{THM:LWP}
Let $k \geq 2$ be an integer and $s<0$. 
Then, there exists a stopping time $T$, $\mu$-almost surely positive, such that for $\mu$-almost every initial data $(u_0,u_1)\in\H^s(\M)$ and for any $N\in \N$, there exists a unique solution $u_N \in C\big([0,T];H^{s}(\M)\big)$ to \eqref{trunc-NLW}.
Moreover,  $\{u_N\}_{N\in\N}$ converges $\mu$-almost surely to a function $u\in C\big([0,T];H^{s}(\M)\big)$.
\end{theorem}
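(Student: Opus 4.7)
My plan is to adapt the Da Prato-Debussche decomposition already employed for Theorem~\ref{thm-SDNLW}, replacing the damped-wave propagator $V(t)$ in \eqref{def-V} by the undamped Klein-Gordon propagators
$$S(t) \deff \cos\big(t\sqrt{1-\Dlg}\big), \qquad \widetilde S(t) \deff \frac{\sin\big(t\sqrt{1-\Dlg}\big)}{\sqrt{1-\Dlg}},$$
and dropping the stochastic convolution term from \eqref{def-Psidamp}. The relevant linear objects are then
$$\Psi_N(t) \deff S(t)\P_N u_0 + \widetilde S(t)\P_N u_1, \qquad \Psi(t) \deff S(t) u_0 + \widetilde S(t) u_1,$$
where $(u_0,u_1)\sim\mu$. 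Writing $u_N = \Psi_N + v_N$ and using the Hermite identity $H_k(x+y;\sigma)=\sum_{j=0}^k\binom{k}{j}H_{k-j}(x;\sigma)y^j$, the truncated renormalized equation \eqref{trunc-NLW} becomes
\begin{align*}
\dt^2 v_N + (1-\Dlg) v_N + \sum_{j=0}^k \binom{k}{j} :\!\Psi_N^{k-j}\!:\, v_N^{j} = 0,
\qquad (v_N,\dt v_N)\big|_{t=0}=0,
\end{align*}
where $:\!\Psi_N^{k-j}\!:\,\deff H_{k-j}(\Psi_N;\s_N)$.

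The first step is probabilistic: I would show that $\Psi_N\to\Psi$ $\mu$-almost surely in $C([0,T];H^s(\M))$ for any $s<0$, and that $:\!\Psi_N^{k-j}\!:$ converges $\mu$-almost surely in $C([0,T];W^{-\eps,p}(\M))$ for every $\eps>0$ and $p<\infty$. For each fixed $t$, the law of $\Psi(t)$ coincides with $\mu$ because the free Klein-Gordon flow preserves the Gaussian covariance structure of $\mu_0\otimes\mu_1$, so in particular $\E[(\P_N \Psi(t,x))^2]=\s_N(x)$ as in \eqref{sN}. Consequently the moment and continuity estimates needed for the Wick powers are essentially those already established for $\Psi_{\damp}$ in Section~\ref{sec-proba}, with minor modifications to handle the deterministic (rather than stochastic) time dependence, and they rely on the same Green's function / eigenfunction bounds of Lemma~\ref{LEM:ev} together with Weyl's law \eqref{Weyl}, followed by hypercontractivity and a Kolmogorov continuity argument.

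The second step is the deterministic fixed-point analysis for $v_N$. I would solve the equation above by a contraction mapping in $C([0,T];H^{s_1}(\M))$ intersected with an appropriate Strichartz-type space, for $s_1<1$ with $1-s_1$ small enough: the Sobolev embedding $H^{s_1}\hookrightarrow L^q$ on the two-dimensional $\M$ holds for $q$ arbitrarily large, so each $v_N^j$ lies in a suitable $L^{q/j}$, and paired with the distribution $:\!\Psi_N^{k-j}\!:\,\in W^{-\eps,p}$ the product falls into a negative Sobolev space of roughness slightly less than one, which is exactly what Duhamel recovers back into $H^{s_1}$. A uniform-in-$N$ contraction on a small ball over a random time interval $[0,T]$ then gives existence, uniqueness within $\Psi_N + C([0,T];H^{s_1}(\M))$, and convergence $v_N\to v$; setting $u\deff\Psi+v$ yields the claimed limit in $C([0,T];H^s(\M))$. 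The main obstacle is precisely this nonlinear estimate on a general compact surface: unlike on $\T^2$, one does not have clean Littlewood-Paley / paraproduct machinery at hand, and the Strichartz estimates for the Klein-Gordon flow on $(\M,\g)$ carry a geometric derivative loss. Checking that this loss remains compatible with taking $s_1$ close enough to $1$ to close the nonlinear fixed point is the key technical point, and requires the manifold versions of Sobolev multiplication estimates together with the best available Strichartz inequalities on $(\M,\g)$.
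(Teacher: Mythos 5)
Your overall plan coincides with the paper's: the Da Prato--Debussche decomposition $u_N = z_N + v_N$ with $z_N = \P_N S(t)(u_0,u_1)$, probabilistic bounds on the Wick powers of the free linear evolution, and a deterministic contraction for the remainder. The specific route differs from the paper in two places, and one of them is a genuine gap.

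The gap is the claim that $:\Psi_N^{k-j}:$ converges $\mu$-almost surely in $C([0,T];W^{-\eps,p}(\M))$ via a Kolmogorov continuity argument. The paper proves this only for the first power (the free field $z$ itself), where eigenfunction orthogonality makes the time-increment computation close; for Wick powers of degree $\ell\geq 2$ the paper explicitly flags that $C([0,T])$ regularity is out of reach by the methods available (see the footnote to Proposition~\ref{prop-z} and Remark~\ref{rk-FIO}). The obstruction is concrete: the time increment of the $\ell$-th power leads to $\gamma(t+h,t,\x_1,\x_2)^\ell$, whose local description involves the half-wave operator $\cos(h\sqrt{1-\Dlg})$, and this does \emph{not} lie in the symbol class $\S^0$ of \eqref{def-symbolclass}, so Proposition~\ref{prop-Green} does not apply; a global eigenfunction argument would in turn need off-diagonal decay for products of $\ell$ eigenfunctions, which is not available on a general $\M$. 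The paper circumvents this by only proving $L^q([0,T];W^{-\eps,\infty})$ convergence for finite $q$ (Proposition~\ref{prop-z}) and arranging the deterministic step (Proposition~\ref{prop-glocal}) so that the singular inputs $f_\ell$ are only required in $L^q_T W^{-\eps/2,\infty}$, not $C_T$. If you insist on pointwise-in-time control of the higher Wick powers you will get stuck.

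The second issue is a misdiagnosis rather than a gap: you locate the main technical difficulty in the Strichartz estimates for the Klein--Gordon flow on $(\M,\g)$ and worry about a geometric derivative loss. Theorem~\ref{THM:LWP} does not use Strichartz at all: because the remainder lives in $H^{s_1}$ with $s_1$ close to $1$, a plain energy estimate $L^1_T H^{s_1-1}\to C_T H^{s_1}$ combined with the Besov product rule of Corollary~\ref{cor-besov}(iii) closes the contraction (Proposition~\ref{prop-glocal}). Strichartz estimates enter only for Theorem~\ref{thm-SNLW}, where the data regularity is at or near $s_{\mathrm{crit}}$, and there Lemma~\ref{lem-strichartz} shows they are lossless on a compact boundaryless surface by finite speed of propagation. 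The real effort in the paper is the probabilistic Green's function bound of Proposition~\ref{prop-Green} and the surrounding semi-classical pseudodifferential analysis of Section~\ref{SEC:PDO}, needed precisely because eigenfunctions on $\M$ are not uniformly bounded (one only has the averaged estimate of Lemma~\ref{LEM:ev}).
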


Here,  the uniqueness of $u$ is in the corresponding class:
\begin{align*}
z+C\big([0,T];H^{s_1}(\M)\big),
\end{align*}
where $s_1$ is as in Remark \ref{REM:X1}\,(ii) and
\begin{align}\label{zN}
z(t) = S(t)(u_0,u_1)=\cos(t\sqrt{1-\Dlg})u_0 + \frac{\sin(t\sqrt{1-\Dlg})}{\sqrt{1-\Dlg}}u_1
\end{align}
is the linear solution with the random initial data $(u_0,u_1) = (u_0^{\o_0},u_1^{\o_1})$ defined in \eqref{def-mu}. Note that we have $\E\big[(\P_Nz(t,x))^2\big]=\s_N(x)$ as in \eqref{sN}, 
and hence the renormalization in \eqref{trunc-NLW} is also defined by \eqref{Wick1}. 

As before, we can alternatively look at the approximations given by solving the truncated NLW:
 \begin{align}
\begin{cases}
\dt^2 u_N + (1- \Dlg)  u_N +  \P_NH_k\big(\P_Nu_N;\s_N(x)\big) = 0 \\
(u_N,\dt u_N)\big|_{t=0}=(u_0,u_1)\sim\rho_{N,k+1}.
\end{cases}
\label{trunc-NLW2}
\end{align}
 Due to the conservation of the energy and subsequently of the truncated Gibbs measure, we also have a global statement for the limit of the solutions to \eqref{trunc-NLW2}.
  
\begin{theorem}\label{thm-global}
Let $k \geq 3$ be an odd integer. Then,  there exists a set $\Si$ of full $\rho_{k+1}$-measure such that for any initial data $(u_0,u_1)\in\Si$, the limit $(u,\dt u)$ of the solutions $(u_N,\dt u_N)$ to~\eqref{trunc-NLW2} %can be extended 
exists
globally in time. Moreover,  the flow map $\Phi(t) : (u_0,u_1)\mapsto (u,\dt u)$ leaves 
the Gibbs measure $\rho_{k+1}$ invariant.
Namely,  for any $t\in\R$ and any $F\in C_b(\H^s(\M);\R)$, we have 
\begin{align*}
\int_{\H^s(\M)}F\big(\Phi(t)(u_0,u_1)\big)d\rho_{k+1}(u_0,u_1) = \int_{\H^s(\M)}F(u_0,u_1)d\rho_{k+1}(u_0,u_1).
\end{align*} 
\end{theorem}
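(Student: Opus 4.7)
The plan is to apply Bourgain's invariant measure argument \cite{BO94,BO96}, in the spirit of \cite{Tz,BT2,BTT2,OT2}, combining the Hamiltonian structure of \eqref{trunc-NLW2} with the local theory already used for Theorem \ref{THM:LWP}. The starting point is that \eqref{trunc-NLW2} is a Hamiltonian system: decomposing $u = \P_N u + (I-\P_N)u$, the high-frequency component evolves by the free linear Klein--Gordon equation (which preserves $\mu$), while the low-frequency component solves a finite-dimensional Hamiltonian ODE with Hamiltonian
\begin{align*}
\En_N(u,v) = \frac12 \int_\M \bigl(v^2 + |\nb u|^2 + u^2\bigr) dx + \frac{1}{k+1} \int_\M H_{k+1}\bigl(\P_N u; \s_N(x)\bigr) dx.
\end{align*}
Liouville's theorem on the finite-dimensional factor, combined with the Gaussian invariance on the complementary subspace, then shows that the truncated Gibbs measure $\rho_{N,k+1}$ is invariant under the flow $\Phi_N(t)$ of \eqref{trunc-NLW2}.

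The second step is a quantitative local well-posedness for \eqref{trunc-NLW2} with $N$-uniform control. Since $d\rho_{N,k+1}/d\mu$ is bounded in $L^p(\mu)$ uniformly in $N$ for every $p<\infty$ (by Nelson's estimate applied to $\int_\M \,:\!(\P_N u)^{k+1}\!:\, dx$), the same scheme as in Theorem \ref{THM:LWP} applies and yields a local existence time of the form $T \ge c\, A(u_0,u_1)^{-\ta}$ for some $\ta>0$, where $A$ is a polynomial expression in the norms of the Wick powers $\,:\!z^j\!:\,$ of the random linear evolution $z$ from \eqref{zN}, built as in Section \ref{sec-proba}. Crucially, both the probabilistic tail bounds of the form $\mu(A>\Lambda) \le Ce^{-c\Lambda^{2/k}}$ and the density bounds on $d\rho_{N,k+1}/d\mu$ transfer \emph{uniformly in} $N$, so that $\rho_{N,k+1}(A>\Lambda) \le Ce^{-c\Lambda^{2/k}}$ with constants independent of $N$.

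Bourgain's globalization then proceeds as follows. Given $T\gg 1$ and $\Lambda\gg 1$, partition $[-T,T]$ into $\sim T\Lambda^{\ta}$ subintervals of length $\Lambda^{-\ta}$. Using the invariance of $\rho_{N,k+1}$ under $\Phi_N$, the smallness of the bad set on each subinterval propagates to
\begin{align*}
\rho_{N,k+1}\bigl(\{(u_0,u_1) : A(\Phi_N(t)(u_0,u_1)) > \Lambda \textup{ for some } |t|\le T\}\bigr) \le C\,T\,\Lambda^{\ta}\,e^{-c\Lambda^{2/k}},
\end{align*}
which is summable along a geometric sequence $\Lambda_j\to\infty$. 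A Borel--Cantelli argument, followed by a diagonal extraction over $T\to\infty$, produces a set $\Si_N$ of full $\rho_{N,k+1}$-measure on which $\Phi_N$ is global with quantitative growth bounds. Passing to the limit $N\to\infty$, using the total variation convergence $\rho_{N,k+1}\to\rho_{k+1}$ together with the pathwise convergence $u_N\to u$ on every fixed time interval from Theorem \ref{THM:LWP}, one extracts a set $\Si$ of full $\rho_{k+1}$-measure on which the limiting flow $\Phi(t)$ is defined for all $t\in\R$. The invariance of $\rho_{k+1}$ then follows by passing to the limit in the identity $\int F\circ \Phi_N(t)\,d\rho_{N,k+1} = \int F\,d\rho_{N,k+1}$ via dominated convergence and continuity of $F$.

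The main obstacle is to secure all these $N$-uniform ingredients on a general compact surface rather than on $\T^2$: both the probabilistic construction and convergence of the renormalized stochastic terms from Section \ref{sec-proba} and the $L^p(\mu)$ bounds on the Gibbs density (via Nelson's estimate) hinge on the Green's function estimate for $1-\Dlg$, which produces the bound $\s_N(x)=O(\log N)$ in \eqref{sN} and is the technical heart of the paper. Once these are established, the remaining steps are a careful adaptation of the $\T^2$ arguments of \cite{OT2} to $(\M,\g)$.
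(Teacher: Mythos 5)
Your proposal follows essentially the same route as the paper: decompose into a finite-dimensional Hamiltonian dynamics preserving the truncated Gibbs measure (Liouville plus Gaussian invariance on the orthogonal complement), establish $N$-uniform local well-posedness with tail bounds that transfer to $\rho_{N,k+1}$ via the $L^p(\mu)$-bounds on the density (the paper's Lemma~\ref{lem-Gibbs}), run Bourgain's time-chopping argument, and pass to the limit $N\to\infty$. The paper proves this in full for the SDNLW case in Section~\ref{sec-global} and observes that the deterministic NLW case is identical.

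There is one real gap in your sketch. You justify the passage to the limit by invoking ``the pathwise convergence $u_N\to u$ on every fixed time interval from Theorem~\ref{THM:LWP},'' but Theorem~\ref{THM:LWP} only provides convergence up to the local (random) existence time $T$, which cannot a priori be taken to be a fixed macroscopic length. Promoting the local convergence to convergence on $[0,2^j]$ is the content of the paper's Proposition~\ref{prop-approx}, and it is not automatic: one must run an induction over the $\sim 2^j/\delta$ subintervals of length $\delta$, controlling at each step the discrepancy between the flows $\Phi^N$ and $\Phi^M$ initiated from the (slightly different) data produced at the previous step. This requires (i) that the initial data at each subinterval boundary remain in the controlled sets $\B_N^{m,j}$ -- which is exactly what the invariance of $\rho_{N,k+1}$ and the sets $\Si_N^{m,j}$ in Proposition~\ref{prop-Si} guarantee -- and (ii) a convergence estimate in a topology stronger than that of the invariant measure, namely $\H^{s_2}$ with $s_2>0$ rather than $\H^s$ with $s<0$, so the smallness survives being fed back as initial data for the next subinterval. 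The paper specifically notes that, unlike \cite{BTT2}, it establishes convergence of $\Phi^{N_p}$ rather than $\P_{N_p}\Phi^{N_p}$ precisely to cancel the linear contribution and gain this stronger topology. Without this iterative step, the claim that ``one extracts a set $\Si$ of full $\rho_{k+1}$-measure on which the limiting flow is defined for all $t\in\R$'' is not justified. The remainder of your outline -- the invariance of the truncated measures, the $N$-uniform tail bounds from Section~\ref{sec-proba}, the Green's function estimate as the technical heart, and the final passage to the limit in the invariance identity by dominated convergence -- is consistent with the paper.
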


\medskip
\begin{remark}\rm
The same comment as in Remark~\ref{REM:tronc} above also applies to the result stated in Theorem~\ref{thm-global}. In particular, for the deterministic equation \eqref{NLW}, the approximation by smooth initial data in \eqref{trunc-NLW} 
(while studying the same equation) is genuinely more natural than the one given by the truncated equation \eqref{trunc-NLW2}, since in this case the use of randomness on initial data can be interpreted as a way to give a meaning to limits of smooth solutions to \eqref{NLW} at a super-critical regularity.
See, for example, \cite{BT1, BT3, OPT}.
\end{remark}

Finally,  we consider the case with stochastic forcing but with deterministic initial and no damping\footnote{Let us recall that the damping term was added in \eqref{SDNLW} in order to preserve the measure $\rho_{k+1}$. Hence when there is no damping term as in \eqref{SNLW} there is no point in considering random initial data, since there is no invariant measure for \eqref{SNLW}.}:
\begin{align}
\begin{cases}
\dt^2 u_N +(1-  \Dlg)  u_N + H_k \big(u_N;\s_N(t,x)\big) = \P_N\xi \\
(u, \dt u) |_{t = 0} = (u_0, u_1), 
\end{cases}
\qquad (t,x) \in \R \times \M
\label{trunc-SNLW}
\end{align}
for deterministic initial data in $(u_0,u_1)\in\H^s(\M)$, where $\s_N(t,x)$ is as in \eqref{def-sigmat} below. Here, the renormalization is slightly different. 
Let us first define the stochastic convolution
\begin{align*}
\Psi(t)\deff\int_0^t\frac{\sin\big((t-t')\sqrt{1-\Dlg}\big)}{\sqrt{1-\Dlg}}dB(t') = \sum_{n\geq 0}\bigg(\int_0^t\frac{\sin\big((t-t')\jb{\lambda_n}\big)}{\jb{\lambda_n}}d\beta_n(t')\bigg)\varphi_n,
\end{align*}
which is the solution of the linear stochastic wave equation with the zero initial data. Then, from It\^o's isometry,  we have for any $x\in\M$ and $t\geq 0$:
\begin{align}\label{def-sigmat}
\begin{split}
\s_N(t,x) 
& \overset{\text{def}}{=} \E\big[\P_N\Psi(t,x)^2\big]=\sum_{n\geq 0}\psi_0(N^{-2}\lambda_n^2)(\varphi_n(x))^2\int_0^t \bigg[\frac{\sin\big((t-t')\jb{\lambda_n}\big)}{\jb{\lambda_n}}\bigg]^2dt'\\
& \hspace{0.45mm}
=\sum_{n\geq 0}\psi_0(N^{-2}\lambda_n^2)(\varphi_n(x))^2\bigg(\frac{t}{2\jb{\lambda_n}^2}-\frac{\sin(2t\jb{\lambda_n})}{4\jb{\lambda_n}^3}\bigg)
=O( t\log N).
\end{split}
\end{align}

\noi
As in \eqref{Wick1}, we thus define the renormalized Wick powers by
\begin{align}\label{def-WickPsi}
:(\P_N\Psi(t,x))^k\!:\, \overset{\text{def}}{=} H_k\big(\P_N\Psi(t,x);\s_N(t,x)\big).
\end{align}
We emphasize here that since now $\P_N\Psi$ is not stationary in $x$ or $t$, the renormalization needs to be performed pointwise in both $x$ and $t$.
\begin{theorem}\label{thm-SNLW}
Given an integer $k \geq 2$,  let $s_\textup{crit}$ be the critical regularity defined in \eqref{scrit} below. Let $0<s_1<1$ satisfying $s_1>s_\textup{crit}$ if $k=2,3$ or $s_1\geq s_\textup{crit}$ when $k\geq 4$. Then, the truncated Wick ordered SNLW \eqref{trunc-SNLW} is almost surely locally well-posed in $\H^{s_1}(\M)$, in the sense that for any data $(u_0,u_1)\in\H^{s_1}(\M)$ and any $s<0$, 
% and almost every $\o$ 
there exists an almost surely positive  stopping time $T=T_\o(u_0,u_1)$ such that for any $N\in\N$, there is a unique solution $u_N$
to \eqref{trunc-SNLW}
in the class
\begin{align*}
\P_N\Psi + X^{s_1}_T\subset C([0, T];H^{s}(\M)),
\end{align*}
where the space $X^{s_1}_T$ is defined in \eqref{def-X} below. 
Moreover, the solutions $u_N$ converge to a stochastic process $u\in C\big([0,T];H^s(\M)\big)$ almost surely.
\end{theorem}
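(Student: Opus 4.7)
The plan is a standard first-order expansion around the rough stochastic object $\P_N\Psi$, turning the problem into a deterministic fixed-point problem for the remainder, with all of the low regularity concentrated in explicit random terms whose regularity properties are extracted by probabilistic arguments. More precisely, for each $N$ I would set $v_N = u_N - \P_N\Psi$ and observe that, since $\P_N\Psi$ solves the linear damped-free equation $(\partial_t^2+1-\Dlg)\P_N\Psi=\P_N\xi$ with zero initial data, the remainder $v_N$ satisfies the random PDE
\begin{align*}
\dt^2 v_N + (1-\Dlg) v_N + \sum_{j=0}^{k}\binom{k}{j}\,:\!(\P_N\Psi)^{k-j}\!:\,\,v_N^{\,j} = 0,
\qquad (v_N,\dt v_N)\big|_{t=0}=(u_0,u_1),
\end{align*}
where I have used the standard binomial identity for Wick powers together with \eqref{def-WickPsi} and the fact that $\s_N(0,x)=0$ from \eqref{def-sigmat}. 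Thus the whole problem reduces to constructing $v_N$ in the smooth space $X^{s_1}(T)$ and then defining $u_N=\P_N\Psi+v_N$ in $\P_N\Psi+X^{s_1}(T)$.

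The probabilistic input I would invoke (as in Section~\ref{sec-proba} and in the analogous arguments on $\T^2$ from \cite{GKO}) is that for each $j=0,\dots,k-1$, the truncated Wick power $\,:\!(\P_N\Psi)^{k-j}\!:\,$ converges almost surely in $C([0,T];W^{-\eps,\infty}(\M))$ (or in a suitable space-time negative-regularity space) to a limit $\,:\!\Psi^{k-j}\!:\,$, with uniform in $N$ bounds, and that the differences $\,:\!(\P_N\Psi)^{k-j}\!:\,-\,:\!(\P_M\Psi)^{k-j}\!:\,$ tend to zero in the same space. The Green's function estimates for $(1-\Dlg)^{-1}$ on $\M$ advertised in the abstract are precisely what turn this statement from a torus computation into one valid on a general compact surface, via the pointwise control of $\varphi_n$ and the covariance $\E[\P_N\Psi(t,x)\P_N\Psi(t,y)]$; the hypercontractivity of Gaussian chaoses then upgrades $L^p_\omega$ bounds to almost-sure convergence.

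For the deterministic fixed-point I would run a standard Picard iteration in $X^{s_1}(T)$ on the Duhamel formulation
\begin{align*}
v_N(t)=S(t)(u_0,u_1)-\sum_{j=0}^{k}\binom{k}{j}\int_0^t \frac{\sin((t-t')\sqrt{1-\Dlg})}{\sqrt{1-\Dlg}}\Big(\,:\!(\P_N\Psi)^{k-j}\!:\,\,v_N^{\,j}\Big)(t')\,dt'.
\end{align*}
The space $X^{s_1}(T)$ is designed to combine the Strichartz-type norms available for the Klein–Gordon propagator on a two-dimensional compact surface (with their manifold-induced derivative loss) with a Sobolev component at regularity $s_1$. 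The required nonlinear estimate, of the form
\begin{align*}
\Big\|\int_0^t V_0(t-t')\big(\,:\!(\P_N\Psi)^{k-j}\!:\,\,v^{\,j}\big)(t')dt'\Big\|_{X^{s_1}(T)}\les T^{\theta}\,\|v\|_{X^{s_1}(T)}^{\,j}
\end{align*}
uniformly in $N$, is where the threshold $s_1>\scrit$ (and $s_1\geq \scrit$ for $k\geq 4$) enters: it is precisely the scaling-critical regularity that makes the Hölder/Sobolev product estimate $\|fg\|_{H^{s_1-1+}}\les \|f\|_{W^{-\eps,\infty}}\|g\|_{H^{s_1}}^{j}$ close. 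Standard contraction and continuity-in-data then yield existence, uniqueness, and stability in $\P_N\Psi+X^{s_1}(T)$, and the contraction radius depends only on the norms of the Wick powers, which are uniformly controlled in $N$.

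The main obstacle I expect is the endpoint nonlinear estimate at $s_1=\scrit$ for $k\geq 4$, where one cannot spend a small Sobolev index to absorb the time factor $T^\theta$ and the $L^\infty_x$ loss associated with the known lossy Strichartz estimates on a general compact surface (as opposed to $\T^2$, where Bourgain's $X^{s,b}$ machinery and Zygmund-type bilinear estimates give sharper products). One has to design $X^{s_1}(T)$ carefully — presumably via a Besov-based Strichartz norm together with the wave-Sobolev norms of Burq–Gérard–Tzvetkov — so that the product $\,:\!(\P_N\Psi)^{k-j}\!:\,\cdot v^{\,j}$ is mapped back into it with small operator norm at small $T$. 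Once this core deterministic estimate is secured uniformly in $N$, the convergence $u_N\to u$ in $C([0,T];H^s(\M))$ follows by applying the same contraction estimate to $v_N-v_M$ and using the Cauchyness of $\{\,:\!(\P_N\Psi)^{k-j}\!:\,\}_N$, which completes the argument.
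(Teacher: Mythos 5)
Your decomposition $u_N=\P_N\Psi+v_N$, the binomial expansion of the Wick power via \eqref{Hsum}, the probabilistic input (uniform-in-$N$ bounds and Cauchyness of $H_\ell\big(\P_N\Psi;\s_N(t,x)\big)$ in negative-regularity spaces, i.e.\ Proposition~\ref{prop-psi}), and the Strichartz-based contraction in $X^{s_1}_T$ with the admissible pairs of Lemma~\ref{lem-pair} are exactly the route the paper takes (Proposition~\ref{prop-subcrit}); the convergence $u_N\to u$ via the difference estimate and the Cauchy property of the Wick powers is also the paper's argument. Two caveats. First, a minor one: on a general surface the Wick powers for $\ell\ge 2$ are only controlled in $L^q\big([0,T];W^{-\eps,\infty}(\M)\big)$ for \emph{finite} $q$ (continuity in time is not established, cf.\ Remark~\ref{rk-FIO}); your parenthetical hedge covers this, and finite $q$ indeed suffices for the fixed point.

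Second, and this is the genuine gap: the theorem includes the endpoint $s_1=\scrit$ for $k\ge 4$, and there your scheme as written does not close — you correctly observe that the pure power term gives $T^{\frac1{\widetilde q}-\frac k q}=T^0$ at critical regularity, but you leave the fix at the level of ``design $X^{s_1}(T)$ carefully.'' The paper's resolution (following \cite{GKO}) is not a redesign of $X^{s_1}_T$ but a change of the norm in which the contraction is run: one introduces the weaker quantity $\|u\|_{Y^{s_1}_T}=\max\big(\|u\|_{L^q_TL^r},\|u\|_{L^q_TL^r}^{1-\eps/s}\|u\|_{L^\infty_TH^{s_1}}^{\eps/s}\big)$, which tends to $0$ as $T\to 0$ even at critical regularity; smallness of $\|S(t)(u_0,u_1)\|_{Y^{s_1}_T}$ and of the stochastic terms then replaces the missing factor $T^\theta$, the contraction is performed on a small ball of $Y^{s_1}_T$, and the full $X^{s_1}_T$ bound for $w$ is recovered a posteriori by re-running the estimate \eqref{fixpt-SNLW} with the interpolation inequality $\|u\|_{L^{q_1}_TW^{\eps,r_1}}\les\|u\|_{Y^{s_1}_T}$. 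Without this (or an equivalent) step, your proof only covers the subcritical range $s_1>\scrit$.
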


The critical exponent $\scrit$ corresponds to the one given by the deterministic well-posedness theory:
\begin{align}
\scrit=\max(\sscal,\sconf,0)=\max\bigg(1-\frac2{k-1},\frac34-\frac1{k-1},0\bigg)
\label{scrit}
\end{align}
where $\sscal$ and $\sconf$ correspond respectively to the scaling invariance and the conformal symmetry.

Unlike in the previous models, there is no invariant Gibbs measure available for \eqref{SNLW}, and as a consequence globalizing the solutions is not as straightforward. We point out that in the special case $\M=\T^2$, this has been investigated very recently in \cite{GKOT}.

\subsection{Scheme of proofs and organization of the paper}
As transpired in the discussion above, the general strategy used in \cite{DPD} (see also \cite{McKean,BO96}) to prove Theorems~\ref{thm-SDNLW},~\ref{THM:LWP}, and~\ref{thm-SNLW} is to look for a solution under the form $u_N=r_N+w_N$ with $r_N\in\{\P_N\Psi_{\damp},z_N,\P_N\Psi\}$, where $w_N$ is expected to be smoother and hence falling into the scope of applicability of the deterministic well-posedness theory. Then, we aim to solve the perturbed equation for $w_N$ with the enhanced data set $\{w_N(0),\dt w_N(0),r_N,...,\,:\!r_N^k\!:\,\}$. Indeed, in view of the formula~\eqref{Hsum} below for the renormalization of the sum, we see that $w_N$ solves (in the case of \eqref{SDNLW})
\begin{align}\label{eq-w-r}
\dt^2w_N +(1-\Dlg)w_N +\dt w_N + \sum_{\ell=0}^k{\binom k \ell}\,:\!r_N^{\ell}\!:\,w_N^{k-\ell} = 0.
\end{align} 
Hence it is enough to estimate the Wick ordered monomials $:r_N^{\ell}:$ uniformly in $N$ in order to estimate $:\!u_N^k\!:$. Then,  we can solve the equation for $w_N$ uniformly in $N$ by a 
standard fixed point argument as in the deterministic setting. The difficulty with working on a general compact Riemannian manifold without boundary appears in the first step when trying to get good probabilistic estimates on the random objects appearing after renormalization. Indeed, the Fourier analytic proofs of these estimates in the previous works on $\T^2$ \cite{OT1,OT2,GKO} fail here because of the lack of structure of a commutative group and of uniform boundedness of the eigenfunctions. Thus we cannot rely only on \lq\lq global \rq\rq (on $\M$) arguments. Instead, we give a local description of the stochastic objects in the spirit of \cite{BGT}, so that up to localizing and controlling various error terms which appear in this process, the probabilistic estimates in the case of a manifold follow from analyzing the kernel of some pseudo-differential operators ($\Psi$DOs) in $\R^2$. Note that the semi-classical analysis
that we employ  is somehow non-standard, since not all the pseudo-differential operators involved depend on the semi-classical parameter, so we have to work with \lq\lq semi\rq\rq\,semi-classical $\Psi$DOs.

Alternatively, in the context of parabolic singular stochastic PDEs, the authors in \cite{BB} developed a functional calculus adapted to the heat semi-group on manifolds, which enabled them to build a robust and general theory for the study of singular stochastic PDEs in a more complex geometrical setting. Though we believe that their approach could be adapted to treat our problem, it seems that the general bound on the powers of the truncated Green function for the Laplace-Beltrami operator established in Proposition~\ref{prop-Green}, which is in the core of our proof, is new and of independent interest. In particular, it would prove itself useful if one wishes to extend the result of \cite{DPD} for \eqref{SQE} on compact surfaces. See also Remark~\ref{rk-SQE} below.

Another contribution of this work is to extend Bourgain's invariant measure argument \cite{BO94,BO96} to the case of a singular stochastic PDE, allowing us to globalize the local result of Theorem~\ref{thm-SDNLW}. This argument has indeed previously been used mainly in the context of a deterministic Hamiltonian PDE with random initial data such as \eqref{NLW} considered here.
In Section~\ref{sec-global},  we carefully detail its implementation in the presence of a singular random forcing term.

We begin by recalling the tools that we need from spectral theory and semi-classical calculus in Section~\ref{SEC:PDO}, in particular the local description of semi-classical pseudodifferential operators given in \cite{BGT} that we shall use extensively. In Section~\ref{sec-proba}, after recalling the basic tools from probability theory and Euclidean quantum field theory, 
 we establish the crucial probabilistic estimates on the aforementioned stochastic objects. 
 Sections~\ref{sec-local} and~\ref{sec-global} are dedicated to the proof of the local and global well-posedness results and the invariance property of the Gibbs measure $\rho_{k+1}$.

\section{Functional calculus and semi-classical pseudo-differential calculus}\label{SEC:PDO}
In this section, we collect the tools from micro-local analysis that we will need in the  next sections. Most of the background needed here can be found in \cite{Zworski}, except for the few results on the functional calculus which can be found in \cite{Davies}.

\subsection{Geometric setting}
We begin by recalling the general setting for our results. We consider a $d$-dimensional\footnote{ In this section we state some results for a general dimension $d\in\N$, but in the rest of the paper we only consider $d=2$.} compact Riemannian manifold without boundary $(\M,\g)$, on which we fix a \emph{finite} atlas $(U_j,V_j,\kk_j)_{j\in\J}$ for some finite index set $\J$, i.e. the $V_j$ are open sets covering $\M$:
\begin{align*}
\M=\bigcup_{j\in\J}V_j,
\end{align*}
and $U_j$ are open sets in $\R^d$, with\footnote{ In the differential geometry literature, atlases are generally defined with the opposite convention that $U\subset \M$ and $\kk: U\rightarrow \kk(U)\subset \R^d$. Here we chose to keep the convention of \cite{BGT}.} some homeomorphisms $\kk_j: U_j\subset \R^d\rightarrow V_j\subset\M$ such that $\kk_j^{-1}\circ \kk_k$ are smooth diffeomorphisms on $U_j\cap U_k$, for any $j,k\in\J$ such that $U_j\cap U_k\neq\emptyset$. We also fix an associated smooth partition of unity $(\chi_j)_{j\in\J}$, i.e. $\chi_j\in C^{\infty}(M)$ with $\supp\chi_j\subset V_j$ and for any $x\in\M$,
\begin{align*}
\sum_{j\in\J}\chi_j(x) = 1.
\end{align*}
 For $j\in \J$ and a smooth function $u\in C^{\infty}(V_j)$, the pull-back of $u$ is then the function $\kk_j^{\star}u = u\circ \kk_j \in C^{\infty}(U_j)$. 
 
Given a local chart $(U_j,V_j,\kk_j)$, the metric $\g$ is given by a smooth mapping $\g: x\in U_j\mapsto \big(\g_{m,\ell}(x)\big)_{m,\ell=1...d}$ where $\big(\g_{m,\ell}(x)\big)_{m,\ell=1...d}$ is a symmetric positive definite matrix, with inverse denoted by $\big(\g^{m,\ell}(x)\big)_{m,\ell=1...d}$.
 
The Laplace-Beltrami operator can then be described as the negative\footnote{ Again, it is common to define the Laplace-Beltrami operator as the positive operator $-\Dlg$, but we stick to the negative one so that the wave equations \eqref{SDNLW}-\eqref{NLW}-\eqref{SNLW} have the same formulation as on $\T^2$.} operator acting locally on smooth functions $u\in C^{\infty}(V_j)$ by
\begin{align*}
\kk_j^{\star}\chi_j(\Dlg u)(x) &= \sum_{m,\ell=1}^d\frac{1}{\sqrt{\det\g(x)}}\partial_{x_m}\big(\sqrt{\det\g(x)}\g^{m,\ell}(x)\partial_{x_\ell}\big)\kk_j^{\star}(\wt\chi_j u)\\
&=\big(p_{2}(x,D)+p_{1}(x,D)\big)\kk_j^{\star}(\wt\chi_j u),
\end{align*}
for any $x\in U_j$, where $\wt\chi_j \in C^{\infty}_0(V_j)$ satisfies $\wt\chi_j\equiv 1$ on $\supp\chi_j$. Here $p_1$ is a differential operator of order 1, and the differential operator $p_{2}$ is given by
\begin{align}\label{def-psDelta}
p_{2}(x,D)=\sum_{m,\ell=1}^d\g^{m,\ell}(x)\partial_{x_m}\partial_{x_\ell}.
\end{align}
In particular, since $\g$ is smooth with values in symmetric positive definite matrices and $\M$ is compact, there exists $c,C>0$ such that for any $x\in \bigcup_{j\in\J}\supp\kk_j^\star \chi_j$ and $\xi\in\R^d$ we have
\begin{align}\label{bound-p}
-C|\xi|^2\leq p_{2}(x,\xi)\leq -c|\xi|^2.
\end{align}
We recall that $-\Dlg$ admits an orthonormal basis $\{\varphi_n\}_{n\geq 0}\subset C^{\infty}(\M)$ of $L^2(\M)$ consisting of eigenfunctions with corresponding eigenvalues $\{\lambda_n^2\}_{n\geq 0}$ assumed to be arranged in the non-decreasing order, and that we have Weyl's law
\begin{align}\label{Weyl}
\#\{n\ge 0,~\lambda_n \leq \lambda\} \sim \lambda^d,
\end{align} 
for any $\lambda\geq 0$. In particular we have $\lambda_n \sim n^\frac1d$.

The eigenfunctions $\varphi_n$'s are not uniformly bounded (in $n$), but we have (see e.g. \cite[Proposition 8.3]{BTT1}) that they are bounded in a mean value meaning:
\begin{lemma}\label{LEM:ev}
Let $d=2$. There exists $C>0$ such that for any $\Ld\in\R$ and $x\in\M$, we have
\begin{align*}
\sum_{n\geq 0}\mathbf{1}_{(\Ld,\Ld+1]}(\lambda_n)\frac{(\varphi_n(x))^2}{1+\lambda_n^2}\leq C\sum_{n\geq 0}\mathbf{1}_{(\Ld,\Ld+1]}(\lambda_n)\frac{1}{1+\lambda_n^2},
\end{align*}
where $\mathbf{1}_{(\Ld,\Ld+1]}$ is the indicator function of the interval $(\Ld,\Ld+1]$.
\end{lemma}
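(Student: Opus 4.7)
The plan is to reduce the stated inequality to a pointwise comparison between the diagonal of the spectral projector onto a unit window and the multiplicity count of eigenvalues in that window, and then to invoke the sharp local Weyl law of H\"ormander together with Weyl's counting asymptotic \eqref{Weyl}.

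As a first step, I would exploit that for every $\lambda_n \in (\Ld,\Ld+1]$ one has $1+\lambda_n^2 \sim 1+\Ld^2$ uniformly in $\Ld$, so this common weight factors out of both sides of the inequality. In particular, when no eigenvalue lies in the window both sums vanish and there is nothing to prove; otherwise, it suffices to establish the uniform pointwise estimate
\[
\sum_{\lambda_n \in (\Ld,\Ld+1]} \varphi_n(x)^2 \; \leq \; C \cdot \#\{n : \lambda_n \in (\Ld,\Ld+1]\},
\]
uniformly in $x \in \M$ and in $\Ld \geq 0$.

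Second, I would control the left-hand side by the sharp pointwise H\"ormander local Weyl law for the spectral function $e(x,x;\Ld) := \sum_{\lambda_n \leq \Ld} \varphi_n(x)^2 = c\, \Ld^2 + O(\Ld)$, in which the remainder is uniform in $x \in \M$. Taking differences at $\Ld+1$ and $\Ld$ yields the pointwise upper bound $\sum_{\lambda_n \in (\Ld,\Ld+1]} \varphi_n(x)^2 = O(\Ld)$, which in dimension two is of the same order as the average value of the right-hand counting given by Weyl's law \eqref{Weyl}.

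The main obstacle is closing the pointwise-in-$\Ld$ discrepancy between this upper bound and the right-hand counting, which could a priori be much smaller than its Weyl average whenever spectral gaps are present. I would resolve this as in \cite[Proposition 8.3]{BTT1}, by working at the level of a smoothed spectral cut-off $\chi(\Ld^{-1}\sqrt{-\Dlg})$ rather than of the sharp indicator: the local semi-classical pseudo-differential description recalled in Section~\ref{SEC:PDO} identifies the diagonal kernel $\chi(\Ld^{-1}\sqrt{-\Dlg})^2(x,x)$ with its trace up to a multiplicative constant uniform in $x$ and $\Ld$, thereby supplying the required matching lower bound on the right-hand side and hence the desired constant $C$.
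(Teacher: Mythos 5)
Your first two steps are exactly the ingredients the paper itself points to: up to the observation that $1+\lambda_n^2\sim 1+\Ld^2$ on the window, the paper's justification consists of differencing H\"ormander's asymptotics $e(x,\Ld^2)=c_d\Ld^d+O(\Ld^{d-1})$, and for the actual window-versus-window comparison it defers to \cite[Proposition 8.3]{BTT1}. The genuine gap in your proposal is the third step, i.e.\ precisely the part that the one-line citation hides. After your reduction, what must be shown is that whenever $\sum_{\lambda_n\in(\Ld,\Ld+1]}\varphi_n(x)^2$ is large (of size $\sim\Ld$) at some point $x$, the count $\#\{n:\lambda_n\in(\Ld,\Ld+1]\}$ is proportionally large. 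Your plan instead tries to bound the two sides separately against $\Ld$, and the needed lower bound $\#\{n:\lambda_n\in(\Ld,\Ld+1]\}\gtrsim\Ld$ is simply false on a general surface: on the round sphere the $\lambda_n$ cluster near $k+\tfrac12$, so most unit windows contain no eigenvalue at all (the lemma survives there only because the left-hand side vanishes simultaneously). So any argument that minorizes the right-hand side independently of the left-hand side cannot close; the two window quantities have to be compared to each other.

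The proposed smoothing does not supply this. The cut-off $\chi(\Ld^{-1}\sqrt{-\Dlg})$ localizes to frequencies $\les\Ld$, i.e.\ to a spectral window of width $\sim\Ld$, and the statement that its squared diagonal kernel is comparable to its trace uniformly in $x$ amounts to $\sum_{n}\chi^2(\lambda_n/\Ld)\varphi_n(x)^2\sim\Ld^2\sim\sum_n\chi^2(\lambda_n/\Ld)$, which is the global comparison already contained in H\"ormander plus Weyl; it says nothing about how either the pointwise mass or the eigenvalues are distributed among the unit sub-windows, and in particular cannot exclude the dangerous scenario of a nearly empty unit window containing a single eigenfunction with $\varphi_n(x_0)^2\sim\lambda_n$, which is exactly what would violate the lemma. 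If instead you intend a smoothed localizer of unit width at height $\Ld$, such an operator is not of the form $\psi(-h^2\Dlg)$ with $\psi\in C_0^\infty$, so the semi-classical calculus recalled in Section~\ref{SEC:PDO} (Proposition~\ref{prop-dvp}) does not apply to it -- compare Remark~\ref{rk-FIO} on wave operators -- and even granting its analysis, a lower bound for a \emph{smoothed} window count does not transfer to the sharp indicator count appearing on the right-hand side. Thus the clause ``thereby supplying the required matching lower bound'' is where the whole proof has to happen, and it is asserted rather than established; the pointwise unit-window comparison is precisely the content of \cite[Proposition 8.3]{BTT1}, which the paper invokes and which your outline does not reprove.
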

Indeed, this lemma follows directly from the following asymptotic behavior for the spectral function of $\Dlg$ due to H\"ormander \cite{Hormander}: for any $d\in\N$, there exists $c_d>0$ such that for any $\Ld\geq 0$ and $x\in\M$,
\begin{align*}
e(x,\Ld^2) \overset{\text{def}}{=} \sum_{\lambda_n^2\leq \Ld^2}(\varphi_n(x))^2 = c_d\Ld^d + O(\Ld^{d-1}).
\end{align*}
\subsection{Functional calculus}
We finally move onto the definition and the local description in terms of $\Psi$DOs of some operators used to describe the stochastic objects and to construct the Sobolev and Besov spaces needed to measure them.

 To this end, let us first define $\P_N$ to be a smooth version of the Dirichlet projection onto the frequencies $\{\lambda_n\leq N\}$. Namely, take a smooth even non-increasing cut-off $\psi_0\in C_0^{\infty}(\R)$ satisfying $\supp \psi_0 \subset [-1,1]$ and $\psi_0\equiv 1$ on $[-1/2,1/2]$. For any real-valued $u\in L^2(\M)$, we have 
\begin{align*} 
u = \sum_{n\geq 0}\langle u,\varphi_n\rangle_{L^2(\M)}\varphi_n,
\end{align*}
where
\begin{align*}
\langle u,v\rangle_{L^2(\M)} = \int_{\M}u(x)v(x)dx
\end{align*} is the inner product in $L^2(\M)$ and we simply wrote $dx$ for the volume density on $(\M,\g)$. For any $N>0$, $\P_N$ is then defined as the linear operator on $L^2(\M)$ given by
\begin{align}\label{def-P}
\P_Nu = \sum_{n\geq 0} \psi_0\Big(\frac{\lambda_n^2}{N^2}\Big)\langle u,\varphi_n\rangle_{L^2(\M)}\varphi_n.
\end{align}
In particular, if we define the finite-dimensional subspace of $L^2(\M)$
\begin{equation*}
E_N = \text{Span}\{\varphi_n,~\lambda_n \leq N\}
\end{equation*}
with the orthogonal projection 
\begin{equation*}
\Pi_N: L^2(\M)\rightarrow E_N,
\end{equation*}
then $\P_N$ maps $L^2(\M)$ into $E_N$ and
\begin{align}\label{prop-PiN}
\Pi_N\P_N = \P_N\Pi_N = \P_N.
\end{align}
Next, we define the sets of dyadic integers for $N$ as
\begin{align*}
2^{\Z_+} = \{1,2,4,...\} \text{ and }2^{\N}=2^{\Z_+}\setminus\{1\}.
\end{align*}
Hereafter, we will use the Sobolev and Besov spaces $W^{s,p}(\M)$ and $B^{s}_{p,q}(\M)$, $s\in\R$, $1\leq p,q\leq \infty$, which are defined via the norms
\begin{align*}
\|u\|_{W^{s,p}} \deff \Big\|\sum_{n\geq 0} \jb{\lambda_n}^s\langle u,\varphi_n\rangle_{L^2(\M)}\varphi_n\Big\|_{L^p(\M)},
\end{align*}
and
\begin{align*}
\|u\|_{B^s_{p,q}} \deff \Big(\big\|\P_1 u\big\|_{L^{p} }^q+\sum_{N\in 2^{\N}}N^{sq}\big\|(\P_N-\P_{N/2}) u\big\|_{L^{p} }^q\Big)^{\frac1q}.
\end{align*}

For now the Besov norms of a function $u$ are only defined in terms of projections in the eigenfunction expansion of $u$. Although it is easy to handle these norms when $p=2$ (since the $\varphi_n$'s form an orthonormal basis of $L^2(\M)$), we need an equivalent characterization to be able to estimate them when $p\neq 2$.

Let us recall the definition of the $L^2$ functional calculus. For any bounded continuous function $f$ on $\R$, we can define the bounded linear operator $f(-\Dlg)$ on $L^2(\M)$ as
\begin{align}\label{def-func-calculus}
f(-\Dlg)u = \sum_{n\geq 0}f(\lambda_n^2)\langle u,\varphi_n\rangle_{L^2(\M)}\varphi_n.
\end{align}
This defines a continuous linear map from $C_b(\R)$ to the space $\L(L^2(\M))$ of bounded linear operators on $L^2(\M)$. More generally, if $f\in\S^m$ for some $m>0$ (see \eqref{def-Sm} below), then $f(-\Dlg)$ is an unbounded operator on $L^2(\M)$ with domain given by 
\begin{align*}
D\big(f(-\Dlg)\big) = \Big\{u\in \Dp(\M),~\sum_{n\geq 0}\big|f(\lambda_n^2)\langle u,\varphi_n\rangle\big|^2<\infty\Big\}.
\end{align*}
For $N\in 2^{\N}$, we define 
\begin{align*}
\psi_{N^2}(x)= \psi_0(N^{-2}x)-\psi_0(4N^{-2}x),
\end{align*}
and
\begin{align*}
\psi_1(x)= \psi_0(x)
\end{align*}
for $N=1$. In view of the previous definition, we have $\P_N = \psi_0(-N^{-2}\Dlg)$ and for $N\in 2^{\N}$, we have 
\begin{align*}
\P_N-\P_{N/2} = \psi_{N^2}(-\Dlg).
\end{align*}

Thus we need to give a local description of the bounded linear operators which are functions of $-\Dlg$ on $L^2(\M)$ given by the functional calculus. This is the content of the next subsection.
\subsection{Pseudo-differential calculus}
We begin by collecting a few facts about (semi-classical) $\Psi$DOs. First, for $d\in\N$ and any $m\in\R$ we say that a function $f\in C^{\infty}(\R^d)$ belongs to the space $\S^m$ if for any multiindex $\beta\in\N^d$ and any $\xi\in\R^d$,
\begin{align}\label{def-Sm}
|\partial_{\xi}^{\beta}f(\xi)|\les \jb{\xi}^{m-|\beta|},
\end{align}
where $\jb{\xi}=\sqrt{1+|\xi|^2}$ and $|\beta|$ is the length of the multiindex $\beta$. Here we use the notation $A\les B$ if there exists $c>0$ (independent of the sets where $A$ and $B$ vary) such that $A\leq cB$. We also use the notations $A\sim B$ if $A\les B$ and $B\les A$, and $A\ll B$ if we can take $c=10^{-12}$. We extend this definition to functions $a: \R^d\times\R^d \rightarrow \R$, which belong to the symbol class $\S^m$ if $a \in C^{\infty}(\R^d\times\R^d)$ and satisfy for any $\alpha,\beta\in\N^d$ and $(x,\xi)\in\R^d\times\R^d$,
\begin{align}\label{def-symbolclass}
\big|\partial_x^{\alpha}\partial_{\xi}^{\beta}a(x,\xi)\big|\les \jb{\xi}^{m-|\beta|}.
\end{align}
 Then for $m\in\R$ and a symbol $a\in\S^m$ we define the semi-classical $\Psi$DO of order $m$ with symbol $a$ with respect to some semi-classical parameter\footnote{ In the following, we will take for the semi-classical parameter $h=N^{-1}$ for some $N\in\N$. } $h\in (0,1]$ to be the linear operator acting on Schwartz functions $u\in \S(\R^d)$ by the quantization rule
\begin{align}\label{def-pdo}
a(x,hD)u=\frac{1}{(2\pi)^d}\int_{\R^d}e^{ix\cdot\xi}a(x,h\xi)\widehat{u}(\xi)d\xi,
\end{align}
and $\widehat{u}$ stands for the Fourier transform of $u$. Hereafter we systematically neglect the constants $2\pi$ appearing either in \eqref{def-pdo} or in the Fourier transform. 

A particular case of Fefferman's result \cite{F} is that a (semi-classical) $\Psi$DO of order 0 extends to a bounded linear operator on $L^p(\R^d)$ (with norm independent of $h$ in the semi-classical case), for any $1<p<\infty$. It is also well-known (see for example \cite{Zworski}) that the composition of $\Psi$DOs of order $m_1$ and $m_2$ gives a $\Psi$DO of order $m_1+m_2$, and moreover the symbolic calculus gives
\begin{align*}
a(x,hD)\circ b(x,hD) = (a\#b)(x,hD),
\end{align*}
where for arbitrary $M\in\N$,
\begin{align}\label{def-pscomp}
(a\#b)(x,h\xi)=\sum_{|\alpha|=0}^{M-1}c_\alpha h^{|\alpha|}\big(\partial_{\xi}^{\alpha}a\cdot\partial_{x}^{\alpha}b\big)(x,h\xi) + O_{\S^{m_1+m_2-M}}(h^M).
\end{align}
Here we use the notation $O_{\S^{m_1+m_2-M}}(h^M)$ to mean
\begin{align*}
O_{\S^{m_1+m_2-M}}(h^M)= h^Mr_{M,a,b}(x,hD)
\end{align*}
for some $r_{M,a,b}\in \S^{m_1+m_2-M}$ (and depending continuously upon $a$ and $b$ for the composition). This implies that if $a\in\S^m$, then for any $s\in\R$, $a(x,hD)$ maps continuously $H^{s}(\R^d)$ into $H^{s-m}(\R^d)$, and for any $u\in\S(\R^d)$ we have the estimate\footnote{ The operator norm of $a(x,hD): H^s(\R^d)\rightarrow H^{s-m}(\R^d)$ depends on $h$ here because we always work with \emph{classical} Sobolev spaces, as opposition to the \emph{semi-classical} Sobolev spaces generally used in the semi-classical analysis. This is due to the \lq\lq hybrid\rq\rq~nature of our problem where we have to measure the composition of classical $\Psi$DOs with semi-classical ones.}
\begin{align}\label{estim-SCPDO}
\|a(x,hD)u\|_{H^{s-m}(\R^d)}\les h^{(m-s)\wedge 0+s\wedge 0}\|u\|_{H^s(\R^d)}.
\end{align}
Here $s\wedge 0 = \min (s,0)$. This follows directly from the uniform (in $h$) $L^2$ boundedness of the semi-classical $\Psi$DO $\jb{hD}^{s-m}a(x,hD)\jb{hD}^{-s}$ which is of order 0, and the estimates $\jb{\xi}^s\les h^{(-s)\wedge 0}\jb{h\xi}^s$ and $\jb{h\xi}^s\les h^{s\wedge 0}\jb{\xi}^s$ for any $s\in\R$ and $\xi\in\R^d$.

Let us now give a local description in terms of $\Psi$DOs of the bounded linear operators on $L^2(\M)$ given by the previous functional calculus. If $\psi$ is any smooth and compactly supported function, we can also view $\psi(-N^{-2}\Dlg)$ as a semi-classical $\Psi$DO (with semi-classical parameter $h=N^{-1}$) in local coordinates. Indeed, let us recall the result of Proposition 2.1 in \cite{BGT}.
\begin{proposition}\label{prop-dvp}
Let $\psi\in C^{\infty}_0(\R)$, $\kk: U\subset \R^d\rightarrow V\subset \M$ be a coordinate patch, and $\chi,\widetilde{\chi}\in C^{\infty}_0(V)$ with $\widetilde{\chi}\equiv 1$ on $\supp\chi$. Then there exists a sequence of symbols $(a_m)_{m\geq 0}$ in $C^{\infty}_0(U\times\R^d)$ with the following properties:\\

\noi\smallskip
\textup{(i)}  for any $M\in\N$, any $h\in (0,1]$ and any $s\in\R$, $0\le \s \le M$, we have the expansion
\begin{align}\label{dvp-psi}
\Big\|\kk^{\star}\big(\chi\psi(-h^2\Dlg)v\big) - \sum_{m=0}^{M-1}h^ma_m(x,hD)\kk^\star(\wt\chi v)\Big\|_{H^ {s+\s}(\R^d)} \les h^{M-\max(\s+s,\s,|s|)}\|v\|_{H^s(\M)}
\end{align} 
for any $v\in C^{\infty}(\M)$;\\
\noi
\textup{(ii)} for any $x\in U$ the principal symbol is given by
\begin{equation*}
a_0(x,\xi)= \chi(\kk(x))\psi\big(-p_2(x,\xi)\big),
\end{equation*}
where $p_2$ has been defined in \eqref{def-psDelta},\\

\noi\smallskip
\textup{(iii)} for all $m\geq 0$, $a_m$ is supported in 
\begin{align}\label{support}
\Big\{(x,\xi)\in U\times\R^d,~\kk(x)\in\supp\chi,~-p_2(x,\xi)\in \supp \psi\Big\}.
\end{align}
\end{proposition}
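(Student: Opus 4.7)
The plan is to prove Proposition~\ref{prop-dvp} by combining the Helffer--Sj\"ostrand functional calculus formula with a local semi-classical parametrix construction for the resolvent of $-\Dlg$. First I would fix an almost analytic extension $\wt\psi \in C^\infty_0(\C)$ of $\psi$ satisfying $|\dbar\wt\psi(z)|\les_N |\Im z|^N$ for arbitrarily large $N$, and start from the identity
\begin{equation*}
\psi(-h^2 \Dlg) = -\frac{1}{\pi}\int_\C \dbar\wt\psi(z)\,(z+h^2\Dlg)^{-1}\,dL(z),
\end{equation*}
where $dL$ is Lebesgue measure on $\C$. The off-diagonal contribution $\chi\,\psi(-h^2\Dlg)(1-\wt\chi)$ turns out to be smoothing with gain of any power of $h$ thanks to the disjoint supports of $\chi$ and $1-\wt\chi$: the Schwartz kernel of $(z+h^2\Dlg)^{-1}$ applied to functions supported away from $\supp\chi$ is $O_N(h^N|\Im z|^{-N})$ by an elliptic parametrix off the characteristic set, and integration against $\dbar\wt\psi$ then gives arbitrary decay. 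Hence it suffices to analyze $\kk^\star\chi\,(z+h^2\Dlg)^{-1}\kk^\star\wt\chi$ in the single chart $(U,V,\kk)$.

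In that chart, $-h^2\Dlg$ reads as a semi-classical operator with principal symbol $-p_2(x,\xi)$ and an $h$-times-first-order subprincipal part coming from $p_1$ and the $\sqrt{\det\g}$ factors. Off the characteristic set $\{z+p_2(x,\xi)=0\}$ the operator $z+h^2\Dlg$ is elliptic, so using the symbolic calculus \eqref{def-pscomp} I would construct a parametrix $b(x,hD,z,h)$ with symbol expansion
\begin{equation*}
b(x,\xi,z,h) \sim \sum_{m\ge 0} h^m\, b_m(x,\xi,z), \qquad b_0(x,\xi,z)=\frac{\chi(\kk(x))}{z+p_2(x,\xi)},
\end{equation*}
each $b_m$ being a finite sum of terms $P_{m,j}(x,\xi)\,(z+p_2(x,\xi))^{-(j+1)}$ with $P_{m,j}$ smooth, $x$-supported in $\kk^{-1}(\supp\chi)$, and polynomial in $\xi$ of controlled degree. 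Plugging this into the Helffer--Sj\"ostrand integral and setting
\begin{equation*}
a_m(x,\xi) = -\frac{1}{\pi}\int_\C \dbar\wt\psi(z)\, b_m(x,\xi,z)\, dL(z),
\end{equation*}
the standard identity $-\pi^{-1}\int \dbar\wt\psi(z)(z-\ld)^{-1}dL(z)=\psi(\ld)$ yields the principal symbol $a_0(x,\xi)=\chi(\kk(x))\psi(-p_2(x,\xi))$ of (ii). Iterating this identity and differentiating in $\ld$ expresses $a_m$ as a finite sum of terms $P_{m,j}(x,\xi)\,\psi^{(j)}(-p_2(x,\xi))$ with $0\le j\le m$ and a leading $j=m$ contribution, which gives both $a_m\in C^\infty_0(U\times\R^d)$ and the support property (iii).

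Finally, truncating the parametrix at order $M$ leaves a remainder $R_M(z,h)$ whose symbol lies in $h^M\S^{-M}$ up to a polynomial loss in $(z+p_2(x,\xi))^{-1}$; choosing $\wt\psi$ so that $\dbar\wt\psi$ vanishes to sufficiently high order on $\R$ (depending on $M$) guarantees convergence of the $z$-integral and, via the mapping property \eqref{estim-SCPDO} of order $-M$ semi-classical $\Psi$DOs, produces the global bound \eqref{dvp-psi}, the $h$-loss there reflecting the discrepancy between classical and semi-classical Sobolev norms at both endpoints. The main obstacle will be the careful bookkeeping across the two layers of symbolic calculus --- constructing the parametrix in the $z$-variable, then composing with the classical cutoffs $\chi,\wt\chi$ and pulling back through $\kk$ --- and in particular tracking the joint $(\xi,z)$-dependence of each $b_m$ so that a single order of vanishing of $\dbar\wt\psi$ simultaneously ensures the convergence of the contour integral and the announced $H^s\to H^{s+\s}$ remainder estimate.
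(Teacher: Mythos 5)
Your overall route---Helffer--Sj\"ostrand formula combined with a local semi-classical parametrix for the resolvent $(z+h^2\Dlg)^{-1}$---is exactly the one underlying the cited reference of Burq--G\'erard--Tzvetkov, which the paper does not re-prove; Remark~\ref{rk-pdolocal} sketches precisely this method, and the paper's only addition is the observation that the $H^s\to H^{s+\s}$ bound for general $s$ in \eqref{dvp-psi} follows from the $s=0$ computation of that reference together with \eqref{estim-SCPDO}. Your treatment of the off-diagonal term, the parametrix expansion, the formula $a_m=-\pi^{-1}\int\dbar\wt\psi\,b_m\,dL$, and the bookkeeping between the $\dbar\wt\psi$-vanishing order and the remainder bound are all the right ingredients.

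There is, however, a concrete gap in your derivation of the support property (iii). You assert that $a_m$ is a finite sum $\sum_j P_{m,j}(x,\xi)\psi^{(j)}(-p_2(x,\xi))$ with $0\le j\le m$ and infer (iii) from this, but both the range and the inference are off. Already at $m=1$ the parametrix recursion yields $b_1$ as a combination of $(z+p_2)^{-2}$ and $(z+p_2)^{-3}$ (from the sub-principal symbol and from $\partial_x b_0$ respectively), so after integrating against $\dbar\wt\psi$ one gets $a_1$ as a combination of $\psi'(-p_2)$ and $\psi''(-p_2)$, i.e.\ $j\in\{1,2\}$, not $j\in\{0,1\}$; more generally the powers of $(z+p_2)^{-1}$ appearing in $b_m$ run up to roughly $2m+1$. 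More importantly, since $\supp\psi^{(j+1)}\subseteq\supp\psi^{(j)}$ is a decreasing chain, the asserted inclusion $\supp a_m\subseteq\{-p_2\in\supp\psi^{(m)}\}$ requires \emph{every} $j$ appearing in $a_m$ to satisfy $j\ge m$, not $j\le m$: a range starting at $j=0$ would only deliver the much weaker statement $\supp a_m\subseteq\{-p_2\in\supp\psi\}$. What actually needs to be checked is that the parametrix recursion never produces a power $(z+p_2)^{-(j+1)}$ with $j<m$ in $b_m$, which amounts to tracking, through the composition calculus, how many $x$-derivatives are forced to hit the $(z+p_2)$ factor rather than the $(x,\xi)$-dependent coefficients; as written, your proposal does not address this and the stated range would in fact break (iii).
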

In particular, this means that for $\psi\in C^{\infty}_0(\R)$, the semi-classical operator $\psi(-h^2\Dlg)\in\L\big(L^2(\M)\big)$ defined by the functional calculus can be described locally by some $\Psi$DOs with symbol in\footnote{See also \cite[Section 14.3.2]{Zworski}.} 
\begin{align*}
\S^{-\infty}(\R^d\times\R^d)=\bigcap_{m\in\R}\S^m(\R^d\times\R^d).
\end{align*}
Note that the smoothing property of the remainder in \eqref{dvp-psi} is only stated for $s=0$ in \cite[Proposition 2.1]{BGT}, but one can derive the bound in \eqref{dvp-psi} by the same computation as in \cite{BGT} and using \eqref{estim-SCPDO}.
\begin{remark}\label{rk-pdolocal}
\rm
This result relies on describing $\psi(-\Dlg)$ through Helffer-Sj\"ostrand's formula
\begin{align*}\psi(-\Dlg)=-\frac{1}{\pi}\int_{\C}\bar{\partial}\widetilde{\psi}(z) (z+\Dlg)^{-1}dz,\end{align*}
where $\widetilde{\psi}$ is an almost analytic extension of $\psi$, and using that the resolvent $(z+\Dlg)^{-1}$ is locally a $\Psi$DO of order $-2$. In particular, one can see that the above integral is absolutely convergent for any function $\psi$ in the class
\begin{align*}
\A = \bigcup_{m<0}\S^m(\R)
\end{align*}
(which contains $C_0^{\infty}(\R)$), so that the integral representation of $\psi(-\Dlg)$ also holds for $\psi\in\A$ (see \cite[Chapter 2]{Davies}). Using the same argument, for any $\psi\in\S^m(\R)$, $m<0$, then $\psi(-\Dlg)$ is locally given by a $\Psi$DO of order $-2m$ with principal symbol
\begin{align*}
\psi\big(-p_{2}(x,\xi)\big)\in\S^{-2m}(\R^d\times\R^d).
\end{align*}
\end{remark}
Using the previous proposition, we get the following Bernstein type estimate for the $L^p(\M)\rightarrow L^q(\M)$ mapping property of the operator $\psi(-h^2\Dlg)$. See Corollary 2.4 in \cite{BGT}.
\begin{corollary}\label{lem-boundedness-psi-delta}
Under the conditions of the previous proposition, for any $1\leq p\leq q\leq \infty$, there exists $C>0$ such that for any $u\in C^{\infty}(\M)$ and $h\in (0,1]$,
\begin{equation*}
\|\psi(-h^2\Dlg)u\|_{L^q(\M)}\le C h^{d\big(\frac{1}{q}-\frac{1}{p}\big)}\|u\|_{L^p(\M)}.
\end{equation*}
\end{corollary}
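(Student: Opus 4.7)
The plan is to reduce matters to a Young-type kernel bound for semi-classical $\Psi$DOs on $\R^d$ whose symbols have compact support in $\xi$, using the local description of $\psi(-h^2\Dlg)$ furnished by Proposition~\ref{prop-dvp}.

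First I would localize using the finite atlas and partition of unity, so that
\begin{align*}
\|\psi(-h^2\Dlg)u\|_{L^q(\M)}\les\sum_{j\in\J}\|\chi_j\psi(-h^2\Dlg)u\|_{L^q(V_j)},
\end{align*}
and pull each piece back to $\R^d$ via $\kk_j^{\star}$ (with bounded nondegenerate Jacobian). Picking $\wt\chi_j\in C^{\infty}_0(V_j)$ with $\wt\chi_j\equiv 1$ on $\supp\chi_j$, Proposition~\ref{prop-dvp} gives the expansion
\begin{align*}
\kk_j^{\star}\bigl(\chi_j\psi(-h^2\Dlg)u\bigr) = \sum_{m=0}^{M-1}h^m a_m(x,hD)\kk_j^{\star}(\wt\chi_j u) + R_M,
\end{align*}
where each $a_m\in C^{\infty}_0(U_j\times\R^d)$ is compactly supported in both variables; the crucial $\xi$-compactness comes from part~(iii) together with $\psi\in C^{\infty}_0$.

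The Schwartz kernel of $a_m(x,hD)$ is $K_m(x,y)=h^{-d}b_m\bigl(x,(x-y)/h\bigr)$, where $b_m(x,z)=(2\pi)^{-d}\int_{\R^d}e^{iz\cdot\eta}a_m(x,\eta)\,d\eta$ is Schwartz in $z$ uniformly in $x$. Hence $|K_m(x,y)|\les G(x-y)$ for $G(z)=h^{-d}\jb{z/h}^{-N}$ (any $N>d$), and a direct change of variables gives $\|G\|_{L^r(\R^d)}\les h^{d(1/q-1/p)}$ with $r$ defined by $1+1/q=1/r+1/p$. Young's convolution inequality then yields
\begin{align*}
\|a_m(x,hD)f\|_{L^q(\R^d)}\les h^{d(1/q-1/p)}\|f\|_{L^p(\R^d)},
\end{align*}
and combining this with $\|\kk_j^{\star}(\wt\chi_j u)\|_{L^p(\R^d)}\les\|u\|_{L^p(\M)}$ controls the main summation, the prefactors $h^m\leq 1$ being harmless.

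It remains to absorb the residual $R_M$. Picking $M$ large and combining \eqref{estim-SCPDO} with Sobolev embeddings $H^{s+\sigma}(\R^d)\hookrightarrow L^q(\R^d)$ and $L^p(\M)\hookrightarrow H^{s}(\M)$ for appropriate $s\le 0$ and $\sigma>0$ gives $\|R_M\|_{L^p(\M)\to L^q(\R^d)} = O(h^N)$ for any $N$, which easily dwarfs the target scaling. The main substantive point is the Schwartz kernel estimate for $a_m(x,hD)$; it hinges on the compact $\xi$-support of the $a_m$ inherited from $\psi\in C^{\infty}_0$, without which Young's inequality would fail to produce the clean scaling $h^{d(1/q-1/p)}$. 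Everything else reduces to careful bookkeeping across the finite atlas.
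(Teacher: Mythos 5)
Your argument is correct and is essentially the proof of Corollary 2.4 in Burq--G\'erard--Tzvetkov, which the paper simply cites here: localize via the atlas, invoke Proposition~\ref{prop-dvp} to write $\psi(-h^2\Dlg)$ as a sum of compactly supported semi-classical $\Psi$DOs plus an $O(h^\infty)$-smoothing remainder, and observe that the kernel of each $a_m(x,hD)$ is $h^{-d}b_m(x,(x-y)/h)$ with $b_m$ Schwartz in the second variable (uniformly in $x$, thanks to the compact support in $(x,\xi)$ coming from $\psi\in C_0^\infty$ and property (iii) of Proposition~\ref{prop-dvp}), so that Young's inequality gives the clean scaling $h^{d(1/q-1/p)}$, while the remainder is absorbed by Sobolev embeddings and \eqref{estim-SCPDO} after choosing $M$ large. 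The bookkeeping with the finite atlas, the identification $1/r-1=1/q-1/p$ and its compatibility with $p\le q$, and the role of the compact $\xi$-support are all handled correctly.
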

\subsection{More on the function spaces}
In order to close the fixed point argument in the proofs of the well-posedness results, we will need a fractional Leibniz rule in $B^s_{p,q}(\M)$. First, we need an equivalent characterization of the topology on the Besov spaces $B^s_{p,q}(\M)$.
\begin{proposition}\label{prop-norm}
Let $\kk: U\subset \R^d\rightarrow V\subset\M$ be a coordinate patch and $\chi\in C^{\infty}_0(V)$. For any $s\in\R$ and $1\leq p,q\leq \infty$, there exist $c,C>0$ such that for any $u\in C^{\infty}(\M)$,
\begin{align}\label{equiv-norm}
c\|\chi u\|_{B^s_{p,q}(\M)}\leq  \|\kk^{\star}(\chi u)\|_{B^{s}_{p,q}(\R^d)}\leq C\|u\|_{B^s_{p,q}(\M)}.
\end{align}
\end{proposition}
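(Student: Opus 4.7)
The plan is to compare the manifold-side dyadic projectors $\Delta_M := \P_M - \P_{M/2}$ (for $M \ge 2$ dyadic, with $\Delta_1 := \P_1$) against a standard Euclidean dyadic Fourier partition of unity $\{\wt\Phi_M\}_{M\ge 1}$ on $\R^d$, each $\wt\Phi_M$ a smooth multiplier supported in $|\xi|\sim M$. Writing $\Delta_M = \psi(-M^{-2}\Dlg)$ (with $\psi = \psi_0-\psi_0(4\cdot)$ for $M\ge 2$ and $\psi=\psi_0$ for $M=1$) and applying Proposition~\ref{prop-dvp} with $h=M^{-1}$, the operator $\kk^\star(\chi\Delta_M\cdot)$ equals, modulo a smoothing remainder $R_M$ gaining arbitrary powers of $M^{-1}$, a semi-classical $\Psi$DO $a_M(x,M^{-1}D)\kk^\star(\wt\chi\,\cdot)$ whose principal symbol is $\chi(\kk(x))\psi(-p_2(x,\xi))$. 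By the ellipticity bound \eqref{bound-p}, the support property \eqref{support} forces $a_M$ to be supported in $\{(x,\xi):-p_2(x,\xi)\sim 1\}\subset\{|\xi|\sim 1\}$, so in classical variables $a_M(x,M^{-1}D)$ has Fourier localization in $\{|\xi|\sim M\}$, matching the support of $\wt\Phi_M$.

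\textbf{Upper bound.} For $\|\kk^\star(\chi u)\|_{B^s_{p,q}(\R^d)}\les\|u\|_{B^s_{p,q}(\M)}$, expand $\wt\Phi_N\kk^\star(\chi u)=\sum_M \wt\Phi_N\kk^\star(\chi\Delta_M u)$ and insert a fatter spectral bump $\wt\Delta_M$ with $\wt\Delta_M\Delta_M=\Delta_M$ so that Proposition~\ref{prop-dvp} gives
\[
\kk^\star(\chi\Delta_M u)=\wt a_M(x,M^{-1}D)\kk^\star(\wt\chi\Delta_M u)+\wt R_M(\Delta_M u).
\]
When $|\log_2(N/M)|\gg 1$ the $\xi$-supports of $\wt\Phi_N$ and of $\wt a_M(x,M^{-1}\cdot)$ are disjoint, so by symbolic calculus the composition is smoothing at arbitrary order with gain in any negative power of $\max(N,M)$. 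When $M\sim N$, combining Fefferman's uniform-in-$h$ $L^p$-boundedness of $0$-order semi-classical $\Psi$DOs with the elementary change-of-variables bound $\|\kk^\star(\wt\chi\Delta_M u)\|_{L^p(\R^d)}\les\|\Delta_M u\|_{L^p(\M)}$ yields
\[
\|\wt\Phi_N\kk^\star(\chi\Delta_M u)\|_{L^p(\R^d)}\les \|\Delta_M u\|_{L^p(\M)} + \text{(rapidly decaying in }\max(N,M)\text{)}.
\]
Multiplying by $N^s$ and taking the $\ell^q$-norm in $N$, then using Minkowski's inequality, yields the claimed bound.

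\textbf{Lower bound and main obstacle.} The reverse inequality $\|\chi u\|_{B^s_{p,q}(\M)}\les\|\kk^\star(\chi u)\|_{B^s_{p,q}(\R^d)}$ is proved symmetrically: expand $\kk^\star(\chi u)=\sum_M\wt\Phi_M\kk^\star(\chi u)$, and for each $N$ insert an auxiliary cutoff $\chi'\in C^\infty_0(V)$ with $\chi'\equiv 1$ on $\supp\chi$ so that Proposition~\ref{prop-dvp} represents $\kk^\star(\chi'\Delta_N(\chi u))$ as a local semi-classical $\Psi$DO plus a smoothing error; the disjoint-frequency argument again kills the $M\not\sim N$ terms, and the global manifold norm is recovered from $(1-\chi')\Delta_N(\chi u)=O(N^{-\infty})$ by standard off-diagonal decay of the Schwartz kernel of $\Delta_N$ outside a scale-$N^{-1}$ neighborhood of the diagonal. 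The main obstacle is the \emph{hybrid} symbolic calculus needed to compose the classical Fourier multiplier $\wt\Phi_N(D)$ with the semi-classical $\Psi$DO $a_M(x,M^{-1}D)$ with smoothing estimates uniform in $h=M^{-1}$; this is overcome by viewing $\wt\Phi_N(D)$ itself as a semi-classical operator at scale $h=M^{-1}$ with symbol $\wt\Phi_N(h^{-1}\cdot)$ and invoking the expansion \eqref{def-pscomp}, whose terms all vanish when the $\xi$-supports are overlap-free, leaving only the smoothing remainder. A final bookkeeping point is to choose the order of expansion in \eqref{dvp-psi} sufficiently large (depending on $s$, $p$, $q$) so that the off-diagonal contributions sum absolutely against the weights $N^s$ and the $\ell^q$-summation over $N,M$.
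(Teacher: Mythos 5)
Your upper-bound argument follows the paper's route closely: expand in a Euclidean dyadic decomposition, localize $\P_N-\P_{N/2}$ via Proposition~\ref{prop-dvp}, kill the off-diagonal blocks by the hybrid symbolic calculus, and bound the diagonal blocks by uniform $L^p$-boundedness plus Minkowski. Two remarks there. First, the genuinely delicate step — showing that the composition $\theta_N(D)\circ a_m(x,N_1^{-1}D)$ of a \emph{classical} multiplier with a \emph{semi-classical} $\Psi$DO gains arbitrary powers of $(N\vee N_1)^{-1}$ with a Schur-type kernel bound — is only named, not carried out; the paper does this by four successive integrations by parts in the remainder symbol \eqref{symbol_rest}, and one must be careful about the compensating factor $(N\wedge N_1)^d$ that appears. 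Second, invoking Fefferman's theorem only gives $1<p<\infty$, whereas the statement allows $p\in\{1,\infty\}$; the paper instead deduces the needed uniform $L^p$-boundedness from an explicit Schur test on the Schwartz kernel, which covers all $1\le p\le\infty$.

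For the lower bound you take a different (and, for the main term, arguably cleaner) route than the paper: you use a single fattened cut-off $\chi'$ together with the $O(N^{-\infty})$ off-diagonal decay of the kernel of $\psi_{N^2}(-\Dlg)$ to control $(1-\chi')\Delta_N(\chi u)$, whereas the paper sums over a partition of unity $\{\chi_j\}$ and then appeals to diffeomorphism-invariance of Besov spaces on $\R^d$ to relate each $v_j$ back to $\kk^\star(\chi u)$. Both are valid for the main term. However, this brings up the genuine gap in your proposal: you never address the case $s<0$. Both the remainder $V_{-M,N}$ from Proposition~\ref{prop-dvp} and your off-diagonal term $(1-\chi')\Delta_N(\chi u)$ are controlled by Lebesgue-type norms such as $\|\chi u\|_{L^r(\M)}\sim\|\kk^\star(\chi u)\|_{L^r(\R^d)}$, and these can only be absorbed into $\|\kk^\star(\chi u)\|_{B^s_{p,q}(\R^d)}$ when there is a (local) embedding $B^s_{p,q}(\R^d)\hookrightarrow L^r(\R^d)$, which fails for negative $s$ across the full range $1\le p,q\le\infty$. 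The paper explicitly restricts the direct argument to $s\ge 0$ and then treats $s<0$ by a separate duality step, pairing $\chi u$ against functions in $B^{-s}_{p',q'}$ and invoking the already-established right-hand inequality for $-s>0$. Without that step (or a substitute) your proof of the left-hand inequality is incomplete for $s<0$, which is the regime actually used in the rest of the paper.
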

\begin{proof}
First, observe that it is enough to establish the right-hand side inequality, since by duality it holds
\begin{align*}
\|\chi u\|_{B^s_{p,q}(\M)} &= \sup_{\|\wt u\|_{B^{-s}_{p',q'}(\M)}\le 1} \int_\M \chi u\cdot\wt u  = \sup_{\|\wt u\|_{B^{-s}_{p',q'}(\M)}\le 1} \int_{\R^d} \kk^{\star}(\chi u)\cdot\kk^{\star}(\wt\chi\wt u)\\
& \les \sup_{\|\wt u\|_{B^{-s}_{p',q'}(\M)}\le 1}\|\kk^{\star}(\chi u)\|_{B^s_{p,q}(\R^d)} \|\kk^{\star}(\wt\chi\wt u)\|_{B^{-s}_{p',q'}(\R^d)} \les \|\kk^{\star}(\chi u)\|_{B^s_{p,q}(\R^d)},
\end{align*}
where in the last step we used the right-hand side inequality in \eqref{equiv-norm}. This shows that the left-hand side inequality follows from the right-hand side one.

We thus need to estimate 
\begin{align*}
\sum_{N\in 2^{\Z_+}}N^{sq}\|\theta_N(D)\kk^{\star}(\chi u)\|_{L^p(\R^d)}^q,
\end{align*}
 where $\{\theta_N\}_{N\in 2^{\Z_+}}$ is an inhomogeneous dyadic partition of unity in $\R^d$. We first take a fattened version $\widetilde{\psi}_{N_1^2}$ of $\psi_{N_1^2}$, where $\psi_{N_1^2}$ is the multiplier in the definition of $\P_{N_1}$, and decompose
\begin{align*}
\theta_N(D)\kk^{\star}(\chi u) &= \sum_{N_1\in 2^{\Z_+}}\theta_N(D)\kk^{\star}(\chi u_{N_1})\\
&=\sum_{N_1\sim N}\theta_N(D)\kk^{\star}(\chi u_{N_1})+\sum_{N_1\not\sim N}\theta_N(D)\kk^{\star}\big(\chi\widetilde{\psi}_{N_1^2}(-\Dlg)u_{N_1}\big),
\end{align*}
where $u_{N_1}=\psi_{N_1^2}(-\Dlg)u$. To bound the terms in the second sum above, we have the following lemma.
\begin{lemma}\label{LEM:Picasso}
Let $\kk$ and $\chi$ as in Proposition~\ref{prop-norm}. Then for any $u\in L^p(\M)$, $p\geq 1$, and any $N,N_1\in  2^{\Z_+}$ with $(N\vee N_1)\gg (N\wedge N_1)$, we have for arbitrary $B>0$:
\begin{align}\label{estim-key-norm}
\big\|\theta_N(D)\kk^{\star}\big(\chi\widetilde{\psi}_{N_1^2}(-\Dlg)u\big)\big\|_{L^p(\R^d)}\les (N\vee N_1)^{-B}\|u\|_{L^p(\M)}.
\end{align}
\end{lemma} 
With this lemma at hand, we can finish establishing the right-hand side inequality in \eqref{equiv-norm}. Indeed, for the almost diagonal terms, we have from Minkowski's inequality, the uniform boundedness of the Littlewood-Paley projectors $\theta_N(D)$ on $L^p(\R^d)$, and H\"older's inequality with Fubini's theorem that
\begin{align*}
\big\|\mathbf{1}_{N_1\sim N}N^s\theta_N(D)\kk^\star(\chi u_{N_1})\big\|_{\ell^q_N(2^{\Z_+}) L^p(\R^d)\ell^1_{N_1}(2^{\Z_+})}&\les \big\|\mathbf{1}_{N_1\sim N}N^s u_{N_1}\big\|_{\ell^q_{N_1}(2^{\Z_+})\ell^q_{N}(2^{\Z_+}) L^p(\M)}\\
&\les \|u\|_{B^s_{p,q}(\M)},
\end{align*}
while for the off-diagonal terms we have from Minkowski's inequality and Lemma~\ref{LEM:Picasso} applied to $u_{N_1}$ with $B>2|s|$:
\begin{align*}
&\big\|\mathbf{1}_{N_1\not\sim N}\theta_N(D)\kk^{\star}\big(\chi\widetilde{\psi}_{N_1^2}(-\Dlg)u_{N_1}\big)\big\|_{\ell^q_N(2^{\Z_+})L^p(\R^d)\ell^1_{N_1}(2^{\Z_+})}\\
&\qquad\les \big\|\mathbf{1}_{N_1\not\sim N}N^s(N\vee N_1)^{-B}u_{N_1}\big\|_{\ell^q_{N}(2^{\Z_+})L^p(\M)\ell^1_{N_1}(2^{\Z_+})}\\
&\qquad\les \big\|N_1^{|s|-B}u_{N_1}\big\|_{\ell^1_{N_1}(2^{\Z_+})L^p(\M)} \les \|u\|_{B^s_{p,q}(\M)}.
\end{align*}
This concludes the proof of Proposition~\ref{prop-norm}, assuming Lemma~\ref{LEM:Picasso}.
\end{proof}

\begin{proof}[Proof of Lemma~\ref{LEM:Picasso}] 
For $M\gg 1$ to be chosen later, we use Proposition~\ref{prop-dvp} to decompose
\begin{align*} 
\theta_N(D)\kk^{\star}\big(\chi\widetilde{\psi}_{N_1^2}(-\Dlg)u\big) = \theta_N(D)\Big\{\sum_{m=0}^{M-1} N_1^{-m} a_m(x,N_1^{-1}D)\kk^{\star}(\widetilde{\chi}u)+U_{-M,N_1}\Big\},
\end{align*}
where
\begin{align*}
\big\|U_{-M,N_1}\big\|_{H^{s_1}(\M)} \les N_1^{s_1+s_2-M}\|u\|_{H^{-s_2}(\M)}
\end{align*}
for any $s_1,s_2\ge 0$ with $s_1+s_2\le M$, in view of Proposition~\ref{prop-dvp}~(i) with $s=-s_2$ and $\s=s_1+s_2$.

Note that from the support property \eqref{support} of $a_{m}$ and the assumption $(N\vee N_1)\gg (N\wedge N_1)$, we have from the symbolic calculus that $\theta_N(D)\circ a_{m}(x,N_1^{-1}D)$ vanishes at infinite order, but we have to be cautious with the dependence in $N$ and $N_1$ within the remainder in \eqref{def-pscomp}. Namely for any $A\geq 1$, we use the composition rule \eqref{def-pscomp} to expand
\begin{align*}
&\theta_N(D)\circ a_{m}(x,N_1^{-1}D)\\&~~ = \sum_{|\alpha|=0}^{A-1}c_{\alpha}N^{-|\alpha|}\Big\{\partial^{\alpha}\theta_N(\xi)\cdot\partial ^{\alpha}a_{m}(x,N_1^{-1}\xi)\Big\}(x,D)+N^{-A}r_{A,N,N_1}(x,D)
\\&~~=N^{-A}r_{A,N,N_1}(x,D)
\end{align*}
for some constants $c_{\alpha}$. Indeed the last equality results of the support property of $a_{m}$ and the assumption $(N\vee N_1)\gg (N\wedge N_1)$ so that the supports (in $\xi$) of $\theta_N$ and $a_m(x,N_1^{-1}\xi)$ are disjoint. Here $r_{A,N,N_1}$ is a $\Psi$DO with symbol
\begin{align}\label{symbol_rest}
\sum_{|\alpha|=A}c_{\alpha}\int_{\R^d}\int_{\R^d}\int_0^1e^{-iz\cdot\xi_1}\partial^{\alpha}\theta_N(\xi+\xi_1)\partial ^{\alpha}a_{m}(x+tz,N_1^{-1}\xi)(1-t)^{A-1}dtd\xi_1 dz.
\end{align}
This is obtained as a by-product of the proof of the symbolic product rule for $\Psi$DOs: writing down the symbol of the composition, performing the Taylor expansion of this symbol and integrating by parts gives the sum for $|\alpha|<A$, and the rest which corresponds to the symbol in \eqref{symbol_rest}. In particular, in view of the support properties in $\xi$ of $\theta_N(\xi)$ and $a_{m}(x,N_1^{-1}\xi)$ (and the boundedness of $\M$), we can integrate by parts the kernel 
\begin{align*}
R_{A,N,N_1}(x,y)=\frac{1}{(2\pi)^d}\int_{\R^d}e^{i(x-y)\cdot\xi}r_{A,N,N_1}(x,\xi)d\xi
\end{align*} of $r_{A,N,N_1}(x,D)$ with respect to $z$ in \eqref{symbol_rest} to get some negative powers of $\xi_1$. Indeed, for any $\ell_1\in\N$, we integrate by parts to get
\begin{align*}
R_{A,N,N_1}(x,y)=&\sum_{|\alpha|=A}c_{\alpha}\int_{\R^d}\int_{\R^d}\int_{\R^d}\int_0^1\jb{\xi_1}^{-\ell_1}e^{-i(z\cdot\xi_1-(x-y)\cdot\xi)}\partial^{\alpha}\theta_N(\xi+\xi_1)\\&~~\cdot\jb{D_z}^{\ell_1}\big(\partial ^{\alpha}a_{m}(x+tz,N_1^{-1}\xi)t^{|\beta|}\big)(1-t)^{A-1}dtd\xi_1 dzd\xi.
\end{align*} 
Similarly, in order to get some decay in $x$, we can integrate by parts in $\xi$ to get for any $\ell_2\in\N$
\begin{align*}
&R_{A,N,N_1}(x,y)\\&=\sum_{|\alpha|=A}c_{\alpha}\int_{\R^d}\int_{\R^d}\int_{\R^d}\int_0^1\jb{\xi_1}^{-\ell_1}\jb{x-y}^{-\ell_2}e^{-i(z\cdot\xi_1-(x-y)\cdot\xi)}\\&\times\jb{D_{\xi}}^{\ell_2}\Big[\partial^{\alpha}\theta_N(\xi+\xi_1)\jb{D_z}^{\ell_1}\big(\partial^{\alpha}a_{m}(x+tz,N_1^{-1}\xi)\big)(1-t)^{A-1}\Big]dtd\xi_1 dzd\xi.
\end{align*} 
We finally integrate by parts in $\xi_1$ to get some decay in $z$, leading to
\begin{align*}
&R_{A,N,N_1}(x,y)\\
&=\sum_{|\alpha|=A}c_{\alpha}\int_{\R^d}\int_{\R^d}\int_{\R^d}\int_0^1\jb{\xi_1}^{-\ell_1}\jb{x-y}^{-\ell_2}\jb{z}^{-\ell_3}e^{-i(z\cdot\xi_1-(x-y)\cdot\xi)}\\&\times\jb{D_{\xi}}^{\ell_2}\Big[\partial^{\alpha}\jb{D_{\xi_1}}^{\ell_3}\theta_N(\xi+\xi_1)\jb{D_z}^{\ell_1}\big(\partial^{\alpha}a_{m}(x+tz,N_1^{-1}\xi)\big)(1-t)^{A-1}\Big]dtd\xi_1 dzd\xi.
\end{align*}
In view of $(N\vee N_1)\gg (N\wedge N_1)$ and the localization of $\xi$ and $(\xi+\xi_1)$, we have the localization $|\xi_1|\sim (N\vee N_1)$. Moreover, for fixed $\xi_1$, in view of the support properties of $\theta_N$ and $a_m$ then $\xi$ lies in a set of size at most $(N\wedge N_1)^d$. Hence for any $\ell_1,\ell_2,\ell_3>2$ the integrand is absolutely integrable and we get the bound
\begin{align*}
\big|R_{A,N,N_1}(x,y)\big|\les (N\wedge N_1)^d(N\vee N_1)^{d-\ell_1}\jb{x-y}^{-\ell_2}.
\end{align*} 
We can then integrate in $x$ or $y$ provided that we take $\ell_2>d$, to obtain
\begin{align*}
\|R_{A,N,N_1}\|_{L^{\infty}_x L^1_y}+\|R_{A,N,N_1}\|_{L^{\infty}_yL^1_x }\les (N\wedge N_1)^{d}(N\vee N_1)^{d-\ell_1}.
\end{align*}
This is enough to estimate the contribution
\begin{align*}
N^{-A}N_1^{-m}\|\theta_N(D)\circ a_{m}(x,N_1^{-1}D)\kk^{\star}(\wt\chi u)\|_{L^p(\R^d)}
\end{align*}
by the right-hand side of \eqref{estim-key-norm} in view of Schur's lemma, since $\ell_1\in\N$ is arbitrary.

As for the remainder in the use of Proposition~\ref{prop-dvp}, we first take $M=B+s_1+s_2+10$ with $s_1$ and $s_2$ large enough so that, by Sobolev embedding, $H^{s_1}(\R^d)\subset L^p(\R^d)$, and by Sobolev embedding and the compactness of $\M$, $L^p(\M)\subset H^{-s_2}(\M)$. Then, in the case $N\ll N_1$, we use the boundedness of $\theta_N(D): L^p(\R^d)\rightarrow L^p(\R^d)$ to bound
\begin{align*}
\|\theta_N(D)U_{-M,N_1}\|_{L^p(\R^d)}&\les \|U_{-M,N_1}\|_{H^{s_1}(\R^d)}\les N_1^{s_1+s_2-M}\|u\|_{H^{-s_2}(\M)} \les N_1^{-B}\|u\|_{L^p(\M)}.
\end{align*} 
In the other case $N\gg N_1$, using that $\theta_N$ is then supported on an annulus we have
\begin{align*}
\|\theta_N(D)U_{M,N_1}\|_{L^p(\R^d)}&\les N^{-B}\|U_{M,N_1}\|_{H^{s_1+B}(\R^d)}\\
&\les N_1^{s_1+s_2+B-M} N^{-B}\|u\|_{L^p(\M)}\les N^{-B}\|u\|_{L^p(\M)}.
\end{align*}
This concludes the proof of the lemma.
\end{proof}

Using Proposition~\ref{prop-norm}, the finiteness of $\J$ and that the embeddings and the fractional Leibniz rule hold on $\R^d$, we get the following consequences of Proposition \ref{prop-norm}.
\begin{corollary}\label{cor-besov}
Let $\M$ be any compact Riemannian manifold of dimension $d$ without boundary. \\
\textup{(i)} For any $s\in\R$ we have $B^s_{2,2}(\M)=H^s(\M)$, and more generally for any $2\le p <\infty$ and $\eps>0$ we have
\begin{align*}
\|u\|_{B^s_{p,\infty}(\M)}\les \|u\|_{W^{s,p}(\M)}\les \|u\|_{B^s_{p,2}(\M)} \les \|u\|_{B^{s+\eps}_{p,\infty}(\M)}.
\end{align*}
\textup{(ii)} Let $s\in\R$ and $1\leq p_1\leq p_2\leq \infty$ and $q\in [1,\infty]$. Then for any $f\in B^s_{p_1,q}(\M)$ we have
\begin{align*}
\|f\|_{B^{s-d\big(\frac1{p_1}-\frac1{p_2}\big)}_{p_2,q}(\M)}\les \|f\|_{B^s_{p_1,q}(\M)}.
\end{align*}
\textup{(iii)} Let $\alpha,\beta\in \R$ with $\alpha+\beta>0$ and $p_1,p_2,q_1,q_2\in [1,\infty]$ with
\begin{align*}
\frac1p = \frac1{p_1}+\frac1{p_2} \qquad\text{ and }\qquad \frac1q=\frac1{q_1}+\frac1{q_2}.
\end{align*}
 Then for any $f\in B^{\alpha}_{p_1,q_1}(\M)$ and $g\in B^{\beta}_{p_2,q_2}(\M)$, we have $fg \in B^{\alpha\wedge \beta}_{p,q}(\M)$, and moreover it holds
\begin{align*}
\|fg\|_{B^{\alpha\wedge\beta}_{p,q}(\M)}\les \|f\|_{B^{\alpha}_{p_1,q_1}(\M)}\|g\|_{B^{\beta}_{p_2,q_2}(\M)}.
\end{align*}
\end{corollary}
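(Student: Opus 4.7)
The strategy is to use the partition of unity $(\chi_j)_{j \in \J}$ associated to the atlas together with the local equivalence of Besov norms established in Proposition~\ref{prop-norm} to reduce every statement to its Euclidean counterpart, which is classical. Since $\J$ is finite, summation over charts contributes only a harmless multiplicative constant, and each compactly supported pull-back $\kk_j^\star(\chi_j) \in C^\infty_0(\R^d)$ acts boundedly on every Besov space on $\R^d$ as a smooth bump multiplier.

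For part (i), the identification $B^s_{2,2}(\M) = H^s(\M)$ will be proved directly at the spectral level: using the orthonormality of $\{\varphi_n\}$ in $L^2(\M)$,
$$\|(\P_N - \P_{N/2}) u\|_{L^2(\M)}^2 = \sum_n |\psi_{N^2}(\lambda_n^2)|^2 \,|\langle u, \varphi_n\rangle|^2,$$
and since for each $n$ only finitely many multipliers $\psi_{N^2}(\lambda_n^2)$ are nonzero with $N \sim \jb{\lambda_n}$ on their supports, exchanging the sum over dyadic $N$ with the sum over $n$ yields $\|u\|_{B^s_{2,2}(\M)}^2 \sim \sum_n \jb{\lambda_n}^{2s}|\langle u,\varphi_n\rangle|^2 = \|u\|_{H^s(\M)}^2$. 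The remaining chain of embeddings for general $2 \leq p < \infty$ is transferred from the standard Euclidean Littlewood--Paley characterization: writing $u = \sum_j \chi_j u$, applying the left-hand side of Proposition~\ref{prop-norm} to each piece, invoking the known inequalities
$$\|v\|_{B^s_{p,\infty}(\R^d)} \les \|v\|_{W^{s,p}(\R^d)} \les \|v\|_{B^s_{p,2}(\R^d)} \les \|v\|_{B^{s+\eps}_{p,\infty}(\R^d)}$$
on each chart, and closing with the right-hand side of Proposition~\ref{prop-norm}.

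For parts (ii) and (iii), the same recipe applies. For (ii), writing $f = \sum_j \chi_j f$, Proposition~\ref{prop-norm} combined with the Euclidean Besov embedding gives
$$\|\chi_j f\|_{B^{s-d(1/p_1-1/p_2)}_{p_2,q}(\M)} \les \|\kk_j^\star(\chi_j f)\|_{B^{s-d(1/p_1-1/p_2)}_{p_2,q}(\R^d)} \les \|\kk_j^\star(\chi_j f)\|_{B^s_{p_1,q}(\R^d)} \les \|f\|_{B^s_{p_1,q}(\M)},$$
followed by summation over the finite index set $\J$. For (iii), writing $fg = \sum_j \chi_j fg$ and localizing each factor with a fattened cutoff $\widetilde\chi_j \equiv 1$ on $\supp \chi_j$, Proposition~\ref{prop-norm} yields
$$\|\chi_j fg\|_{B^{\alpha\wedge\beta}_{p,q}(\M)} \les \|\kk_j^\star(\chi_j)\cdot\kk_j^\star(\widetilde\chi_j f)\cdot\kk_j^\star(\widetilde\chi_j g)\|_{B^{\alpha\wedge\beta}_{p,q}(\R^d)}.$$
The hypothesis $\alpha+\beta>0$ then allows one to apply the classical fractional Leibniz rule on $\R^d$ and to absorb the smooth compactly supported multiplier $\kk_j^\star(\chi_j)$, producing a bound by $\|f\|_{B^\alpha_{p_1,q_1}(\M)}\|g\|_{B^\beta_{p_2,q_2}(\M)}$ after a final application of Proposition~\ref{prop-norm}. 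Since all the non-trivial analytic content has already been absorbed into Proposition~\ref{prop-norm}, I do not anticipate any serious obstacle here: the only care required is in the bookkeeping with cut-off functions and in verifying that $\kk_j^\star(\chi_j)$ acts as a bounded Besov multiplier on $\R^d$, which is routine for smooth compactly supported symbols.
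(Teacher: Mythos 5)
Your argument for part~(iii) is the same as the paper's, and your argument for part~(ii) (localize via Proposition~\ref{prop-norm}, invoke the Euclidean Besov embedding, then transport back) is a valid alternative to the paper's more direct proof, which applies the Bernstein inequality of Corollary~\ref{lem-boundedness-psi-delta} to $(\P_N-\P_{N/2})f$ without any localization; both routes are correct, with the paper's being shorter. Your direct spectral identification of $B^s_{2,2}(\M)$ with $H^s(\M)$ is also fine.

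The gap is in the remaining chain of inequalities in part~(i). Two of the three inequalities in
\begin{align*}
\|u\|_{B^s_{p,\infty}(\M)}\les \|u\|_{W^{s,p}(\M)}\les \|u\|_{B^s_{p,2}(\M)} \les \|u\|_{B^{s+\eps}_{p,\infty}(\M)}
\end{align*}
involve the Sobolev norm $\|u\|_{W^{s,p}(\M)}=\|(1-\Dlg)^{s/2}u\|_{L^p(\M)}$, which is defined through the \emph{global} functional calculus. Proposition~\ref{prop-norm}, which you lean on, is a statement about \emph{Besov} norms only; it gives you no comparison between $\|\kk_j^\star(\chi_j u)\|_{W^{s,p}(\R^d)}$ and $\|u\|_{W^{s,p}(\M)}$. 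So after you use the Euclidean inequality $\|\kk_j^\star(\chi_j u)\|_{B^s_{p,\infty}(\R^d)}\les\|\kk_j^\star(\chi_j u)\|_{W^{s,p}(\R^d)}$, you have no way to close back to $\|u\|_{W^{s,p}(\M)}$ without a Sobolev analogue of Proposition~\ref{prop-norm}, i.e.\ a local characterization of the operator $(1-\Dlg)^{s/2}$. That is a genuinely nontrivial additional step (one would have to run a variant of the proof of Proposition~\ref{prop-norm} for the symbol class $\S^{s}$ rather than $\S^{-\infty}$, handling the commutator errors). The paper avoids this entirely: the first inequality is obtained by applying Corollary~\ref{lem-boundedness-psi-delta} (with $p=q$) to the $L^p$-bounded multiplier $\psi_{N^2}(-\Dlg)(1-\Dlg)^{-s/2}\sim N^{-s}$, and the middle inequality $\|u\|_{W^{s,p}(\M)}\les\|u\|_{B^s_{p,2}(\M)}$ is precisely the square-function estimate of \cite[Corollary~2.3]{BGT}, which is a separate spectral-multiplier input and not a consequence of Proposition~\ref{prop-norm}. (Only the last inequality, $\|u\|_{B^s_{p,2}}\les\|u\|_{B^{s+\eps}_{p,\infty}}$, is purely a Besov--Besov statement, and there your localization works — though a one-line direct summation in $N$ is even simpler.) You should either restrict the localization argument to the Besov--Besov inequality and prove the two Sobolev inequalities by the multiplier/square-function route, or else state and justify the Sobolev analogue of Proposition~\ref{prop-norm}.
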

\begin{proof}
The first estimate in (i) is a direct consequence of the boundedness of $(\P_N-\P_{N/2})$ provided by Corollary \ref{lem-boundedness-psi-delta}, whereas the second one follows from the square function estimate given in \cite[Corollary 2.3]{BGT}, and the last one from Cauchy-Schwarz inequality. Similarly, (ii) follows directly from Corollary \ref{lem-boundedness-psi-delta}. 

For the product rule (iii), we take a partition of unity $\{\chi_j\}_{j\in\J}$ and a fattened version $\{\widetilde{\chi}_j\}_{j\in\J}$, so that using Proposition~\ref{prop-norm}, we have
\begin{align*}
\|fg\|_{B^{\alpha\wedge\beta}_{p,q}(\M)}\les \sum_{j\in\J}\|\kk_j^{\star}(\chi_jf\cdot\widetilde{\chi}_jg)\|_{B^{\alpha\wedge\beta}_{p,q}(\R^d)}.
\end{align*}
Then using the standard product rule for Besov spaces on $\R^d$ (see \cite{BCD}, using the paraproduct estimates of Theorems 2.82 and 2.85), we can estimate the term above with
\begin{align*}
\sum_{j\in\J}\|\kk_j^{\star}(\chi_jf)\|_{B_{p_1,q_2}^{\alpha}(\R^d)}\|\kk_j^{\star}(\widetilde{\chi}_jg)\|_{B^{\beta}_{p_2,q_2}(\R^d)}.
\end{align*}
We can then use the finiteness of $\J$ along with Proposition~\ref{prop-norm} to conclude.
\end{proof}
\section{Probabilistic estimates}\label{sec-proba}
\subsection{Probabilistic tools and construction of the Gibbs measure}
We recall briefly here some basic probabilistic estimates and the outline of the construction of the Gibbs measure. A fully detailed construction on a $2d$-manifold can be found in \cite{OT1} in the context of the nonlinear Schr\"odinger equation, which, up to replacing the Laguerre polynomials used in \cite{OT1} with the Hermite polynomials, can be adapted in a straightforward manner to treat the invariant measure for \eqref{SDNLW} and \eqref{NLW}.

Let us first recall a few facts about the Hermite polynomials $H_k(x;\s)$. They are defined through the generating function
\begin{align*}e^{tx-\s\frac{t^2}{2}}=\sum_{k\geq 0}\frac{t^k}{k!}H_k(x;\s),\end{align*}
for any $t,x\in\R$. When $\s=1$ we simply write $H_k(x;1)=H_k(x)$, and we have the scaling property \begin{align*}H_k(x;\s)=\s^{\frac{k}{2}}H_k(\s^{-\frac12}x).\end{align*}
Moreover, the following formula hold:
\begin{align}\label{Hsum}
H_k(x+y;\s) = \sum_{\ell=0}^k {\binom k \ell} H_{\ell}(x;\s)y^{k-\ell}.
\end{align}
and
\begin{align}\label{Hdif}
\dx H_k(x;\s) = kH_{k-1}(x;\s).
\end{align}
Now if we define the (spatial) white noise on $\M$
\begin{align*}\xi_0=\sum_{n\geq 0}g_n\varphi_n,\end{align*}
where $g_n$ are as in \eqref{def-mu}, then we can define the white noise functional to be the action of the distribution $\xi_0$ extended to $L^2$ functions, i.e.
\begin{equation*}
\W: f\in L^2(\M)\longmapsto \W_f = \langle f,\xi_0\rangle_{L^2(\M)}\in L^2(\O).
\end{equation*}
It is easy to see that $\W$ is unitary, and moreover we have the relation
\begin{align}\label{WNF-perp}
\E\big[H_k(\W_f)H_{\ell}(\W_g)\big] = \dl_{k,\ell}k!\langle f,g\rangle_{L^2(\M)}^k,
\end{align}
for any $f,g$ normalized $L^2$ functions, where $\dl_{k,\ell}$ stands for Kronecker's delta function.

As in \cite{GKO}, we also have the following lemma.
\begin{lemma}
Let $f,g$ be centered jointly Gaussian random variables with variances $\s_f$ and $\s_g$, then
\begin{align}\label{prod-wick}
\E\big[H_k(f;\s_f)H_{\ell}(g;\s_g)\big] = \delta_{k,\ell}k!\E\big[fg\big]^k.
\end{align}
\end{lemma}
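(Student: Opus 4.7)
The plan is to reduce to the case of standard (unit-variance) Gaussians via the scaling property $H_k(x;\s) = \s^{k/2} H_k(\s^{-1/2}x)$ recalled above, and then establish the normalized identity using the Hermite generating function together with the bivariate Gaussian moment generating function. The statement is essentially the extension of \eqref{WNF-perp} (which covers the unit-variance white-noise case) to arbitrary jointly Gaussian pairs, and the argument is classical.

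First, I would set $X = f/\sqrt{\s_f}$ and $Y = g/\sqrt{\s_g}$, which are centred, jointly Gaussian, with unit variances and covariance $\rho = \E[fg]/\sqrt{\s_f \s_g}$. The scaling relation yields
\begin{align*}
\E\big[H_k(f;\s_f) H_\ell(g;\s_g)\big] = \s_f^{k/2}\s_g^{\ell/2}\,\E\big[H_k(X)H_\ell(Y)\big],
\end{align*}
so it suffices to show $\E[H_k(X)H_\ell(Y)] = \dl_{k,\ell}\,k!\,\rho^k$, from which the claimed identity follows by multiplying through with $\s_f^{k/2}\s_g^{k/2} = (\s_f\s_g)^{k/2}$ and cancelling with $\rho^k = \E[fg]^k/(\s_f\s_g)^{k/2}$.

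For the normalized identity, I would compute the joint generating function. Using the Hermite generating function $e^{tx - t^2/2} = \sum_{k\ge 0} \frac{t^k}{k!} H_k(x)$ and the Gaussian MGF $\E[e^{tX + sY}] = \exp\bigl(\tfrac12(t^2 + 2\rho ts + s^2)\bigr)$, one obtains
\begin{align*}
\E\big[e^{tX - t^2/2}\,e^{sY - s^2/2}\big] = e^{\rho ts} = \sum_{n\ge 0} \frac{(\rho ts)^n}{n!}.
\end{align*}
Expanding the left-hand side as a double power series in $(t,s)$ and matching coefficients of $t^k s^\ell$ yields $\E[H_k(X)H_\ell(Y)] = \dl_{k,\ell}\, k!\, \rho^k$. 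The only technical point is justifying the interchange of expectation and summation, which is routine by Fubini's theorem since for $(t,s)$ small the double series converges absolutely in $L^1(\P)$ thanks to the Gaussian tails of $X$ and $Y$. I do not anticipate any serious obstacle in this proof; it is purely a computational check once the scaling reduction is made.
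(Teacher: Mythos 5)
Your proof is correct. The paper itself does not give a proof but merely cites Simon's book \cite[Theorem I.3]{Simon}, so your self-contained argument via the generating function fills in what the paper leaves to a reference. Your reduction to unit-variance Gaussians by the scaling identity, followed by the computation $\E[e^{tX-t^2/2}e^{sY-s^2/2}] = e^{\rho ts}$ and coefficient-matching, is the canonical argument and is exactly what one would find in Simon. One small remark: the lemma as stated does not explicitly say that $f,g$ are centred and jointly Gaussian, but both hypotheses are needed (as you implicitly use in writing down the bivariate MGF and in having $\E[fg]$ determine the joint law), and in the paper's usage they always hold; it would be worth flagging these standing assumptions explicitly. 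The Fubini justification for interchanging expectation and the double sum is, as you say, routine for small $(t,s)$, and since the identity among coefficients is then established it holds for all $k,\ell$.
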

\noi
See \cite[Lemma 1.1.1]{Nualart}.

Now, if we then define the real-valued random variables $G_{N,k+1}$ on $(H^s(\M),\mu_0)$ as
\begin{align*}
G_{N,k+1}(u_0) = \frac{1}{k+1}\int_{\M} 
:\!  (\P_N u_0)^{k+1} (x)\!: dx ,
\end{align*} 
then we have the following lemma.
\begin{lemma}\label{lem-Gibbs}
Let $G_{N,k+1}$ be the random variable on $(H^s(\M),\mu_0)$ defined above.\\
\noi
\textup{(i)} $\{G_{N,k+1}\}_{N\in\N}$ is a Cauchy sequence in $L^p(\mu_0)$ for any finite $p\geq 1$, thus converging to some $G_{k+1}\in L^p(\mu_0)$,\\
\noi
\textup{(ii)} $e^{-G_{N,k+1}}$ converges to $e^{-G_{k+1}}$ almost surely and in $L^p(\mu_0)$ for any finite $p\geq 1$.
\end{lemma}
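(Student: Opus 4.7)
The plan is to first establish the $L^2(\mu_0)$ convergence of $\{G_{N,k+1}\}$ via a direct variance computation, then upgrade to $L^p(\mu_0)$ by hypercontractivity (Wiener chaos estimates), and finally pass from $L^p$ convergence of $G_{N,k+1}$ to $L^p$ convergence of $e^{-G_{N,k+1}}$ via almost-sure convergence plus a Nelson-type uniform integrability bound.

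First, I would compute $\E_{\mu_0}\bigl[(G_{N,k+1}-G_{M,k+1})^2\bigr]$ for $M\le N$. Writing the difference of Wick monomials via \eqref{Hsum}--\eqref{Hdif} and using the moment identity \eqref{prod-wick}, the cross terms reduce to integrals of powers of the truncated covariance kernel
\begin{equation*}
G_N(x,y) \deff \E\bigl[\P_N u_0(x)\,\P_N u_0(y)\bigr] = \sum_{n\ge 0} \psi_0(N^{-2}\lambda_n^2)^2 \frac{\varphi_n(x)\varphi_n(y)}{\jb{\lambda_n}^2}.
\end{equation*}
Orthogonality of distinct Hermite polynomials collapses everything to
\begin{equation*}
\E_{\mu_0}\bigl[(G_{N,k+1}-G_{M,k+1})^2\bigr] = \frac{k!}{(k+1)}\iint_{\M\times\M} \Bigl(G_N(x,y)^{k+1}-2G_N(x,y)^{\cdot}G_M(x,y)^{\cdot}+G_M(x,y)^{k+1}\Bigr)\,dx\,dy,
\end{equation*}
where the middle term has the natural paired form. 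The key analytic input is the pointwise/integral Green's function bound announced as Proposition~\ref{prop-Green}, which gives $|G_N(x,y)|\les |\log d_\g(x,y)|+1$ uniformly in $N$ with $d_\g$ the geodesic distance, together with controlled convergence $G_N\to G$ off the diagonal. Since $\int_{\M\times \M}|\log d_\g(x,y)|^{k+1}dx\,dy<\infty$ on a compact surface, the dominated convergence theorem yields $\|G_{N,k+1}-G_{M,k+1}\|_{L^2(\mu_0)}\to 0$.

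Second, since each $G_{N,k+1}-G_{M,k+1}$ belongs to the finite sum of Wiener chaoses of order at most $k+1$ (it is a polynomial of degree $k+1$ in the Gaussians $g_n$), hypercontractivity (Nelson's estimate on the homogeneous Wiener chaos, see e.g.~\cite[Theorem~I.22]{Simon}) gives
\begin{equation*}
\|G_{N,k+1}-G_{M,k+1}\|_{L^p(\mu_0)} \les (p-1)^{\frac{k+1}{2}}\,\|G_{N,k+1}-G_{M,k+1}\|_{L^2(\mu_0)}
\end{equation*}
for any finite $p\ge 2$. Hence $\{G_{N,k+1}\}$ is Cauchy in $L^p(\mu_0)$ and converges to a limit $G_{k+1}\in L^p(\mu_0)$, proving (i). Choosing a fast enough subsequence $N_j$ and applying Borel--Cantelli to the tail bounds provided by this $L^p$ convergence yields $\mu_0$-almost sure convergence of $G_{N_j,k+1}$ to $G_{k+1}$, hence of $e^{-G_{N_j,k+1}}$ to $e^{-G_{k+1}}$ pointwise; a standard diagonal argument upgrades this to convergence of the whole sequence along any subsequence.

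Third, for (ii) the almost-sure convergence is immediate from the previous step, but the $L^p(\mu_0)$-convergence of the exponentials requires uniform integrability, which is the delicate point. The plan is to establish a Nelson-type bound
\begin{equation*}
\sup_{N\in\N}\bigl\|e^{-G_{N,k+1}}\bigr\|_{L^q(\mu_0)} <\infty \qquad\text{for all finite } q\ge 1,
\end{equation*}
by truncating the Gaussian field at a dyadic scale $M\le N$ and separately estimating the contribution of low modes (controlled by hypercontractivity and the $L^2$ bound on $G_{M,k+1}$) and of high modes (which, by \eqref{Hsum}, yields a manageable remainder upon optimising in $M$ as in the standard $\Phi^{k+1}_2$ construction on $\T^2$, see~\cite{OT1,GKO} and the outline at the start of Section~\ref{sec-proba}). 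Combining the uniform $L^q$-bound with Cauchy--Schwarz and the almost-sure convergence of $e^{-G_{N,k+1}}$ to $e^{-G_{k+1}}$ produces $L^p$-convergence for every finite $p\ge 1$ via the uniform-integrability criterion. The main obstacle is precisely this last Nelson bound: the $L^2$ cancellation argument used in (i) is insufficient because $G_{N,k+1}$ has no definite sign, so one cannot avoid the quantitative large-deviation/chaos decomposition used to obtain exponential integrability uniformly in $N$.
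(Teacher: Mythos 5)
Your overall architecture coincides with the paper's: part (i) is obtained from an $L^2(\mu_0)$ covariance computation upgraded to $L^p$ by the Wiener chaos estimate \eqref{Wiener-chaos}, and part (ii) from almost sure convergence plus a Nelson-type uniform integrability bound, which the paper does not reprove but outsources to \cite[Proposition 4.5]{OT1}. So the route is the same; the issues are in the inputs you invoke. First, the bound $|\gamma_N(x,y)|\les 1+|\log d(x,y)|$ uniformly in $N$, together with off-diagonal convergence, is \emph{not} what Proposition~\ref{prop-Green} says: that proposition controls $(1-\Dlg)_{\x_2}^{-\eps/2}(\gamma_N^k)$ in $L^\infty(\M\times\M)$ and gives $N^{-\tilde\eps}$ difference estimates, not a pointwise logarithmic kernel bound. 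The pointwise bound is true but would need its own proof in this setting, so as written your $L^2$ step rests on an unproven input. The paper's intended computation instead uses the orthogonality relation \eqref{WNF-perp} to reduce $\|G_{N,k+1}-G_{M,k+1}\|_{L^2(\mu_0)}^2$ to integrals of $(k+1)$-st powers of the (mixed) truncated kernels, and then controls these via Lemma~\ref{LEM:ev} (e.g.\ bounding $\sup_{\x}\|\gamma_N(\x,\cdot)\|_{L^q}$ through $H^{1-}$ in the second variable, summed over unit spectral bands), or alternatively one could integrate the smoothed estimates \eqref{estim-Green}--\eqref{estim-Green-diff2} of Proposition~\ref{prop-Green} as actually stated. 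Either repair is routine, but your appeal to Proposition~\ref{prop-Green} in its announced form is a misattribution.

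Second, for (ii) you correctly identify uniform exponential integrability as the delicate point and sketch the standard low/high frequency (Nelson) decomposition, but you never say where the sign structure enters, and it is essential: the bound $\sup_N\|e^{-G_{N,k+1}}\|_{L^q(\mu_0)}<\infty$ holds only because $k$ is odd, so that $k+1$ is even and $H_{k+1}(x;\s)\ge -C\s^{\frac{k+1}{2}}$, giving the deterministic lower bound $G_{N,k+1}\ge -C(\log N)^{\frac{k+1}{2}}$ that drives Nelson's argument; for even $k$ the statement is simply false (this is the focusing obstruction of \cite{BS} noted in the paper). Since your plan is only an outline of the argument of \cite{OT1,GKO} and omits this crucial hypothesis, the proof of (ii) is incomplete as stated, even though it is the right strategy and matches what the paper itself delegates to \cite[Proposition 4.5]{OT1}.
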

This last convergence result allows to define the Gibbs measure $\rho_{k+1}$ as the limit in total variation of $Z_N^{-1}e^{-G_{N,k+1}}d\mu$.

The proof of (i) for $p=2$ follows from a direct computation using \eqref{WNF-perp} and Lemma \ref{LEM:ev}, and for $p>2$ it is a consequence of the case $p=2$ along with the following Wiener chaos estimate (see \cite{Simon}):
\begin{lemma}
Let $d,m\in\N$ and $Q(X_1,...,X_m)$ be a polynomial of degree $d$ in $m$ variables. Let $\{g_n\}$ be as in \eqref{def-mu}. Then for any $p\geq 2$ we have
\begin{align}\label{Wiener-chaos}
\|Q(g_1,...,g_m)\|_{L^p(\O)}\leq (p-1)^{\frac{d}{2}}\|Q(g_1,...,g_m)\|_{L^2(\O)}.
\end{align}
\end{lemma}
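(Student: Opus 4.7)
The plan is to reduce the claim to Nelson's hypercontractivity for the Ornstein--Uhlenbeck semigroup on Gauss space, which is the standard sharp form of the Wiener chaos inequality. Let $\{g_n\}$ be the Gaussian sequence from \eqref{def-mu}, write $T_t = e^{-tL}$ for the associated OU semigroup, and let $\{\H_k\}_{k\ge 0}$ be the orthogonal Wiener chaos decomposition of $L^2(\O)$, where $\H_k$ is the closed $L^2(\O)$-linear span of tensor Hermite products $\prod_i H_{\alpha_i}(g_i)$ with total degree $\sum_i\alpha_i = k$. Orthogonality of the chaoses in $L^2(\O)$ follows from \eqref{prod-wick}, and by construction $T_t$ acts on $\H_k$ as multiplication by $e^{-kt}$. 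Nelson's theorem asserts that $T_t : L^2(\O) \to L^p(\O)$ is a contraction as soon as $e^{2t} \ge p-1$; for the present paper I would simply cite this from \cite{Simon} (the standard route being the two-point Bonami--Beckner inequality combined with tensorization over the independent Gaussians).

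Given these two inputs, what remains is a short algebraic manipulation. Since $Q(g_1,\dots,g_m)$ is a polynomial of degree at most $d$, its chaos expansion truncates at level $d$, so $Q = \sum_{k=0}^{d} Q_k$ with $Q_k\in\H_k$. Fix $t>0$ with $e^{2t}=p-1$ and set
\begin{equation*}
R := \sum_{k=0}^{d} e^{kt} Q_k,
\end{equation*}
so that $T_t R = Q$ by the spectral action of $T_t$ on each chaos. Orthogonality of the $\H_k$ in $L^2(\O)$ then gives
\begin{equation*}
\|R\|_{L^2(\O)}^2 = \sum_{k=0}^{d} e^{2kt}\|Q_k\|_{L^2(\O)}^2 \le e^{2dt}\sum_{k=0}^{d}\|Q_k\|_{L^2(\O)}^2 = (p-1)^d\,\|Q\|_{L^2(\O)}^2,
\end{equation*}
and Nelson's contraction property yields
\begin{equation*}
\|Q(g_1,\dots,g_m)\|_{L^p(\O)} = \|T_t R\|_{L^p(\O)} \le \|R\|_{L^2(\O)} \le (p-1)^{d/2}\,\|Q(g_1,\dots,g_m)\|_{L^2(\O)}.
\end{equation*}

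There is no real obstacle: the deep content is entirely inside Nelson's theorem, which is a purely Gaussian statement involving no geometry of $\M$ and is standard textbook material. The conceptually important feature of the bound---and the reason it is recorded at this point in the paper---is that the constant depends only on the polynomial degree $d$ and on $p$, independently of the number $m$ of underlying Gaussians or of any truncation parameter $N$. This is precisely what is exploited in Section~\ref{sec-proba} to promote $L^2(\O)$-estimates on the Wick-ordered stochastic objects to $L^p(\O)$-estimates for all finite $p\ge 2$, and thereby to almost-sure convergence statements via Chebyshev plus Borel--Cantelli.
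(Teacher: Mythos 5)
Your proof is correct and follows essentially the same route as the paper, which simply records the lemma as a consequence of Nelson's hypercontractivity of the Ornstein--Uhlenbeck semigroup (citing \cite{Nelson2} and \cite{Simon}) without writing out the details. Your chaos-decomposition argument is exactly the standard deduction implicit in that citation, so there is nothing to add.
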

This lemma is itself a consequence of the hypercontractivity of Ornstein-Uhlenbeck's semi-group \cite{Nelson2}. As for Lemma \ref{lem-Gibbs} (ii), it then follows from the same argument as in \cite[Proposition 4.5]{OT1}.

As explained in the introduction, Lemma~\ref{lem-Gibbs} allows us to define the Gibbs measure $\rho_{k+1}$ on $\H^s(\M)$ by the formula \eqref{def-rhok}. In particular, $\rho_{k+1}\ll\mu$ as $e^{-G_{k+1}}$ is a finite positive random variable, so that $\supp \rho_{k+1} = \supp \mu \subset \H^s(\M)$ for any $s<0$ but $\rho_{k+1}(\H^0)=0$.

\subsection{Stochastic estimates for (\ref{SDNLW}) and (\ref{NLW})}
Now we move onto the construction of the Wick ordered monomials $:\Psi_{\damp}^k:$ and their large deviation bounds. We first deal with the stochastic objects for \eqref{NLW}, and so we recall that $z_N=\P_NS(t)(u_0,u_1)$ is the truncated linear solution with the random initial data $(u_0,u_1)$ given in \eqref{def-mu}.
\begin{proposition}\label{prop-z}
For any $k\geq 1$, $T>0$, $0<\eps \ll 1$ and $1\leq p,q<\infty$\footnote{Unlike when $\M=\T^2$, it is not as straightforward to get the convergence of $H_\ell\big(z_N;\s_N(x)\big)$ in $C\big([0,T];W^{-\eps,\infty}(\M)\big)$ almost surely when $\ell\ge 2$, which prevents us from taking $q=\infty$. See also Remark \ref{rk-FIO} below.}, the random variables $\big\{H_k\big(\P_NS(t)(u_0,u_1);\s_N(x)\big)\big\}_{N\in \N}$ form a Cauchy sequence in $L^p\big(\mu;L^q([0,T];W^{-\eps,\infty}(\M))\big)$. Moreover, there exists $C>0$ such that for any $T,R>0$ and $N\in\N$ the following tail estimate holds:
\begin{align}\label{tail-z}
\mu\Big(\big\|H_k\big(\P_NS(t)(u_0,u_1);\s_N(x)\big)\big\|_{L^q_TW^{-\eps,\infty}}>R\Big)\leq Ce^{-cR^{\frac2k}T^{-\frac2{qk}}}.
\end{align} Denoting the limit by $\,:\!z^k\!:\,$, it also holds $H_k\big(\P_NS(t)(u_0,u_1);\s_N(x)\big)\rightarrow \,:\!z^k\!:\,$ in $L^q([0,T],W^{-\eps,\infty}(\M))$,  $\mu$-almost surely, and $\,:\!z^k\!:\,$ also satisfies the tail estimate \eqref{tail-z}. Moreover, for $k=1$ we have $z\in C\big([0,T];W^{-\eps,\infty}(\M)\big)\cap C^1\big([0,T],W^{-1-\eps,\infty}(\M)\big) $, $\mu$-almost surely, for any $\eps>0$. Lastly, we also have the following tail estimate for the convergence:
\begin{align}
 \mu\Big(\big\|H_k\big(\P_{N_1}S(t)(u_0,u_1);\s_{N_1}(x)\big)&-H_k\big(\P_{N_2}S(t)(u_0,u_1);\s_{N_2}(x)\big)\big\|_{L^q_TW^{-\eps,\infty}}>R\Big)\notag\\
 &\qquad\qquad\leq Ce^{-cN_1^{\widetilde{\eps}}R^{\frac2k}T^{-\frac2{qk}}},
 \label{tail-z-zN}
 \end{align}
 for some $0<\wt\eps\ll\eps$ and any $N_2\ge N_1$.
\end{proposition}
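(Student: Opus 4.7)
The overall plan is to reduce all bounds to second-moment computations via Wiener chaos, then exploit the Gaussian structure of $z_N$ through the product formula \eqref{prod-wick} together with the kernel description of Littlewood--Paley projectors furnished by the pseudo-differential calculus of Section~\ref{SEC:PDO}. Writing
\begin{align*}
z_N(t,x)=\sum_n\psi_0(N^{-2}\lambda_n^2)\Big[\cos(t\jb{\lambda_n})\frac{g_n}{\jb{\lambda_n}}+\sin(t\jb{\lambda_n})\frac{h_n}{\jb{\lambda_n}}\Big]\varphi_n(x),
\end{align*}
the instantaneous covariance
\begin{align*}
\Gamma_N(y,y')\deff\E[z_N(t,y)z_N(t,y')]=\sum_n\psi_0(N^{-2}\lambda_n^2)^2\frac{\varphi_n(y)\varphi_n(y')}{\jb{\lambda_n}^2}
\end{align*}
is independent of $t$ and is precisely a truncation of the Green's function of $1-\Dlg$, which is the object estimated in Proposition~\ref{prop-Green}.

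The core deterministic reduction is the following: for a dyadic projector $\wt\P_M\deff\P_M-\P_{M/2}$ with kernel $K_M(x,y)$ (which, by Proposition~\ref{prop-dvp}, is a semi-classical $\Psi$DO with parameter $M^{-1}$), the product formula \eqref{prod-wick} gives
\begin{align*}
\E\big|\wt\P_M H_k(z_N(t);\s_N)(x)\big|^2 = k!\iint K_M(x,y)K_M(x,y')\,\Gamma_N(y,y')^k\,dy\,dy'.
\end{align*}
Proposition~\ref{prop-Green} applied to $\Gamma_N^k$, combined with the physical localization of $K_M$ on scales $M^{-1}$, yields a bound of the form $C_\eps M^{-\eps}$ for some $\eps>0$, uniformly in $N$ and $t$. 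The same computation run on the difference $H_k(z_{N_1})-H_k(z_{N_2})$ (expanded via \eqref{Hsum} as a telescopic sum of Hermite polynomials in $z_{N_1}$ and $z_{N_2}-z_{N_1}$) produces an additional factor $N_1^{-\wt\eps}$ coming from the support of $\psi_0(N_1^{-2}\lambda^2)^2-\psi_0(N_2^{-2}\lambda^2)^2$ on frequencies $\gtrsim N_1$.

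Since $H_k(z_N;\s_N)$ lies in the $k$-th homogeneous Wiener chaos generated by the Gaussians $(g_n,h_n)$, the hypercontractivity estimate \eqref{Wiener-chaos} promotes the $L^2(\mu)$ bound to $L^p(\mu)$ at the cost of a factor $p^{k/2}$. Choosing $p$ large enough that $B^{-\eps/2}_{p,p}(\M)\hookrightarrow W^{-\eps,\infty}(\M)$, which follows from Corollaries~\ref{lem-boundedness-psi-delta} and~\ref{cor-besov}, Minkowski's inequality in $L^p_\omega$, $L^p_x$, $L^p_t$, together with the trivial embedding $L^p_T\hookrightarrow L^q_T$ for $p\ge q$, yields the $L^p(\mu;L^q_TW^{-\eps,\infty})$ bound on $H_k(z_N)$, and applied to the difference gives the Cauchy property. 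Chebyshev's inequality combined with optimization of $p$ against the $p^{k/2}$ factor produces the tail estimates \eqref{tail-z} and \eqref{tail-z-zN}; the extra gain $N_1^{-\wt\eps}$ in the latter allows almost-sure convergence in $L^q_TW^{-\eps,\infty}$ via Borel--Cantelli along $N=2^j$.

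The $k=1$ continuity statement is handled separately: since $z_N$ is Gaussian, Kolmogorov's criterion in the Banach space $W^{-\eps,\infty}(\M)$ reduces matters to a H\"older-in-$t$ second-moment bound
\begin{align*}
\E\|z_N(t)-z_N(t')\|^{2}_{W^{-\eps,\infty}}\les |t-t'|^{2\alpha},\qquad 0<\alpha\ll\eps,
\end{align*}
which follows from the same Besov/pseudodifferential approach as above, using the H\"older bound $|\cos(t\jb{\lambda})-\cos(t'\jb{\lambda})|+|\sin(t\jb{\lambda})-\sin(t'\jb{\lambda})|\les |t-t'|^\alpha \jb{\lambda}^\alpha$ to absorb the extra time factor into the $\jb{\lambda_n}^{-2}$ decay of the covariance; the derivative $\dt z$ is treated identically after shifting regularity by $-1$. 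The main obstacle is the kernel estimate of the second paragraph: on $\T^2$ it is an elementary Fourier calculation, while on $\M$ one must combine the local pseudodifferential description of $\wt\P_M$ (Proposition~\ref{prop-dvp}) with the careful analysis of the diagonal singularity of $\Gamma_N$ carried out in Proposition~\ref{prop-Green}, which is the crucial technical novelty of this paper.
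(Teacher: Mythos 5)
Your proposal correctly identifies the paper's scheme: reduce the problem to second moments via the orthogonality relation \eqref{prod-wick}, observe that the equal-time covariance of $z_N$ is time-independent and coincides with a truncated Green's function $\gamma_N=\P_N^2\gamma$ (Lemma~\ref{lem-covar}), feed the power $\gamma_N^k$ into Proposition~\ref{prop-Green}, upgrade from $L^2(\mu)$ to $L^p(\mu)$ by hypercontractivity, derive the tail bounds by Chebyshev and optimization in $p$, and treat $k=1$ continuity by Kolmogorov. The presentational differences from the paper — working with Littlewood--Paley pieces $\wt\P_M$ and Besov norms rather than applying $(1-\Dlg)^{-\eps/4}$ and the Sobolev embedding $W^{-\eps/2,r_\eps}\hookrightarrow W^{-\eps,\infty}$, and telescoping the difference via \eqref{Hsum} rather than expanding $\E[(H_k(z_{N_1})-H_k(z_{N_2}))^2]$ directly from \eqref{prod-wick} — are legitimate variants.

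However, the central kernel estimate as stated is wrong. You assert that
\begin{align*}
\E\big|\wt\P_M H_k(z_N(t);\s_N)(x)\big|^2 = k!\iint K_M(x,y)K_M(x,y')\,\Gamma_N(y,y')^k\,dy\,dy' \les M^{-\eps}
\end{align*}
uniformly in $N$. This quantity does not decay in $M$. For $k=1$ the eigenbasis expansion gives exactly $\sum_n\psi_M(\lambda_n^2)^2\psi_0(N^{-2}\lambda_n^2)^2\varphi_n(x)^2\jb{\lambda_n}^{-2}$, which, by Lemma~\ref{LEM:ev} and Weyl's law, is of order $1$ for every $M\les N$ — no $M$-decay. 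For $k\ge 2$ it even grows slowly: Proposition~\ref{prop-Green} only gives $\|(1-\Dlg)_{\x_2}^{-\eps/2}\gamma_N^k\|_{L^\infty}\le C$, whence $\|\wt\P_M^{\x_2}\gamma_N^k\|_{L^\infty}\les M^{\eps}$, not $M^{-\eps}$. Had your bound been correct, $H_k(z_N)$ would converge in a \emph{positive}-regularity Besov space, which is impossible (and would make the renormalization superfluous). The $W^{-\eps,\infty}$ gain comes from the Besov weight $M^{-\eps}$ combined with the Bernstein loss $M^{2/r}$ when passing to $L^\infty_x$ — equivalently, from the paper's Sobolev embedding $W^{-\eps/2,r_\eps}\hookrightarrow W^{-\eps,\infty}$ — applied to a covariance integral that Proposition~\ref{prop-Green} shows to be bounded after one fractional smoothing, not decaying after a Littlewood--Paley cut. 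Once you weaken your claim from $O(M^{-\eps})$ to $O(M^{\eps'})$ for some $0<\eps'<\eps$ (or to $O(1)$ after a $(1-\Dlg)^{-\eps'/2}$ smoothing, as in the paper), the rest of your argument closes.
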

\begin{proof}
We begin by proving that ${\displaystyle H_k\big(\P_NS(t)(u_0,u_1);\s_N(x)\big)}$ is uniformly bounded in $L^p\big(\mu;L^q([0,T];W^{-\eps,\infty}(\M))\big)$. Note that it is enough to consider the case $p,q\ge 2$. In the following, we write $\x,\y$ for the space variables on $\M$ and $x,y$ for the points in $\R^2$. Let us start with the following lemma which collects the main properties of $\P_NS(t)(u_0,u_1)$ that we will use.
\begin{lemma}\label{lem-covar}
The measure $\mu$ is invariant under the transformation $(u_0,u_1)\mapsto \big(S(t)(u_0,u_1),\dt S(t)(u_0,u_1)\big)$, for any $t\in\R$. Moreover, if we define the (truncated) covariance function
\begin{align*}
\gamma_N(t_1,t_2,\x,\y)\deff\int_{\H^s(\M)}\big[\P_NS(t_1)(u_0,u_1)(\x)\P_NS(t_2)(u_0,u_1)(\y)\big]d\mu(u_0,u_1),
\end{align*}
then we have for any $(t,\x)\in \R\times\M$
\begin{align}
&\int_{\H^s(\M)}\big|(1-\Dlg)_\x^{-\frac\eps2}H_k\big(\P_NS(t)(u_0,u_1)(\x);\s_N(\x)\big)\big|^2d\mu(u_0,u_1)\notag\\ &\qquad\qquad=k!\big[(1-\Dlg)_{\x_1}^{-\frac\eps2}(1-\Dlg)_{\x_2}^{-\frac\eps2}\big(\gamma_N(t,t,\x_1,\x_2)^k\big)\big]\big|_{\x_1=\x_2=\x}.
\label{keypoint}
\end{align}
Lastly, we have the identity
\begin{align}
\gamma_N(\x,\y)\deff\gamma_N(t,t,\x,\y) = (\P_N\otimes\P_N)\gamma(\x,\y),
\label{covarGreen}
\end{align}
where $\gamma$ is the Green function for the Laplace-Beltrami operator on $\M$, i.e. $ \gamma$ is the kernel of $(1-\Dlg)^{-1}$:
\begin{align*}
\gamma(\x,\y)=\sum_{n\ge 0}\frac{\varphi_n(\x)\varphi_n(\y)}{\jb{\ld_n}^2}.
\end{align*}
\end{lemma}
Here the notation $(\P_N\otimes\P_N)\gamma(\x,\y)$ means that we apply $\P_N$ to both $\gamma(\cdot,\y)$ and $\gamma(\x,\cdot)$. Note that since $\gamma$ has a diagonal expansion on the basis $\varphi_n\otimes\varphi_{n'}$ of $L^2(\M\times\M)$, this is the same as $(\P_N^2\otimes\Id)\gamma$ or $(\Id\otimes\P_N^2)\gamma$.
\begin{proof}[Proof of Lemma~\ref{lem-covar}]
In order to prove the invariance, we first compute for $(u_0^{\o_0},u_1^{\o_1})$ given by \eqref{def-mu}:
\begin{align*}
S(t)(u_0^{\o_0},u_1^{\o_1}) = \sum_{n\geq 0}\frac{\varphi_n}{\jb{\lambda_n}}\big\{\cos(t\jb{\lambda_n})g_n(\o_0)+\sin(t\jb{\lambda_n})h_n(\o_1)\big\}=\sum_{n\geq 0}\frac{\varphi_n}{\jb{\lambda_n}}g_n^t(\o_0,\o_1),
\end{align*}
where for any $t\in\R$, $\{g_n^t\}_{n\ge 0}$ is a family of independent real-valued standard Gaussian random variables on $\O_0\times\O_1$, and similarly for $\dt S(t)(u_0^{\o_0},u_1^{\o_1})$. In particular this shows that if $(u_0,u_1)\sim \mu$ then for any $t\in\R$, $\big(S(t)(u_0,u_1),\dt S(t)(u_0,u_1)\big)\sim\mu$ too. 

Next, with the definition of the operator $(1-\Dlg)^{-\frac\eps2}$, we compute for any fixed $(t,\x)\in [0,T]\times\M$:
\begin{align*}
&\int_{\O_0}\int_{\O_1}\big|(1-\Dlg)_\x^{-\frac\eps2}H_k\big(\P_NS(t)(u_0^{\o_0},u_1^{\o_1})(\x);\s_N(\x)\big)\big|^2d\Prob_0d\Prob_1\\
&= \sum_{n,n'\in\N}\frac{\varphi_n(\x)\varphi_{n'}(\x)}{\jb{\lambda_n}^{\eps}\jb{\lambda_{n'}}^{\eps}}\int_{\M\times\M}\varphi_n(\x_1)\varphi_{n'}(\x_2)\\
&\times\E\Big[H_k\big(\P_NS(t)(u_0^{\o_0},u_1^{\o_1})(\x_1);\s_N(\x_1)\big)H_k\big(\P_NS(t)(u_0^{\o_0},u_1^{\o_1})(\x_2);\s_N(\x_2)\big)\Big]d\x_1d\x_2
\intertext{where the expectation is taken with respect to $\Prob_0\otimes\Prob_1$. We can then use \eqref{prod-wick} and the definition of $\gamma_N(t,t,\x_1,\x_2)$ to continue with}\\
&= \sum_{n,n'\in\N}\frac{\varphi_n(\x)\varphi_{n'}(\x)}{\jb{\lambda_n}^{\eps}\jb{\lambda_{n'}}^{\eps}}\int_{\M\times\M}k! \gamma_N(t,t,\x_1,\x_2)^k\varphi_n(\x_1)\varphi_{n'}(\x_2)d\x_1 d\x_2\\
&=k!\big[(1-\Dlg)_{\x_1}^{-\frac\eps2}(1-\Dlg)_{\x_2}^{-\frac\eps2}\big(\gamma_N(t,t,\x_1,\x_2)^k\big)\big]\big|_{\x_1=\x_2=\x}.
\end{align*}
This shows \eqref{keypoint}. 

As for \eqref{covarGreen}, in view of the definitions of the (truncated) covariance function $\gamma$ and of the propagator $S(t)$, we can compute
\begin{align}
&\gamma_N(t_1,t_2,\x,\y)\notag\\
&=\sum_{n_1,n_2\ge 0}\psi_0(N^{-2}\lambda_{n_1}^2)\psi_0(N^{-2}\lambda_{n_2}^2)\frac{\varphi_{n_1}(\x)\varphi_{n_2}(\y)}{\jb{\lambda_{n_1}}\jb{\lambda_{n_2}}} \int_{\O_0}\int_{\O_1}\Big[\big(\cos(t_1\jb{\lambda_{n_1}})g_{n_1}+\sin(t_1\jb{\lambda_{n_1}})h_{n_1}\big)\notag\\
&\qquad\qquad\times\big(\cos(t_2\jb{\lambda_{n_2}})g_{n_2}+\sin(t_2\jb{\lambda_{n_2}})h_{n_2}\big)\Big]d\Prob_0d\Prob_1\notag\\
&=\sum_{n\ge 0}\psi_0^2(N^{-2}\lambda_{n}^2)\frac{\varphi_{n}(\x)\varphi_{n}(\y)}{\jb{\lambda_{n}}^2}\cos\big((t_1-t_2)\jb{\lambda_{n}}\big).
\label{def-gamma}
\end{align}
The identity \eqref{covarGreen} thus follows from \eqref{def-gamma} by taking $t_1=t_2$.

\end{proof}
 Note that in order to estimate the right-hand side of \eqref{keypoint}, we do not need the smoothing in $\x_1$, and using Sobolev inequality in $\x_1$ with some (large) $p_{\eps}$ and the compactness of $\M$, we have
\begin{align}
&\sup_{\x\in\M}\big[(1-\Dlg)_{\x_1}^{-\frac\eps2}(1-\Dlg)_{\x_2}^{-\frac\eps2}\big(\gamma_N(\x_1,\x_2)^k\big)\big]\big|_{\x_1=\x_2=\x}\notag\\
& \les \|(1-\Dlg)_{\x_2}^{-\frac\eps2}\big(\gamma_N(\x_1,\x_2)^k\big)\|_{L^{p_\eps}(\M)\times L^{\infty}(\M)}\label{rk-gamma}\\
&\les  \|(1-\Dlg)_{\x_2}^{-\frac\eps2}\big(\gamma_N(\x_1,\x_2)^k\big)\|_{L^{\infty}(\M\times\M)}\notag.
\end{align}
The following proposition allows us to bound the powers of the covariance function $\gamma_N$, viewed through the identity \eqref{covarGreen}.
\begin{proposition}\label{prop-Green}
Let $\gamma_N: \M\times\M\rightarrow\R$ be the truncated Green function of the Laplace-Beltrami operator on $\M$ defined in \eqref{covarGreen}. Then for any $\eps>0$ and $k\in\N$, there exists $C=C(\eps,k)>0$ such that for any $N\in\N$,
\begin{align}\label{estim-Green}
\big\|(1-\Dlg)_{\x_2}^{-\frac\eps2}\big(\gamma_N(\x_1,\x_2)^k\big)\big\|_{L^{\infty}(\M\times\M)} \leq C<\infty.
\end{align}
Moreover, $\{\gamma_N^k\}_{N\in\N}$ defines a Cauchy sequence in 
\begin{align*}
W^{0,-\eps,\infty}(\M\times\M)=\big\{u\in\Dp(\M\times\M),~\|(1-\Dlg)_{\x_2}^{-\frac\eps2}u(\x_1,\x_2)\|_{L^{\infty}(\M\times\M)}<\infty\big\}
\end{align*} and satisfies
\begin{align}\label{estim-Green-diff}
\big\|(1-\Dlg)_{\x_2}^{-\frac\eps2}\big(\gamma_{N_1}(\x_1,\x_2)^k-\gamma_{N_2}(\x_1,\x_2)^k\big)\big\|_{L^{\infty}(\M\times\M)} \leq CN_1^{-\tilde{\eps}},
\end{align}
for any $N_1\leq N_2\in\N$ and some $0<\tilde{\eps}\ll\eps$ and $C>0$ independent of $N_1,N_2$.

Finally, if $\wt\P_N$ is defined similarly to $\P_N$ but with another cut-off $\wt\psi_0$ in place of $\psi_0$ with the same properties, then
\begin{align}\label{estim-Green-diff2}
\big\|(1-\Dlg)_{\x_2}^{-\frac\eps2}\big(\wt\P_N^2\gamma(\x_1,\x_2)^k-\P_N^2\gamma(\x_1,\x_2)^k\big)\big\|_{L^{\infty}(\M\times\M)} \leq CN^{-\tilde{\eps}}.
\end{align}
\end{proposition}
We postpone the proof of this proposition and finish the proof of Proposition~\ref{prop-z}. Now, for any finite $p\geq 1$, we first use Sobolev inequality to get for any $t\in\R$:
\begin{align*}
\big\|H_k\big(\P_NS(t)(u_0,u_1);\s_N(\x)\big)\big\|_{W^{-\eps,\infty} }\les \big\|H_k\big(\P_NS(t)(u_0,u_1);\s_N(\x)\big)\big\|_{W^{-\frac{\eps}{2},r_{\eps}} },
\end{align*}
for some $r_{\eps}\in [2,+\infty)$. Thus if $p\geq \max(q, r_{\eps})$, using Minkowski's inequality, the Wiener chaos estimate \eqref{Wiener-chaos} along with \eqref{keypoint}, \eqref{rk-gamma} and Proposition \ref{prop-Green} with the compactness of $\M$, we obtain
\begin{align*}
&\big\|H_k\big(\P_NS(t)(u_0,u_1)(\x);\s_N(\x)\big)\big\|_{ L^p_{\mu}L^{q}_TW^{-\eps,\infty} }\\&\les \Big\|\big\|(1-\Dlg)^{-\frac\eps2}H_k\big(\P_NS(t)(u_0,u_1)(\x);\s_N(\x)\big)\big\|_{L^p_\mu}\Big\|_{L^q_TL^{r_{\eps}} }\\
& \les p^{k/2} \Big\|\big\|(1-\Dlg)^{-\frac\eps2}H_k\big(\P_NS(t)(u_0,u_1)(\x);\s_N(\x)\big)\big\|_{L^2_\mu}\Big\|_{L^q_TL^{r_{\eps}}}\\
&= p^{k/2} \sqrt{k!}\big\|(1-\Dlg)_{\x_1}^{-\frac\eps2}(1-\Dlg)_{\x_2}^{-\frac{\eps}2}\big(\gamma_N(\x_1,\x_2)^k\big))\big|_{\x_1=\x_2=\x}\big\|_{L^{\frac{q}2}_TL^{\frac{r_{\eps}}2}_\x}^\frac12\\
&\les_k T^{1/q}p^{k/2}.
\end{align*}  
This proves that ${\displaystyle \big\{H_k\big(\P_NS(t)(u_0,u_1)(\x);\s_N(\x)\big)\big\}_{N\in\N}}$ is bounded in ${\displaystyle L^p\big(\mu;L^q([-T,T];W^{-\eps,\infty}(\M))\big)}$ for any finite $p,q\geq 1$ with $p$ large enough. Using then Chebyshev's inequality, we get that there is $C>0$ such that for any $p\geq 1$ and $R>0$
\begin{align*}
&\mu\Big(\big\|H_k\big(\P_NS(t)(u_0,u_1)(\x);\s_N(\x)\big)\big\|_{L^q_TW^{-\eps,\infty}}>R\Big) \\
&\qquad\leq R^{-p}\big\|H_k\big(\P_NS(t)(u_0,u_1)(\x);\s_N(\x)\big)\big\|_{L^p_{\mu} L^q_TW^{-\eps,\infty} }^p\le C^p p^{p\frac{k}2}T^{\frac{p}q}R^{-p},
\end{align*}
 and optimizing in $p$ leads to \eqref{tail-z}. 
 
 Now for any $N_1< N_2$, we can compute, similarly to \eqref{keypoint},
 \begin{align*}
& \int_{\H^s(\M)}\Big|(1-\Dlg)^{-\frac\eps2}\Big[H_k\big(\P_{N_1}S(t)(u_0,u_1)(\x);\s_{N_1}(\x)\big)\\
&\qquad\qquad-H_k\big(\P_{N_2}S(t)(u_0,u_1)(\x);\s_{N_2}(\x)\big)\Big]\Big|^2d\mu(u_0,u_1)\\ &=k!\big[(1-\Dlg)_{\x_1}^{-\frac\eps2}(1-\Dlg)_{\x_2}^{-\frac\eps2}\big(\P_{N_1}^2\gamma(\x_1,\x_2)^k-2\P_{N_1}\gamma(\x_1,\x_2)^k+\P_{N_2}^2\gamma(\x_1,\x_2)^k\big)\big]\big|_{\x_1=\x_2=\x},
\end{align*}
where we used that $\P_{N_2}\P_{N_1}=\P_{N_1}$ for $N_2> N_1$. Then \eqref{estim-Green-diff}-\eqref{estim-Green-diff2} in Proposition \ref{prop-Green} show that the sequence $\big\{H_k\big(\P_{N}S(t)(u_0,u_1)(\x);\s_{N}(\x)\big)\big\}_{N\in\N}$ defines a Cauchy sequence, thus converging to some $:z^k:$ in $L^p(\mu;L^q([-T,T];W^{-\eps,\infty}))$ and from the same argument as above with \eqref{estim-Green-diff} we get the tail estimate \eqref{tail-z-zN}. Then, as in the proof of Proposition 3.2 in \cite{OPT}, Borel-Cantelli's lemma yields that $H_k\big(\P_{N}S(t)(u_0,u_1)(\x);\s_{N}(\x)\big)$ converges to $:z^k:$ in $L^q([-T,T];W^{-\eps,\infty})$, $\mu$-almost surely, and moreover $:z^k:$ also satisfies \eqref{tail-z}.

Lastly, we prove the continuity in time of $z$. If we define the translation operator $\th: u \mapsto u(\cdot+h)$ for any $h \in [-1,1]$, we can use \eqref{def-gamma} and the mean value theorem to estimate
\begin{align*}
&\int_{\H^s(\M)}\big|(1-\Dlg)^{-\frac\eps2}(\th z-z)(t,\x)\big|^2d\mu\\& =2\sum_{n_1,n_2}\frac{\varphi_{n_1}(\x)\varphi_{n_2}(\x)}{\jb{\lambda_{n_1}}^{\eps}\jb{\lambda_{n_2}}^{\eps}}\int_{\M}\int_{\M}\varphi_{n_1}(\x_1)\varphi_{n_2}(\x_2)\big\{\gamma(\x_1,\x_2)-\gamma(t+h,t,\x_1,\x_2)\big\}d\x_1d\x_2\\ 
&\,\,\les \sum_{n\geq 0}\frac{\varphi_n(\x)^2}{\jb{\lambda_n}^{2+2\eps}}(1\wedge |h|\jb{\lambda_n})   \les \sum_{n\geq 0} \frac{\varphi_n(\x)^2}{\jb{\lambda_n}^{2+\eps}} |h|^{\eps},
\end{align*}
uniformly in $h\in [-1,1]$, $\x\in\M$ and $t\in\R$. Finally, using Lemma~\ref{LEM:ev}, we obtain the bound
\begin{align*}
\int_{\H^s(\M)}\big|(1-\Dlg)^{-\frac\eps2}(\th z-z)(t,\x)\big|^2d\mu &\les |h|^{\eps}\sum_{k\geq 0}\jb{k}^{-\eps} \sum_{n\geq 0} \mathbf{1}_{[k,k+1)}(\lambda_n)\frac{\varphi_n(\x)^2}{\jb{\lambda_n}^{2}}\\
&\les |h|^{\eps}\sum_{k\geq 0}\jb{k}^{-\eps} \sum_{n\geq 0} \mathbf{1}_{[k,k+1)}(\lambda_n)\frac{1}{\jb{\lambda_n}^{2}}\\
&\les |h|^{\eps}\sum_{n\geq 0} \frac{1}{\jb{\lambda_n}^{2+\eps}}\les |h|^{\eps}.
\end{align*}
 Hence using Sobolev's and Minkowski's inequalities as above, together with the Wiener chaos estimate \eqref{Wiener-chaos}
\begin{align}
\|(\th z-z)(t)\|_{L^p_{\mu}W^{-\eps,\infty} }^p \les |h|^{p\frac{\eps}{2}},
\label{kolmogorov}
\end{align}
uniformly in $t\in[0,T]$, which suffices to conclude that $z\in C\big([0,T];W^{-\eps,\infty}(\M)\big)$ almost surely by using Kolmogorov's continuity criterion for $p$ large enough. We can use the same argument to bound $\dt z$ in $C\big([0,T];W^{-1-\eps,\infty}(\M)\big)$ almost surely, which concludes the proof of Proposition \ref{prop-z}.
\end{proof}
\begin{proof}[Proof of Proposition \ref{prop-Green}]
We now give the proof of \eqref{estim-Green}. Since this is clear for $N\les 1$, we can assume that $N\gg 1$. First, in view of the finiteness of $\J$, it is enough to fix $j,j_1\in \J$ and to estimate 
\begin{equation}\label{G1}
 \Big\|(\kk_{j}\otimes\kk_{j_1})^{\star}\big\{\chi_{j}(\x)\chi_{j_1}(\y)(1-\Dlg)_{\y}^{-\frac\eps2}(\gamma_N(\x,\y))^k\big\}\Big\|_{L^{\infty}(\R^2\times\R^2)},
\end{equation}
where for functions $f$ on $\M\times\M$  and $(x,y)\in U_j\times U_{j_1}$ we write $(\kk_{j}\otimes\kk_{j_1})^\star f(x,y) = f(\kk_j(x),\kk_{j_1}(y))$.

By a variant of Proposition~\ref{prop-dvp} (see Remark~\ref{rk-pdolocal}) with fixed $x\in\R^d$, we can write
\begin{align*} 
(\kk_{j}\otimes\kk_{j_1})^{\star}\big\{\chi_{j}\chi_{j_1}(1-\Dlg)_{\y}^{-\frac\eps2}(\gamma_N)^k\big\}= a_{j_1,-\eps}(y,D)\big\{(\kk_{j}\otimes\kk_{j_1})^{\star}\big(\chi_{j}\widetilde{\chi}_{j_1}(\gamma_N)^k\big)\big\} + G_{-M,N}
\end{align*}
for some symbol $a_{j_1,-\eps}\in \S^{-\eps}(\R^2\times\R^2)$ with  compact support in $y$ included in $U_{j_1}$, some fattened version $\widetilde{\chi}_{j_1}$ of $\chi_{j_1}$, and for arbitrary $M>0$ with
\begin{align*}
\big\|G_{-M,N}\big\|_{L^{\infty}(\R^2)\times H^{s_1}(\R^2)} \les N^{s_1+s_2-M}\big\|\chi_j(\x)\gamma_N(\x,\y)^k\|_{L^{\infty}(\M)\times H^{-s_2}(\M)}
\end{align*}
for any $s_1,s_2\ge 0$ with $s_1+s_2\le M$. In particular the contribution of this last term to \eqref{G1} is
\begin{align*}
\big\|G_{-M,N}\big\|_{L^{\infty}(\R^2\times\R^2)} &\les \big\|G_{-M,N}\big\|_{L^{\infty}(\R^2)\times H^2(\R^2)} \les N^{2-M}\big\|\gamma_N\big\|_{L^{\infty}(\M)\times L^{\infty}(\M)}^k\\
& \les N^{2-M}\sup_{\x,\y\in\M}\Big(\sum_{n\ge 0}\psi_0(N^{-2}\ld_n^2)\frac{\varphi_n(\x)^2}{\jb{\ld_n}^2}\Big)^{\frac{k}2}\Big(\sum_{n\ge 0}\psi_0(N^{-2}\ld_n^2)\frac{\varphi_n(\y)^2}{\jb{\ld_n}^2}\Big)^{\frac{k}2}\\
& = O\Big(N^{2-M}\log(N)^k\Big),
\end{align*}
where the last two estimates come from Cauchy-Schwarz inequality and \eqref{sN}. This term is uniformly bounded by choosing $M>2$.

Taking again fattened versions of $\chi_{j},\widetilde{\chi}_{j_{1}}$ (which to simplify notations we still write $\chi_{j},\chi_{j_{1}}$) it then remains to estimate
\begin{align*}
a_{j_1,-\eps}(y,D)\big\{(\kk_{j}\otimes\kk_{j_1})^{\star}(\chi_{j}\chi_{j_1}\gamma_N)\big\}^k.
\end{align*}

Now, in view of the definition of the functional calculus and \eqref{covarGreen}, we can see $\gamma_N$ as the kernel of the $\Psi$DO $(1-\Dlg)^{-1}\psi_0^2(-N^2\Dlg)$. First, using e.g. \cite[Theorem 18.1.24]{HormanderIII}, we can expand the resolvent as
\begin{align*}
\kk_j^{\star}\big(\chi_j(1-\Dlg)^{-1}\big) = a_{j,-2}(x,D)\kk_j^{\star}\wt\chi_j + R_{j,-3}
\end{align*}
for some symbol $a_{j,-2}\in \S^{-2}(\R^2\times\R^2)$ compactly supported in $x$ in $U_j$, and some smoothing operator $R_{j,-3}$ of order $-3$ satisfying for any $s\in\R$
\begin{align*}
\big\|R_{j,-3}\big\|_{H^{s}(\M)\to H^{s+3}(\R^d)}\les 1.
\end{align*}
Next, using Proposition~\ref{prop-dvp}, and dropping the tilde for the fattened cut-offs, we get the expansion
\begin{align*}
&\kk_{j}^{\star}\big(\chi_{j}(1-\Dlg)^{-1}\psi_0^2(-N^2\Dlg)\chi_{j_1}\big)\\
&=\Big[a_{j,-2}(x,D)\kk_{j}^{\star}\chi_{j} + R_{j,-3}\Big]\psi_0^2(-N^2\Dlg)\chi_{j_1}
\\&=a_{j,-2}(x,D)(\kk_j^\star\chi_{j})\Big(\sum_{m=0}^{M-1}N^{-m}a_{j,m}(x,N^{-1}D)\kk_j^{\star}(\chi_j\chi_{j_1}\big)+R_{j,-M,N}\chi_{j_1}\Big)\\
&\qquad\qquad\qquad + R_{j,-3}\psi_0^2(-N^2\Dlg)\chi_{j_1},
\end{align*}
where $R_{j,-M,N}$ is a smoothing operator of order $-M$, with
\begin{align*}
\big\|R_{j,-M,N}\big\|_{H^{-s_2}(\M)\to H^{s_1}(\R^d)} \les N^{s_1+s_2-M},
\end{align*} 
for any $s_1,s_2\ge 0$ with $s_1+s_2\le M$.
  
 Then, taking $M=1$ in the above expansion, we have for any $(x,y)\in U_j\times U_{j_1}$ and $(\x,\y)=(\kk_j(x),\kk_{j_1}(y))$:
\begin{align}
\gamma_{N,j,j_1}(x,y)&\overset{\text{def}}{=}(\kk_{j}\otimes\kk_{j_1})^{\star}\big(\chi_{j}(\x)\chi_{j_1}(\y)\gamma_N(\x,\y)\big)\notag\\
& = \big(\Id\otimes \zeta_{j,j_1}\big)^{\star}\big(\kk_j^{\star}(\chi_j\chi_{j_1})(y)K_0(x,y)\big) + \chi_{j_1}(\y)K_1(x,\y),
\label{gammaNj}
\end{align}
where $K_0$ is the kernel of 
\begin{align}\label{K0}
(\kk_j^\star\chi_j)a_{j,-2}(x,D)(\kk_j^\star\widetilde{\chi}_j)\psi_0^2(-p_{j,2}(x,N^{-1}D)),
\end{align}
and $K_1$ the one to 
\begin{align}\label{K1}
(\kk_j^\star\chi_j)a_{j,-2}(x,D)(\kk_j^\star\widetilde{\chi}_j)R_{j,-M,N}+\kk_j^{\star}(\chi_j)R_{j,-3}\psi_0^2(-N^{-2}\Dlg).
\end{align}
Here $\zeta_{j,j_1}= \kk_j^{-1}\circ\kk_{j_1}$ is a diffeomorphism on $U_j\cap U_{j_1}$, provided that $\supp \chi_j\cap \supp\chi_{j_1}\neq \emptyset$, otherwise the contribution of $K_0$ in \eqref{gammaNj} vanishes. Let us also decompose $K_1 = K_{1,1}+ K_{1,2}$ corresponding to the two operators in \eqref{K1}. 

We will use that we can bound these kernels by the operator norm of the corresponding operators from $H^{-1-\dl}(\R^2)$ (or $H^{-1-\dl}(\M)$) to $H^{1+\dl}(\R^2)$. For $K_{1,1}$, since $a_{j,-2}(x,D)$ is bounded from $H^{s_1}(\R^2)$ to $H^{s_1+2}(\R^2)$, and using the smoothing property of $R_{j,-M,N}$ for $M=1$, we deduce that for $s_1 = -1+\dl$ and $s_2=1+\dl$ for some ${\displaystyle 0<\dl<\frac{1}{2k}}$, the operator with kernel $K_{1,1}$ maps $H^{-1-\dl}(\M)$ to $H^{1+\dl}(\R^2)$ with operator norm bounded by $N^{2\dl-1}$. Thus we obtain
\begin{align}
&\big\|\kk_{j_1}^{\star}\big(\chi_{j_1}(y)K_{1,1}(x,y)\big)\big\|_{L^{\infty}(\R^2\times\R^2)}\notag\\
&\les \big\|(\kk_j^\star\chi_j)a_{j,-2}(x,D)(\kk_j^\star\widetilde{\chi}_j)R_{j,-M,N}(\kk_{j_1}^{\star}\chi_{j_1})\big\|_{H^{-1-\dl}(\M)\to H^{1+\dl}(\R^2)}\notag\\
&\les \big\|R_{j,-M,N}(\kk_{j_1}^{\star}\chi_{j_1})\big\|_{H^{-1-\dl}(\M)\to H^{-1+\dl}(\R^2)}\notag\\
&\les N^{2\dl-1}\big\|(\kk_{j_1}^{\star}\chi_{j_1})\big\|_{H^{-1-\dl}(\M)\to H^{-1-\dl}(\M)}\notag\\
&\les N^{2\dl-1},\label{K11}
\end{align}
where in the last step we used the product rule of Corollary~\ref{cor-besov}~(iii).

As for $K_{1,2}$, we have that $\|R_{j,-3}\|_{H^{-2+\dl}(\M)\to H^{1+\dl}(\R^2)}\les 1$ and since we assumed that $N\gg 1$ we also have that $\|\psi_0(-N^{-2}\Dlg)\|_{H^{-1-\dl}(\M)\to H^{-2+\dl}(\M)}\les N^{2\dl-1}$. Thus we also have the bound
\begin{align*}
\big\|\kk_{j_1}^{\star}\big(\chi_{j_1}K_{1,2}\big)\big\|_{L^{\infty}(\R^2\times\R^2)}\les N^{2\dl-1}.
\end{align*}

Now we compute
\begin{align*}
\gamma_{N,j,j_1}^k &= \Big((\Id\otimes\zeta_{j,j_1})^{\star}\big(\kk_j^{\star}(\chi_j\chi_{j_1})K_0\big)\Big)^k\\
&\qquad\qquad+\sum_{\ell=1}^k{\binom k \ell}\big(\kk_{j_1}^{\star}(\chi_{j_1}K_1)\big)^{\ell}\big((\Id\otimes\zeta_{j,j_1})^{\star}[\kk_j^\star(\chi_j\chi_{j_1})K_0]\big)^{k-\ell}.
\end{align*}
We first deal with the terms with $\ell \ge 1$. Since $a_{j_1,-\eps}\in\S^{-\eps}(\R^2\times\R^2)$, in particular it is bounded on $L^p(\R^2)$ for any $1<p<\infty$ (see e.g. \cite{F}), hence using as above the Sobolev inequality $W^{\eps,r_{\eps}}(\R^d)\subset L^\infty(\R^d)$ for some $r_\eps\gg1$, and the compactness of $\supp \big(\kk_j^{\star}(\chi_j\chi_{j_1})K_0\big)$ and $\supp \kk_{j_1}^{\star}(\chi_{j_1}K_1)$, we get the crude estimate
\begin{align*}
&\Big\| a_{j_1,-\eps}(y,D)\Big\{\big(\kk_{j_1}^{\star}(\chi_{j_1}K_1)\big)^{\ell}\Big((\Id\otimes\zeta_{j,j_1})^{\star}\big(\kk_j^{\star}(\chi_j\chi_{j_1})K_0\big)\Big)^{k-\ell}\Big\}\Big\|_{L^{\infty}(\R^2\times\R^2)}\\
&\qquad\les \Big\|\big(\kk_{j_1}^{\star}(\chi_{j_1}K_1)\big)^{\ell}\Big((\Id\otimes\zeta_{j,j_1})^{\star}\big(\kk_j^{\star}(\chi_j\chi_{j_1})K_0\big)\Big)^{k-\ell}\Big\|_{L^{\infty}(\R^2\times\R^2)}\\ &\qquad \les \big\|\kk_{j_1}^{\star}(\chi_{j_1}K_1)\big\|_{L^{\infty}(\R^2\times\R^2)}^{\ell}\big\|\kk_j^{\star}(\chi_j\chi_{j_1})K_0\big\|_{L^{\infty}(\R^2\times\R^2)}^{k-\ell}.
\end{align*}
Along with the previous bounds for $K_{1,1}$ and $K_{1,2}$, we finally obtain
\begin{align}
\big\| a_{j_1,-\eps}(y,D)(\gamma_{N,j,j_1}^k)\big\|_{L^{\infty}(\R^2\times\R^2)}&\les \Big\| a_{j_1,-\eps}(y,D)\Big((\Id\otimes\zeta_{j,j_1})^{\star}\big(\kk_j^{\star}(\chi_j\chi_{j_1})K_0\big)\Big)^k\Big\|_{L^{\infty}(\R^2\times\R^2)}\notag\\
&\qquad\qquad+\sum_{\ell=1}^kN^{(2\dl-1)\ell}\|K_0\|_{L^{\infty}(\R^2\times\R^2)}^{k-\ell}.
\label{bound-gamma1}
\end{align}
Now, with the definition of $K_0$, we proceed as in \eqref{K11} to get the rough bound
\begin{align*}
&\|K_0\|_{L^{\infty}(\R^2\times\R^2)}\\
&\les \big\|(\kk_j^\star\chi_j)(x)a_{j,-2}(x,D)(\kk_j^\star\widetilde{\chi}_j)(x)\psi_0^2(-p_{j,2})(x,N^{-1}D)\big\|_{H^{-1-\dl}(\R^2)\to H^{1+\dl}(\R^2)}\\
&\les\|\psi_0^2(-p_{j,2})(x,N^{-1}D)\|_{H^{-1-\dl}(\R^2)\to H^{-1+\dl}(\R^2)} \les N^{2\dl},
\end{align*}
so that with our choice for $\dl$, the second term in the right-hand side of \eqref{bound-gamma1} is $O(N^{-\dl'})$ for $\dl'=1-2k\dl>0$. We are then left with estimating 
\begin{align*}
\Big\| a_{j_1,-\eps}(y,D)\Big((\Id\otimes\zeta_{j,j_1})^{\star}\big(\kk_j^{\star}(\chi_j\chi_{j_1})K_0\big)\Big)^k\Big\|_{L^{\infty}(\R^2\times\R^2)}.
\end{align*} 

First, to deal with $\zeta_{j,j_1}$, since the symbol class $\S^m$ in \eqref{def-symbolclass} is invariant by diffeomorphisms for any $m\in\R$ (see e.g. Theorem 18.1.17 in \cite{HormanderIII}), we can then write 
\begin{align*}
a_{j_1,-\eps}(y,D)\Big((\Id\otimes\zeta_{j,j_1})^{\star}\big(\kk_j^{\star}(\chi_j\chi_{j_1})K_0\big)\Big)^k&=(\Id\otimes\zeta_{j,j_1})^\star\Big(\wt a_{j_1,-\eps}(y,D) \big(\kk_j^{\star}(\chi_j\chi_{j_1})K_0\big)^k\Big)
\end{align*}
for some $\wt a_{j_1,-\eps}\in \S^{-\eps}$.

Next, we compute the symbol $c_0(x,\xi)$ of \eqref{K0} as
\begin{align}
c_0(x,\xi)=(\kk_j^\star\chi_j)(x)\int_{\R^2}\int_{\R^2}&e^{-ix_1\cdot\xi_1}a_{j,-2}(x,\xi+\xi_1)(\kk_j^\star\wt\chi_j)(x+x_1)\notag\\
&\qquad\times\psi_0^2(-p_{j,2}(x+x_1,N^{-1}\xi))d\xi_1dx_1.
\label{def-c0}
\end{align}
First, since $p_{j,2}$ and $(\kk_j^\star\wt\chi_j)$ are smooth in $x_1$ with bounded derivatives, we can integrate by parts in $x_1$ to get enough decay in $\xi_1$. Using that $a_{j,-2}$ is a $\Psi$DO of order $-2$, which therefore satisfies the bound \eqref{def-symbolclass} with $m=-2$, this gives for any $\al \in \N$
\begin{align*}
|c_0(x,\xi)|&\les(\kk_j^\star\chi_j)(x)\int_{\R^2}\int_{\R^2}\jb{\xi_1}^{-2\al}\jb{\xi+\xi_1}^{-2}\\
&\qquad\times\big|(1-\partial_{x_1}^2)^{\al}\big((\kk_j^\star\wt\chi_j)(x+x_1)\psi_0^2(-p_{j,2}(x+x_1,N^{-1}\xi))\big)\big|d\xi_1dx_1.
\end{align*}
Now, when a derivative in $x_1$ hits $\psi_0^2(-p_{j,2}(x+x_1,N^{-1}\xi))$, we pick up a term $(1-\partial_{x_1}^2)^\al p_{j,2}(x+x_1,N^{-1}\xi)=O(\jb{N^{-1}\xi}^2)$ by \eqref{def-symbolclass} which, due to the localization $|\xi|\les N$ on the support of $\psi_0^2(-p_{j,2}(x+x_1,N^{-1}\xi))$, is then bounded uniformly in $N$ . Thus we see that the term on the second line above can be bounded by
\begin{align*}
\mathbf{1}(x+x_1\in\supp(\kk_j^\star\wt\chi_j))\mathbf{1}(|\xi|\les N).
\end{align*} 
Then we can take $\al$ large enough to ensure that the integral in $\xi_1$ converges, so that we arrive at
\begin{align}
|c_0(x,\xi)|
&\les (\kk_j^\star\chi_j)(x)\mathbf{1}\big(|\xi|\les N\big)\int_{\R^2}\jb{\xi_1}^{-2\alpha}\jb{\xi+\xi_1}^{-2}d\xi_1\notag\\
&\les  (\kk_j^\star\chi_j)(x)\mathbf{1}\big(|\xi|\les N\big)\jb{\xi}^{-2}.
\label{c0}
\end{align}

Now, the kernel $K_0$ is related to the symbol $c_0(x,\xi)$ via the formula, 
\begin{align*}
K_0(x,y)=\int_{\R^2}e^{i(x-y)\cdot\xi}c_0(x,\xi)d\xi = \F_{\xi}^{-1}(c_0)(x,x-y),
\end{align*}
where $\F_{\xi}^{-1}$ means the inverse Fourier transform in the $\xi$ variable. This means that $\Big(\big(\kk_j^\star(\chi_j\chi_{j_1})\big)(y)K_0\Big)^k$ can be seen as 
\begin{align*}
\Big(\big(\kk_j^\star(\chi_j\chi_{j_1})\big)(y)K_0\Big)^k = \big(\kk_j^\star(\chi_j\chi_{j_1})\big)^k(y)\F_{\xi}^{-1}\big(c_0 *_{\xi}^k\big)(x,x-y),
\end{align*}
where $*_{\xi}^k$ stands for the iterated convolution in the $\xi$ variable:
\begin{align*}
(c_0*_{\xi}^k)(x,\xi_0) = \int_{\xi_0=\xi_1+\cdots+\xi_k}\prod_{j=1}^k c_0(x,\xi_j)d\xi_j.
\end{align*} 
Next, using that $\wt a_{j_1,-\eps}\in\S^{-\eps}(\R^2\times\R^2)$, we have for any $\xi,\xi_1\in\R^2$, 
\begin{align*}|\wt a_{j_1,-\eps}(y,\xi)|\les \jb{\xi}^{-\eps}\les \jb{\xi_1}^{-\eps}\jb{\xi-\xi_1}^{\eps},
\end{align*} 
and since $\kk_j^\star(\chi_j\chi_{j_1})\in C^{\infty}_0(\R^2)$, we can compute
\begin{align*}
&\Big\|\wt a_{j_1,-\eps}(y,D)\Big(\big(\kk_j^\star(\chi_j\chi_{j_1})\big)(y)K_0\Big)^k\Big\|_{L^{\infty}(\R^2\times\R^2)}\\&= \Big\|\int_{\R^2}e^{iy\cdot \xi}\wt a_{j_1,-\eps}(y,\xi)\int_{\R^2}\widehat{\big(\kk_j^\star(\chi_j\chi_{j_1})\big)^k}(\xi-\xi_1) e^{-ix\cdot\xi_1}(c_0*_{\xi}^k)(x,-\xi_1)d\xi_1d\xi\Big\|_{L^{\infty}(\R^2\times\R^2)}\\
&\les \sup_{x\in \supp(\kk_j^\star\chi_j)}\sup_{y\in \supp\kk_j^\star(\chi_j\chi_{j_1})} \int_{\R^2}\int_{\R^2}\jb{\xi_1}^{-\eps}\jb{\xi-\xi_1}^{\eps}\cdot\jb{\xi-\xi_1}^{-10}\big|(c_0*_{\xi}^k)(x,-\xi_1)\big|d\xi_1d\xi\\
&\les \sup_{x\in \supp(\kk_j^\star\chi_j)} \int_{\R^2}\jb{\xi_1}^{-\eps}\big|(c_0*_{\xi}^k)(x,-\xi_1)\big|d\xi_1.
\end{align*}
Thus, expanding the iterated convolution above and using the triangle inequality with the bound \eqref{c0}, we get the estimate
\begin{align}
\Big\|\wt a_{j_1,-\eps}(y,D)\Big(\big(\kk_j^\star(\chi_j\chi_{j_1})\big)(y)K_0\Big)^k\Big\|_{L^{\infty}(\R^2\times\R^2)}\les  \int_{\Gamma_{k,N}}\jb{\xi_1+\cdots+\xi_k}^{-\eps}\prod_{\ell=1}^k\jb{\xi_\ell}^{-2}d\xi_\ell,
\label{integral}
\end{align}
where 
\begin{align*}
\Gamma_{k,N}=\big\{(\xi_1,...,\xi_k)\in (\R^2)^k,~|\xi_\ell|\les N,~\ell=1,...,k\big\}.
\end{align*}
 So it remains to bound the integral in \eqref{integral}, uniformly in $N$. By symmetry in $\xi_1,...,\xi_k$, it is enough to bound the contribution of 
\begin{align*}
\widetilde{\Gamma}_{k,N}=\{(\xi_1,...,\xi_k)\in\Gamma_{k,N},~|\xi_k|\geq \cdots\geq |\xi_1|\}.
\end{align*}

First, to estimate the integral in $\xi_k$, if $\jb{\xi_1+\cdots+\xi_k}\geq \jb{\xi_k}$ then we have
\begin{align*}
\int_{|\xi_k|\geq |\xi_{k-1}|}\jb{\xi_1+\cdots+\xi_k}^{-\varepsilon}\jb{\xi_k}^{-2}d\xi_k&\les \int_{|\xi_k|\geq |\xi_{k-1}|}\jb{\xi_k}^{-2-\varepsilon}d\xi_k\les \jb{\xi_{k-1}}^{-\varepsilon}.
\end{align*}
On the other hand, in the case $\jb{\xi_1+\cdots+\xi_k}\leq \jb{\xi_k}$ we have
\begin{align*}
\int_{|\xi_k|\geq |\xi_{k-1}|}\jb{\xi_1+\cdots+\xi_k}^{-\varepsilon}\jb{\xi_k}^{-2}d\xi_k&\les \jb{\xi_{k-1}}^{-\frac{\varepsilon}2}\int_{\R^2}\jb{\xi_1+\cdots+\xi_k}^{-2-\frac{\varepsilon}2}d\xi_k\les \jb{\xi_{k-1}}^{-\frac{\varepsilon}2}.
\end{align*}
Hence we end up with the bound
\begin{align*}
\int_{\widetilde{\Gamma}_{k-1,N}}\jb{\xi_{k-1}}^{-2-\frac{\varepsilon}2}\prod_{\ell=1}^{k-2}\jb{\xi_\ell}^{-2}d\xi_\ell d\xi_{k-1},
\end{align*}
for which we can integrate successively in $|\xi_{k-1}|\geq |\xi_{k-2}|\geq \cdots\geq |\xi_1|$:
\begin{align*}
\int_{\widetilde{\Gamma}_{k-1,N}}\jb{\xi_{k-1}}^{-2-\frac{\varepsilon}2}\prod_{\ell=1}^{k-2}\jb{\xi_\ell}^{-2}d\xi_\ell d\xi_{k-1}&\les \int_{\widetilde{\Gamma}_{k-2,N}}\jb{\xi_{k-2}}^{-2-\frac{\varepsilon}2}\prod_{\ell=1}^{k-3}\jb{\xi_\ell}^{-2}d\xi_\ell d\xi_{k-2}\\
&\les \cdots \les \int_{\R^2}\mathbf{1}(|\xi_1|\les N)\jb{\xi_1}^{-2-\frac\eps2}d\xi_1 \leq C <\infty
\end{align*}
uniformly in $N$. This proves \eqref{estim-Green}.

For  \eqref{estim-Green-diff}, we can decompose locally $\gamma_{N_1}=K_{0,N_1}+N_1^{-1}K_{1,N_1}$ and $\gamma_{N_2}=K_{0,N_2}+N_2^{-1}K_{1,N_2}$ similarly as above, and following the computations we end up with estimating
\begin{align*}
\|\wt a_{j_1,-\eps}(y,D)\big(K_{0,N_1}^k-K_{0,N_2}^k\big)\|_{L^{\infty}(\R^2\times\R^2)},
\end{align*}
which follows as before except that we notice that the corresponding symbols satisfy
\begin{align*}
&\big(c_{0,N_1}*_{\xi}^k\big)(x,\xi_0)-\big(c_{0,N_2}*_{\xi}^k\big)(x,\xi_0)\\& = \int_{\xi_0=\xi_1+\cdots+\xi_k}\Big(\prod_{\ell=1}^k c_{0,N_1}(x,\xi_\ell)-\prod_{\ell=1}^k c_{0,N_2}(x,\xi_\ell)\Big)\prod_{\ell=1}^kd\xi_\ell.
\end{align*}
In view of \eqref{def-c0}, for the integral above to be non-zero, this requires at least one of the $\xi_\ell$ to be in the region $N_1\les |\xi_\ell|\les N_2$; otherwise, in the case all $|\xi_\ell|\ll N_1$ we have both $\psi_0^2(-p_{j,2}(x,N_1^{-1}\xi_\ell))=1=\psi_0^2(-p_{j,2}(x,N_2^{-1}\xi_\ell))$ and we see that $\prod_{\ell=1}^k c_{0,N_1}(x,\xi_\ell)=\prod_{\ell=1}^k c_{0,N_2}(x,\xi_\ell)$. For $N_1\les |\xi_\ell|\les N_2$, we can then replace the factor $\jb{\xi_\ell}^{-2-\frac{\eps}2}$ in the corresponding integral by $N_1^{-\frac{\eps}4}\jb{\xi_\ell}^{-2-\frac{\eps}4}$ and finish integrating as above. The estimate \eqref{estim-Green-diff2} follows from the same argument, replacing $N_1\les |\xi_\ell|\les N_2$ by $|\xi_\ell|\sim N$.

\end{proof}
\begin{remark}\label{rk-FIO}\rm
In Proposition~\ref{prop-z}, we only estimated the higher Wick powers $:z^{\ell}:$, $\ell \geq 2$, in $L^q([0,T];W^{-\eps,\infty}(\M))$ and did not show the continuity in time for these objects. Though we would only need a very rough bound in space (just to get a power of $h$ as in \eqref{kolmogorov}), the global argument as the one we used for $z$ does not seem to apply since we would need to estimate a product of $k$ eigenfunctions $\varphi_{n_1}\dots \varphi_{n_k}$, for which it is not clear if there is an ``off-diagonal decay" allowing to sum on $n_1,...,n_k$ even after regularizing the product. On the other hand, a local argument as in Proposition \ref{prop-Green} also fails since contrary to the truncation operator $\psi_{N^2}(-\Dlg)$, the wave operator $\cos(h\sqrt{1-\Dlg})$ for the linear wave equations does not belong to the usual symbol class $\S^0$ defined in \eqref{def-symbolclass}. However, we might be able to overcome this difficulty by replacing the local description of $\gamma_N$ in terms of $\Psi$DO by a local description of $\gamma_N(t+h,t)$ in terms of Fourier integral operators by following the construction in e.g. \cite{BGT,MSS}. We chose not to pursue this point further since our proof of well-posedness only requires the Wick powers to be controlled in $L^q([0,T];W^{-\eps,\infty}(\M))$ for some large but finite $p,q\in [1,\infty)$.
\end{remark}

Next, we prove a similar statement as in Proposition~\ref{prop-z} but for the solution $\P_N\Psi_\damp$ to truncated linear stochastic damped wave equations 
\begin{align}\label{truncLSDNLW}
d\begin{pmatrix}
u_N\\v_N
\end{pmatrix} = \begin{pmatrix}
0& 1\\\Dlg-1&0
\end{pmatrix}\begin{pmatrix}
u_N\\v_N
\end{pmatrix}dt + \begin{pmatrix}
0\\- v_Ndt+\sqrt{2}\P_NdB
\end{pmatrix}
\end{align} 
with data given by $(u_N,v_N)\big|_{t=0}=\P_N(u_0,u_1)\sim (\P_N)_{\star}\mu$. Recall that $\Psi_\damp=\Psi_\damp(u_0,u_1,\o)$ is the random variable on $\H^s(\M)\times\O$ defined in \eqref{def-Psidamp}.
\begin{proposition}\label{prop-psidamp}
$(\P_N)_{\star}\mu$ is invariant under \eqref{truncLSDNLW}, in the sense that for any continuous and bounded test function $F\in C_b(\H^s(\M);\R)$ and any $t\ge 0$,
\begin{align*}
&\int_{\H^s(\M)}\int_\O F\Big[\big(\P_N\Psi_\damp(u_0,u_1,\o),\dt\P_N\Psi_\damp(u_0,u_1,\o)\big)\Big]d\Prob(\o)d\mu(u_0,u_1)\\
&\qquad\qquad = \int_{\H^s(\M)}F\big[\big(\P_Nu_0,\P_Nu_1\big)\big]d\mu(u_0,u_1).
\end{align*}
Moreover for any $k\in\N$, $T>0$, $0<\eps\ll1$ and $1\leq p,q <\infty$ then $\big\{H_k\big(\P_N\Psi_{\damp}(u_0,u_1,\o);\s_N(x)\big)\big\}_{N\in\N}$ is a Cauchy sequence in 
\begin{align*}
L^p\big(\mu\otimes\Prob;L^q([0,T];W^{-\eps,\infty}(\M))\big)
\end{align*} and converges almost surely to a limit $\,:\!\Psi_{\damp}^k\!:\,\in L^q([0,T];W^{-\eps,\infty}(\M))$. Moreover $H_k\big(\P_N\Psi_{\damp}(u_0,u_1,\o);\s_N(x)\big)$ and $\,:\!\Psi_{\damp}^k\!:\,$ obey the tail estimates \eqref{tail-z} and \eqref{tail-z-zN}, and we also have $\Psi_{\damp}\in C\big([0,T];W^{-\eps,\infty}(\M)\big)\cap C^1\big([0,T];W^{-\eps-1,\infty}(\M)\big)$ almost surely, as well as the tail estimate
\begin{align}\label{tail-sup}
\mu\otimes\Prob\big(\|(\Psi_{\damp},\dt\Psi_\damp)\|_{C([0,T];\H^{-\eps})}>R\big)\leq Ce^{-cR^2}.
\end{align}
Lastly, $\mu$ is invariant under $(u_0,u_1)\mapsto (\Psi_\damp,\dt\Psi_\damp)$, in the same sense as above.
\end{proposition}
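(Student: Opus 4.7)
The plan is to mirror the structure of Proposition~\ref{prop-z}, using the invariance statement to reduce the spatial part of every stochastic estimate to the one already carried out for $z$.

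First I would establish the invariance of $(\P_N)_{\star}\mu$ under \eqref{truncLSDNLW} by a direct Ornstein--Uhlenbeck computation. On the $n$-th Fourier mode, \eqref{truncLSDNLW} decouples into
\begin{equation*}
da_n=b_n\,dt,\qquad db_n=-\jb{\lambda_n}^2 a_n\,dt-b_n\,dt+\sqrt{2}\,\psi_0(N^{-2}\lambda_n^2)\,d\beta_n,
\end{equation*}
and the unique invariant Gaussian produced by the Lyapunov equation $A\Sigma+\Sigma A^{\top}=-BB^{\top}$ has covariance $\operatorname{diag}\bigl(\psi_0(N^{-2}\lambda_n^2)^2\jb{\lambda_n}^{-2},\psi_0(N^{-2}\lambda_n^2)^2\bigr)$, which coincides with the law of $(\langle\P_N u_0,\varphi_n\rangle,\langle\P_N u_1,\varphi_n\rangle)$ under $\mu$. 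Modes for which $\psi_0(N^{-2}\lambda_n^2)=0$ start from and remain at $0$ under the (deterministic) flow, so $(\P_N)_{\star}\mu$ is preserved by \eqref{truncLSDNLW}.

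Next, I would reduce the analysis of $H_k(\P_N\Psi_{\damp};\s_N)$ in $L^p_{\mu\otimes\Prob}L^q_T W^{-\eps,\infty}$ to Proposition~\ref{prop-z}. By the invariance just proved and stationarity at equilibrium, for each fixed $t\geq 0$,
\begin{equation*}
\E[\P_N\Psi_{\damp}(t,\x)\P_N\Psi_{\damp}(t,\y)]=\P_N^2\gamma(\x,\y),
\end{equation*}
so identity \eqref{keypoint} of Lemma~\ref{lem-covar} applies verbatim, with $\gamma_N$ replaced by the damped counterpart at equal times. The uniform $L^p_{\mu\otimes\Prob}L^q_T W^{-\eps,\infty}$ bound, the Cauchy property, the tail estimates analogous to \eqref{tail-z}--\eqref{tail-z-zN} and $\mu\otimes\Prob$-almost sure convergence then follow from the very same chain of inequalities (Sobolev embedding, Minkowski's inequality, the Wiener chaos bound \eqref{Wiener-chaos}, Chebychev's inequality and Borel--Cantelli), together with Proposition~\ref{prop-Green} and \eqref{estim-Green-diff}--\eqref{estim-Green-diff2}.

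For the sample-path continuity $\Psi_{\damp}\in C([0,T];W^{-\eps,\infty})$, I would apply the Kolmogorov argument as in \eqref{kolmogorov} to the time increments of $\Psi_{\damp}$. Writing $\Psi_{\damp}(t+h)=(\dt V(h)+V(h))\Psi_{\damp}(t)+V(h)\dt\Psi_{\damp}(t)+\sqrt{2}\int_t^{t+h}V(t+h-s)dB(s)$ and using that at equilibrium $\Psi_{\damp}(t)$, $\dt\Psi_{\damp}(t)$ and the stochastic increment are pairwise independent (componentwise in the eigenbasis), a direct computation gives
\begin{equation*}
\E\bigl|(1-\Dlg)^{-\eps/2}(\Psi_{\damp}(t+h)-\Psi_{\damp}(t))(\x)\bigr|^2 \les \sum_{n\ge 0}\frac{\varphi_n(\x)^2}{\jb{\lambda_n}^{2+2\eps}}\bigl|1-K_n(h)\bigr|,
\end{equation*}
where $K_n(h)=v_n'(h)+v_n(h)$ with $v_n(h)=e^{-h/2}\sin(h\sqrt{\tfrac34+\lambda_n^2})/\sqrt{\tfrac34+\lambda_n^2}$. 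Taylor expansion at $h=0$ yields $|1-K_n(h)|\lesssim 1\wedge h^2\jb{\lambda_n}^2\leq (|h|\jb{\lambda_n})^{\eps}$, so Lemma~\ref{LEM:ev} reproduces the $|h|^{\eps}$ bound of \eqref{kolmogorov}. Sobolev embedding, Wiener chaos and Kolmogorov's continuity criterion then yield the almost sure continuity; the analogous argument handles $\dt\Psi_{\damp}$ in $W^{-1-\eps,\infty}$.

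Finally, the tail estimate \eqref{tail-sup} is obtained from the Borell--Tsirelson--Ibragimov--Sudakov inequality applied to the centered $\H^{-\eps}$-valued Gaussian process $(\Psi_{\damp},\dt\Psi_{\damp})$: the finite expectation of the supremum comes from the previous step, and the weak variance $\sup_{t\in[0,T]}\E\|(\Psi_{\damp},\dt\Psi_{\damp})(t)\|_{\H^{-\eps}}^2$ is uniform in $t$ by invariance. The invariance of $\mu$ under $(u_0,u_1)\mapsto(\Psi_{\damp},\dt\Psi_{\damp})$ follows by passing to the limit $N\to\infty$ in the $(\P_N)_{\star}\mu$-invariance: since $\P_N(u_0,u_1)\sim(\P_N)_{\star}\mu$ whenever $(u_0,u_1)\sim\mu$, the almost sure convergence $(\P_N\Psi_{\damp},\dt\P_N\Psi_{\damp})\to(\Psi_{\damp},\dt\Psi_{\damp})$ in $\H^{-\eps}$ and dominated convergence for $F\in C_b(\H^s;\R)$ close the argument. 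The main obstacle is a bookkeeping step in the continuity calculation: one must verify that at equilibrium the contribution of the initial data and that of the stochastic convolution decouple cleanly in each mode, so that no cross-terms spoil the favourable $1\wedge h^2\jb{\lambda_n}^2$ bound on $1-K_n(h)$.
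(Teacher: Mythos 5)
Your proposal is correct and follows the paper's overall architecture (invariance of $(\P_N)_{\star}\mu$, reduction of the stochastic estimates to Proposition~\ref{prop-z} via the equal-time covariance, Kolmogorov continuity, and a Gaussian tail). Two steps are, however, carried out by genuinely different tools, and both choices are sound. For the invariance, you diagonalize mode by mode and solve the Lyapunov equation $A\Sigma+\Sigma A^{\top}=-BB^{\top}$, identifying the invariant covariance $\mathrm{diag}\bigl(\psi_0(N^{-2}\lambda_n^2)^2\jb{\lambda_n}^{-2},\psi_0(N^{-2}\lambda_n^2)^2\bigr)$; the paper instead splits the generator $\L_N=\L_N^1+\L_N^2$ into a Hamiltonian part (which preserves $\mu_N$ by Liouville's theorem and energy conservation) and an Ornstein--Uhlenbeck part, and checks $\L_N^{\#}\mu_N=0$. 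Your computation is more compact; the paper's decomposition makes the separate roles of wave propagation and friction/noise explicit and is reused verbatim for the truncated nonlinear flow \eqref{abNLW} in Proposition~\ref{prop-FNLW}. For the tail bound \eqref{tail-sup}, you invoke Borell--Tsirelson--Ibragimov--Sudakov for the centered $\H^{-\eps}$-valued Gaussian, using a.s.\ finiteness of the supremum from the Kolmogorov step and the uniform weak variance from stationarity; the paper proceeds by Chebychev, the Wiener chaos bound \eqref{Wiener-chaos}, and Doob's maximal inequality for the stochastic convolution, then optimizes in $p$. Both yield $e^{-cR^2}$. Finally, the cross-term worry you flag at the end of the continuity argument is already settled by the stationary autocovariance identity $\E\bigl|a_n(t+h)-a_n(t)\bigr|^2=2\sigma_n^2\bigl(1-K_n(h)\bigr)$ combined with the vanishing of the off-diagonal entry $\sigma_{ab}=0$ in your Lyapunov solution (so $\E[a_n(t)b_n(t)]=0$ at equilibrium); no additional bookkeeping is needed.
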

\begin{remark}\label{rk-SQE}\rm
Note that in the case of the stochastic quantization equation \eqref{SQE} treated in \cite{DPD}, the truncated stochastic convolution
\begin{align*}
\mathfrak{z}_N(t) = \P_N\int_{-\infty}^t e^{(t-t')(\Dlg-1)}dB(t')
\end{align*} has the same covariance function $\gamma_N$ as for $z_N$ and $\P_N\Psi_{\damp}$, so we can use the same argument as in Propositions~\ref{prop-z} and \ref{prop-psidamp} to estimate the Wick powers of $\mathfrak{z}$. In turn this would generalize the result of Da Prato and Debussche \cite{DPD} to the case of a general compact boundaryless Riemannian surface, which to the authors knowledge would be new.
\end{remark}
\begin{proof}[Proof of Proposition \ref{prop-psidamp}.]
We only prove the first assertion, since the rest of the proposition follows from the same analysis as for Proposition~\ref{prop-z}. Namely, once we have the invariance of $(\P_N)_{\star}\mu$, we know that $\P_N\Psi_{\damp}$ has the same (spatial) covariance function $\gamma_N$ as $\P_NS(t)(u_0,u_1)$, so we can write
\begin{align*}
&\int_{\H^s(\M)}\int_\O\big|(1-\Dlg)^{-\frac\eps2}H_k\big(\P_N\Psi_{\damp}(u_0,u_1,\o)(t,\x);\s_N(\x)\big)\big|^2d\Prob(\o) d\mu(u_0,u_1)\\
& =k!\big[(1-\Dlg)_{\x_1}^{-\frac\eps2}(1-\Dlg)_{\x_2}^{-\frac\eps2}\big(\gamma_N(\x_1,\x_2)^k\big)\big]\big|_{\x_1=\x_2=\x},
\end{align*}
where $\gamma_N$ is the same as in Lemma \ref{lem-covar}, and the same computations as in the proof of Proposition \ref{prop-z} apply.

Proving the invariance of $\mu_N=(\P_N)_{\star}\mu$ is equivalent to showing $\L_N^\#\mu_N=0$, where $\L_N$ is the infinitesimal generator of \eqref{truncLSDNLW} and $\L_N^\#$ is its dual acting on probability measures on $E_N\times E_N$ by
\begin{equation*}
\forall F\in C^{\infty}_b(E_N\times E_N;\R),~\int_{E_N\times E_N}F(u,v)d(\L_N^\#\mu_N) = \int_{E_N\times E_N}(\L_NF)(u,v)d\mu_N(u,v).
\end{equation*}
But in view of \eqref{truncLSDNLW}, we have $\L_N = \L_N^1+\L_N^2$, where $\L_N^1$ is the generator for the linear wave equations, and $\L_N^2$ the one for an Ornstein-Uhlenbeck process. More precisely, \eqref{truncLSDNLW} can be seen as a system of SDEs in $\R^{2\Ld_N}$, where $\Ld_N=\dim E_N$, given by
\begin{align*}
\begin{cases}
d a_n = b_n dt\\
d b_n = -\jb{\lambda_n}^2a_ndt +(-\ b_ndt+\sqrt{2}\psi_0(N^{-2}\lambda_n^2)d\beta_n)
\end{cases},~n=0,...,\Ld_N-1,
\end{align*}
whose infinitesimal generator is given by
\begin{align*}
\L_Nf(a_0,...,a_{\Ld_N-1},b_0,...,b_{\Ld_N-1}) = \sum_{n=0}^{\Ld_N-1}b_n\partial_{a_n}f-\jb{\lambda_n}^2a_n\partial_{b_n}f - b_n\partial_{b_n}f+\psi_0(N^{-2}\lambda_n^2)^2\partial_{b_n}^2f.
\end{align*}
Now if we set 
\begin{align*}
\L_N^2f = \sum_{n=0}^{\Ld_N-1}-b_n\partial_{b_n}f+\psi_0(N^{-2}\lambda_n^2)^2\partial_{b_n}^2f
\end{align*}
we recognize the generator of the Ornstein-Uhlenbeck process
\begin{align*}
\begin{cases}
a_n(t)=a_n(0),\\b_n(t)=e^{-t}b_n(0)+\sqrt{2}\psi_0(N^{-2}\lambda_n^2)\int_0^te^{-(t-t')}d\beta_n(t'),
\end{cases}
\end{align*}
and a straightforward computation using It\^o's isometry gives that $b_n$ is a mean 0 Gaussian random variable with variance
\begin{align*}
\E(b_n(t)^2)=e^{-2t}\E(b_n(0)^2)+2\psi_0(N^{-2}\lambda_n^2)^2\frac{1-e^{-2 t}}2.
\end{align*}
In particular, in view of \eqref{def-mu}, $\E(b_n(t)^2)= \psi_0(N^{-2}\lambda_n^2)^2 = \E(b_n(0)^2) $, which means that $\L_N^2$ preserves $\mu_N$. On the other hand, we have
\begin{align*}
\L_N^1 = \sum_{n=0}^{\Ld_N-1}b_n\partial_{a_n}-\jb{\lambda_n}^2a_n\partial_{b_n},
\end{align*} 
which is the generator of the truncated linear wave equations seen as the Hamiltonian system of ODEs
\begin{align*}
\begin{cases}
\frac{d}{dt}a_n = b_n,\\
\frac{d}{dt}b_n = -\jb{\lambda_n}^2a_n,
\end{cases}~n=0,...,\Ld_N-1.
\end{align*}
Now the energy of this system
\begin{align*}\En_{0,N}(a_0,...,a_{\Ld_N-1},b_0,...,b_{\Ld_N-1}) = \frac12\sum_{n=0}^{\Ld_N-1}\big(\jb{\lambda_n}^2a_n^2+b_n^2\big)\end{align*}
is conserved, and by Liouville's theorem, this system preserves the Lebesgue measure $\prod_{n=0}^{\Ld_N-1}da_ndb_n$, so we see that the measure ${\displaystyle e^{-\En_{0,N}(a_0,...,a_{\Ld_N-1},b_0,...,b_{\Ld_N-1})}\prod_{n=0}^{\Ld_N-1}da_ndb_n}$ is also conserved, which is nothing else than the conservation of $\mu_N$ in view of \eqref{def-mu}. All in all, $\L_N^\#\mu_N=0$ which concludes the proof of the invariance. 

The invariance of $\mu$ for $(\Psi,\dt \Psi)$ then follows from the invariance of $(\P_N)_\star\mu$ for $(\P_N\Psi,\P_N\dt\Psi)$ along with the almost sure convergence of $(\P_N\Psi,\P_N\dt\Psi)(t)$ towards $(\Psi,\dt\Psi)(t)$ in $\H^s(\M)$ for any $t\ge 0$ and the weak convergence of $(\P_N)_\star\mu$ towards $\mu$ (which is clear from the convergence almost surely and in $L^p(\O_0\times\O_1;\H^s(\M))$ for any $p\ge 1$ of the series in \eqref{def-mu}).

Finally, in order to show the last tail estimate \eqref{tail-sup}, in view of \eqref{def-Psidamp} we can first separate
\begin{align*}
&\mu\otimes\Prob\big(\|(\Psi_{\damp},\dt\Psi_\damp)\|_{C([0,T];\H^{-\eps})}>R\big)\\
&\le \mu\big(\sup_{t\le T}\|\dt V(t)u_0 + V(t)(u_0+u_1)\|_{H^{-\eps}}\gtrsim R\big)\\
&\qquad+\mu\big(\sup_{t\le T}\|\dt^2 V(t)u_0 + \dt V(t)(u_0+u_1)\|_{H^{-1-\eps}}\gtrsim R\big)\\
&\qquad+\Prob\Big(\sup_{t\le T}\Big\|\int_0^tV(t-t')dB(t')\Big\|_{H^{-\eps}}\gtrsim R\Big)\\
&\qquad+\Prob\Big(\sup_{t\le T}\Big\|\int_0^t\dt V(t-t')dB(t')\Big\|_{H^{-1-\eps}}\gtrsim R\Big)\\
&= \I+\II+\III+\IV.
\end{align*}
We begin by estimating $\I$. Using Chebyshev's inequality, the boundedness of $\dt^j V(t): H^s(\M)\to H^{s+j-1}(\M)$, for any $s\in\R$ and $j\ge 0$, and the Wiener chaos estimate \eqref{Wiener-chaos} with the fact that $(u_0,u_1)$ is Gaussian, we get a constant $C>0$ such that we can bound for any $T,R,\eps>0$ and $p\ge 1$
\begin{align*}
\I & \les R^{-p}\E\big[\big(\sup_{t\le T}\|\dt V(t)u_0 + V(t)(u_0+u_1)\|_{H^{-\eps}}\big)^p\big] \les R^{-p} \E\|(u_0,u_1)\|_{\H^{-\eps}}^p \\
&\les R^{-p}(p-1)^{\frac{p}2}\big(\E\|(u_0,u_1)\|_{\H^{-\eps}}^2\big)^{\frac{p}2} \le C^p (p-1)^{\frac{p}2}R^{-p}.
\end{align*}
Optimizing in $p$ finally leads to
\begin{align*}
\mu\big(\sup_{t\le T}\|\dt V(t)u_0 + V(t)(u_0+u_1)\|_{H^{-\eps}}\gtrsim R\big) \les e^{-cR^2}
\end{align*}
for some $c>0$ independent of $T$ and $R$. The estimate on $\II$ is similar. As for $\III$, we first use Doob's martingale inequality (see e.g. Theorem 3.9 in \cite{DPZ}) to bound
\begin{align*}
\III &\les R^{-p}\sup_{t\le T}\E\Big[\Big\|\int_0^tV(t-t')dB(t')\Big\|_{H^{-\eps}}^p\Big]
\end{align*}
and then conclude as above since $\int_0^tV(t-t')dB(t')$ is Gaussian. The same argument applies to $\IV$, which finally leads to \eqref{tail-sup}.
\end{proof}
\begin{remark}\rm
Note that the proof of the invariance of $(\P_N)_{\star}\mu$ above works equally well for $(\Pi_N)_{\star}\mu$. Of course, the estimates on the Wick powers require the smooth cut-off $\P_N$ instead of the sharp cut-off $\Pi_N$.
\end{remark}

\subsection{Estimate on the stochastic convolution}
As for the nonlinear wave equations with random initial data, the key point in the analysis of the stochastic nonlinear wave equations \eqref{SNLW} is the following proposition. Let us recall here that the (truncated) stochastic convolution (solution of the linear stochastic wave equation) is defined by
\begin{align*}
\Psi_N(t,x)=\P_N\int_0^t\frac{\sin\big((t-t')\sqrt{1-\Dlg}\big)}{\sqrt{1-\Dlg}}dB(t')
\end{align*}
and the cylindrical Wiener process $B$ is defined in \eqref{STWN}. The corresponding renormalization is given in \eqref{def-WickPsi}.
\begin{proposition}\label{prop-psi}
Let $0<\eps\ll 1$, $k \in\N$, $T \in (0;1]$ and $p,q \in [1,\infty)$. Then, $\big\{H_k\big(\P_N\Psi(\o);\s_N(t,\x)\big)\big\}_{N\in\N}$ is a Cauchy sequence in $L^p\big(\O;L^q\big([0,T];W^{-\eps,\infty}(\M)\big)\big)$. In particular, denoting the limit by $:\Psi^k\!:\,$, we also have that $H_k\big(\P_N\Psi(\o);\s_N(t,\x)\big)$ converges almost surely towards $ \,:\!\Psi^k\!:\,$ in $L^q\big([0,T];W^{-\eps,\infty}(\M)\big)$, and for $k=1$, we have that $\Psi$ belongs almost surely to $C([0,T];W^{-\eps,\infty}(\M))\cap C^1([0,T];W^{-\eps-1,\infty}(\M))$. Moreover $H_k\big(\P_N\Psi(\o);\s_N(t,\x)\big)$, $\,:\!\Psi^k\!:\,$, and $\Psi$ respectively obey the tail estimates \eqref{tail-z}, \eqref{tail-z-zN}, and \eqref{tail-sup}.
\end{proposition}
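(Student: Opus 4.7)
The plan is to follow the scheme of the proofs of Propositions~\ref{prop-z} and~\ref{prop-psidamp}: reduce the estimate on $H_k(\P_N\Psi;\s_N)$ to a bound on the $k$th power of its spatial covariance, and then invoke Proposition~\ref{prop-Green}. The only substantial new feature is that $\P_N\Psi$ is no longer stationary in time, so I first have to verify that the $t$-dependent covariance is still essentially controlled by the truncated Green function $\gamma_N$.

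First I would compute, using It\^o's isometry together with the identity $2\sin^2\theta=1-\cos(2\theta)$, the equal-time spatial covariance
\begin{align*}
\Gamma_N(t,\x_1,\x_2) \deff \E\big[\P_N\Psi(t,\x_1)\P_N\Psi(t,\x_2)\big] = \tfrac{t}{2}\,\gamma_N(\x_1,\x_2) - \tfrac{1}{4}\wt\Gamma_N(t,\x_1,\x_2),
\end{align*}
where $\gamma_N$ is as in \eqref{covarGreen} and $\wt\Gamma_N$ is the kernel of $\sin(2t\sqrt{1-\Dlg})(1-\Dlg)^{-3/2}\P_N^2$; in particular $\s_N(t,\x)=\Gamma_N(t,\x,\x)$, matching \eqref{def-sigmat}. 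Cauchy-Schwarz combined with Lemma~\ref{LEM:ev} and Weyl's law gives $\|\wt\Gamma_N\|_{L^{\infty}(\M\times\M)}\les 1$ uniformly in $N$ and $t\in[0,T]$, using that $\sum_n \varphi_n(\x)^2/\jb{\ld_n}^3<\infty$. Arguing then exactly as in Lemma~\ref{lem-covar} via \eqref{prod-wick},
\begin{align*}
\E\big|(1-\Dlg)^{-\frac\eps2}H_k\big(\P_N\Psi(t,\x);\s_N(t,\x)\big)\big|^2 = k!\,\big[(1-\Dlg)_{\x_1}^{-\frac\eps2}(1-\Dlg)_{\x_2}^{-\frac\eps2}\Gamma_N^k\big]\big|_{\x_1=\x_2=\x}.
\end{align*}
Expanding $\Gamma_N^k=\sum_{j=0}^k\binom{k}{j}(\tfrac{t}{2}\gamma_N)^j(-\tfrac{1}{4}\wt\Gamma_N)^{k-j}$, the term $j=k$ is controlled in $L^{\infty}(\M\times\M)$ by Proposition~\ref{prop-Green} with bound $\les t^k$, and for $j<k$ the factor $\wt\Gamma_N^{k-j}$ is a uniformly bounded smooth kernel, so adapting the pseudo-differential decomposition $\gamma_N=K_0+N^{-1}K_1$ from \eqref{gammaNj} used in the proof of Proposition~\ref{prop-Green} yields a uniform-in-$N$ bound for the mixed terms as well. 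This produces the key uniform $L^2(\O)$ estimate.

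From this point the rest of the argument is a direct transcription of the one given for Proposition~\ref{prop-z}: the Wiener chaos estimate \eqref{Wiener-chaos} upgrades the moment to $L^p(\O)$, Sobolev embedding $W^{-\eps/2,r_\eps}(\M)\embeds W^{-\eps,\infty}(\M)$ together with Minkowski's inequality yields the $L^p(\O;L^q([0,T];W^{-\eps,\infty}))$ bound, the analogous computation for the difference $H_k(\P_{N_1}\Psi;\s_{N_1})-H_k(\P_{N_2}\Psi;\s_{N_2})$ combined with \eqref{estim-Green-diff}-\eqref{estim-Green-diff2} gives the Cauchy property at rate $N_1^{-\wt\eps}$ (hence almost sure convergence via Borel-Cantelli), and Chebychev's inequality with optimization in $p$ produces the tail bounds \eqref{tail-z} and \eqref{tail-z-zN}. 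For $k=1$, temporal continuity of $\Psi$ and of $\dt\Psi=\int_0^t\cos((t-t')\sqrt{1-\Dlg})dB(t')$ (which loses one derivative) is obtained by Kolmogorov's criterion after splitting $\Psi(t+h)-\Psi(t)$ into its new-noise term $\int_t^{t+h}$ and old-noise increment $\int_0^t$, and estimating each via It\^o's isometry and the mean value theorem as in \eqref{kolmogorov}. Finally, the supremum-in-time tail bound \eqref{tail-sup} follows from Doob's maximal inequality combined with Gaussianity, exactly as in Proposition~\ref{prop-psidamp}.

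The hard part will be making precise the uniform-in-$N$ bound on the mixed terms $\gamma_N^j\wt\Gamma_N^{k-j}$ for $j<k$, since the fractional Laplacian $(1-\Dlg)_{\x_2}^{-\eps/2}$ does not distribute over products. The natural remedy is to absorb $\wt\Gamma_N^{k-j}$ (which is smoother than $\gamma_N$) into a modified smooth symbol and then rerun the symbol-composition argument that produced \eqref{integral}; the only structural input needed there was the decomposition \eqref{gammaNj} of $\gamma_N$, which remains available. Once this technicality is dispatched, the remainder is a routine adaptation of what has been written out for Propositions~\ref{prop-z} and~\ref{prop-psidamp}.
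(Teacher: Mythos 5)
Your overall scheme coincides with the paper's: the same covariance decomposition $\gamma_N^t=\tfrac{t}{2}\gamma_N+\wt\gamma_N^t$ (your $\Gamma_N$, $\wt\Gamma_N$), the same reduction via \eqref{prod-wick} to powers of the equal-time covariance, and the same endgame (Wiener chaos, Sobolev/Minkowski, Chebyshev with optimization in $p$ for \eqref{tail-z}--\eqref{tail-z-zN}, Kolmogorov for continuity, Doob for \eqref{tail-sup}). The one step where you depart from the paper is exactly the one you flag as ``the hard part'', and there your proposed remedy has a genuine problem. You suggest absorbing $\wt\Gamma_N^{k-j}$ ``into a modified smooth symbol'' and rerunning the symbol-composition argument behind \eqref{integral}. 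But $\wt\Gamma_N$ is the kernel of $\tfrac14\sin(2t\sqrt{1-\Dlg})(1-\Dlg)^{-3/2}\P_N^2$, and the oscillating factor $\sin(2t\sqrt{1-\Dlg})$ does not belong to the symbol classes \eqref{def-symbolclass} used in Proposition~\ref{prop-dvp} and Remark~\ref{rk-pdolocal} (each derivative of $\ld\mapsto\sin(2t\sqrt{1+\ld})$ only gains half a power, with $t$-dependent constants); this is precisely the obstruction recorded in Remark~\ref{rk-FIO}, where such wave-type multipliers would require Fourier integral operator machinery rather than the $\Psi$DO calculus of Section~\ref{SEC:PDO}. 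Moreover $\wt\Gamma_N$ is \emph{not} uniformly smooth in $N$: its first derivatives involve $\sum_n\jb{\ld_n}^{-2}\varphi_n\varphi_n$, which grows like $\log N$ by Weyl's law, so treating it as a smooth symbol factor is not justified as stated.

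The fix is more elementary and is essentially already contained in your own computation. The same Cauchy--Schwarz plus Lemma~\ref{LEM:ev} argument you use to get $\|\wt\Gamma_N\|_{L^\infty}\les1$ gives, after inserting $(1-\Dlg)_{\x_1}^{2\eps}(1-\Dlg)_{\x_2}^{2\eps}$, a bound by $\sum_n\jb{\ld_n}^{-3+8\eps}<\infty$ uniformly in $N$ and $t\in[0,T]$; hence $\wt\gamma_N^t$ is uniformly bounded in the \emph{positive}-regularity space $B^{\eps}_{\infty,\infty}$ in each variable. One then handles the mixed terms $\gamma_N^{\ell}(\wt\gamma_N^t)^{k-\ell}$ by the Besov product estimate of Corollary~\ref{cor-besov}(iii), pairing $\gamma_N^{\ell}\in B^{-\eps/2}_{\infty,\infty}$ (Proposition~\ref{prop-Green}) against $(\wt\gamma_N^t)^{k-\ell}\in B^{\eps}_{\infty,\infty}$, which is exactly how the non-distributivity of $(1-\Dlg)^{-\eps/2}$ over products is circumvented; no new pseudodifferential analysis is needed. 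The same pairing, combined with \eqref{estim-Green-diff}--\eqref{estim-Green-diff2} and the observation that the difference of the $\wt\gamma^t$ pieces gains a factor $N_1^{-\wt\eps}$ from the tail $\sum_{N_1\les\ld_n\les N_2}\jb{\ld_n}^{8\eps-3}$, yields the Cauchy property at the rate you claim. With this replacement your argument goes through and matches the paper's proof.
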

\begin{proof}
As before, we can compute for fixed $t\in [0,T]$ and $\x\in\M$
\begin{align*}
&\E\big[\big|(1-\Dlg)^{-\frac\eps2}H_k\big(\P_N\Psi(\o,t,\x);\s_N(t,\x)\big)\big|^2\big]\\&=\sum_{n,n'\geq 0}\frac{\varphi_n(\x)}{\jb{\lambda_n}^{\eps}}\frac{\varphi_{n'}(\x)}{\jb{\lambda_{n'}}^{\eps}}\int_{\M}\int_{\M}\E\big[H_k\big(\P_N\Psi(\o,t,\x_1);\s_N(t,\x_1)\big)\\
&\qquad\qquad\times H_k\big(\P_N\Psi(t,\x_2);\s_N(t,\x_2)\big)\big]\varphi_n(\x_1)\varphi_{n'}(\x_2)d\x_1d\x_2.
\end{align*}
Now we use \eqref{prod-wick}, hence
\begin{align*}
&\E\big[\big|(1-\Dlg)^{-\frac\eps2}H_k\big(\P_N\Psi(\o,t,\x);\s_N(t,\x)\big)\big|^2\big]\\&=k!\sum_{n,n'\geq 0}\frac{\varphi_n(\x)}{\jb{\lambda_n}^{\eps}}\frac{\varphi_{n'}(\x)}{\jb{\lambda_{n'}}^{\eps}}\int_{\M^2}\big[\gamma_N^t(\x_1,\x_2)\big]^kd\x_1d\x_2\\
&=k!\big((1-\Dlg)_{\x_1}^{-\frac\eps2}(1-\Dlg)_{\x_2}^{-\frac\eps2}\big[\gamma_N^t(\x_1,\x_2)\big]^k\big)\big|_{\x_1=\x_2=\x},
\end{align*}
where we define
\begin{equation*}
\begin{aligned}
\gamma_N^t(\x_1,\x_2) &\overset{\text{def}}{=} \E\big[\P_N\Psi(\o,t,\x_1)\cdot\P_N\Psi(\o,t,\x_2)\big]\\&= \sum_{n\geq 0}\psi_0^2(N^{-2}\lambda_n^2)\bigg(\int_0^t\bigg[\frac{\sin\big((t-t')\jb{\lambda_n}\big)}{\jb{\lambda_n}}\bigg]^2dt'\bigg)\varphi_n(\x_1)\varphi_n(\x_2),
\end{aligned}
\end{equation*}
the last equality resulting from It\^o's isometry. In particular, in view of the second line in \eqref{def-sigmat}, we see that $\gamma_N^t$ can be decomposed as
\begin{align*}
\gamma_N^t = \frac{t}{2}\gamma_N + \widetilde{\gamma}_N^t,
\end{align*}
where $\gamma_N$ is given in \eqref{def-gamma}, and
\begin{align*}
\widetilde{\gamma}_N^t(\x_1,\x_2) = -\sum_{n\geq 0}\psi_0^2(N^{-2}\lambda_n^2)\frac{\sin(2t\jb{\lambda_n})}{4\jb{\lambda_n}^3}\varphi_n(\x_1)\varphi_n(\x_2).
\end{align*}

Hence, using the product estimate of Corollary \ref{cor-besov} (iii), we get for any $t\in [0,T]$
\begin{align*}
\|(\gamma_N^t)^k\|_{B^{-\frac{\eps}2}_{\infty,\infty}(\M)\times B^{-\frac{\eps}2}_{\infty,\infty}(\M)} &\les_T \sum_{\ell=0}^k \|\gamma_N^{\ell}(\widetilde{\gamma}_N^t)^{k-\ell}\|_{B^{-\frac{\eps}2}_{\infty,\infty}(\M)\times B^{-\frac{\eps}2}_{\infty,\infty}(\M)}\\& \les \sum_{\ell=0}^k \|\gamma_N^{\ell}\|_{B^{-\frac{\eps}2}_{\infty,\infty}(\M)\times B^{-\frac{\eps}2}_{\infty,\infty}(\M)}\|\widetilde{\gamma}_N^t\|_{B^{\eps}_{\infty,\infty}(\M)\times B^{\eps}_{\infty,\infty}(\M)}^{k-\ell}. 
\end{align*}
Now from Proposition \ref{prop-Green}, we have that $\|\gamma_N^{\ell}\|_{W^{-\frac\eps2,-\frac\eps2,\infty}}$ is bounded uniformly in $N$ for any $\eps>0$. As for the other term, we can estimate it directly with Cauchy-Schwarz inequality and Lemma \ref{LEM:ev}: 
\begin{align*}
&\|\widetilde{\gamma}_N^t\|_{B^{\eps}_{\infty,\infty}(\M)\times B^{\eps}_{\infty,\infty}(\M)}\\
&= \sup_{M_1,M_2\in 2^{\Z_+}}M_1^{\eps}M_2^{\eps}\sup_{\x_1,\x_2\in\M}\Big|\sum_{n\geq 0}\psi_0(N^{-2}\lambda_n^2)\psi_{M_1^2}(\lambda_n^2)\psi_{M_2^2}(\lambda_n^2)\frac{\sin(2t\jb{\lambda_n})}{4\jb{\lambda_n}^{3}}\varphi_n(\x_1)\varphi_n(\x_2)\Big|\\
&\les \sup_{M_1,M_2\in 2^{\Z_+}}M_1^{\eps}M_2^{\eps}\sup_{\x_1,\x_2\in\M} \Big(\sum_{n\geq 0}\psi_0(N^{-2}\lambda_n^2)^2\psi_{M_1^2}(\lambda_n^2)\psi_{M_2^2}(\lambda_n^2)\frac{\sin(2t\jb{\lambda_n})^2}{\jb{\lambda_n}^{3}}\varphi_n(\x_1)^2\Big)^{\frac12}\\
&\qquad\qquad\qquad\qquad\qquad\qquad\qquad\times\Big(\sum_{n\geq 0}\psi_{M_1^2}(\lambda_n^2)\psi_{M_2^2}(\lambda_n^2)\frac{1}{\jb{\lambda_n}^{3}}\varphi_n(\x_2)^2\Big)^{\frac12}\\
&\les \sup_{M_1\sim M_2\les N}M_1^{2\eps}\sum_{n\geq 0}\psi_{M_1^2}(\lambda_n^2)\psi_{M_2^2}(\lambda_n^2)\frac{1}{\jb{\lambda_n}^{3}}\\
&\les \sup_{M_1\les N}M_1^{2\eps-1}\le C <+\infty
\end{align*}
uniformly in $N\in\N$.

 Thus we can conclude as in the proof of Proposition~\ref{prop-z} that ${\displaystyle \E\big|(1-\Dlg)^{-\eps}\,:\!\Psi_N^k(t,x)\!:\,\big|^2}$ is uniformly bounded in $N$, from which we get a uniform bound in ${\displaystyle L^p\big(\O;L^q([0,T];W^{-\eps,\infty}(\M))\big)}$ for any $1\leq p,q<\infty$. 
 
 As for the convergence of $H_k\big(\P_N\Psi(\o,t,\x);\s_N(t,\x)\big)$, we have again for $N_1< N_2$
\begin{align*}
& \E\Big|(1-\Dlg)^{-\frac\eps2}\Big[H_k\big(\P_{N_1}\Psi(\o,t,\x);\s_{N_1}(t,\x)\big)-H_k\big(\P_{N_2}\Psi(\o,t,\x);\s_{N_2}(t,\x)\big)\Big]\Big|^2\\
 &=k!(1-\Dlg)_{\x_1}^{-\frac\eps2}(1-\Dlg)_{\x_2}^{-\frac\eps2}\Big[\big(\P_{N_1}^2\gamma^t(\x_1,\x_2)\big)^k\\
 &\qquad\qquad-2\big(\P_{N_1}\P_{N_2}\gamma^t(\x_1,\x_2)\big)^k+\big(\P_{N_2}^2\gamma^t(\x_1,\x_2)\big)^k\Big]_{\big|\x_1=\x_2=\x}\\
 &\les \Big\|\big(\P_{N_1}^2\gamma^t(\x_1,\x_2)\big)^k-2\big(\P_{N_1}\P_{N_2}\gamma^t(\x_1,\x_2)\big)^k+\big(\P_{N_2}^2\gamma^t(\x_1,\x_2)\big)^k\Big\|_{B^{-\frac{\eps}2}_{\infty,\infty}(\M)\times B^{-\frac{\eps}2}_{\infty,\infty}(\M)}.
\end{align*}
Writing as above $\P_{N_2}^2\gamma^t = \frac{t}2\P_{N_2}^2\gamma + \P_{N_2}^2\wt\gamma^t$, we then estimate for $t\in [0,T]$ the contribution of
\begin{align*}
&\Big\|\big(\P_{N_2}^2\gamma^t(\x_1,\x_2)\big)^k-\big(\P_{N_1}\P_{N_2}\gamma^t(\x_1,\x_2)\big)^k\Big\|_{B^{-\frac{\eps}2}_{\infty,\infty}(\M)\times B^{-\frac{\eps}2}_{\infty,\infty}(\M)}\\
&\les \sum_{\ell=0}^k \Big\|\big(\P_{N_2}^2\gamma(\x_1,\x_2)\big)^{\ell}\big(\P_{N_2}^2\wt\gamma^t(\x_1,\x_2)\big)^{k-\ell}\\
&-\big(\P_{N_1}\P_{N_2}\gamma(\x_1,\x_2)\big)^{\ell}\big(\P_{N_1}\P_{N_2}\wt\gamma^t(\x_1,\x_2)\big)^{k-\ell}\Big\|_{B^{-\frac{\eps}2}_{\infty,\infty}(\M)\times B^{-\frac{\eps}2}_{\infty,\infty}(\M)}\\
&\les \sup_{\ell\le k} \Big\|\Big[\big(\P_{N_2}^2\gamma(\x_1,\x_2)\big)^{\ell}-\big(\P_{N_1}\P_{N_2}\gamma(\x_1,\x_2)\big)^{\ell}\Big]\big(\P_{N_2}^2\wt\gamma^t(\x_1,\x_2)\big)^{k-\ell}\Big\|_{B^{-\frac{\eps}2}_{\infty,\infty}(\M)\times B^{-\frac{\eps}2}_{\infty,\infty}(\M)}\\
&+\Big\|\big(\P_{N_1}\P_{N_2}\gamma(\x_1,\x_2)\big)^{\ell}\Big[\big(\P_{N_2}^2\wt\gamma^t(\x_1,\x_2)\big)^{k-\ell}-\big(\P_{N_1}\P_{N_2}\wt\gamma^t(\x_1,\x_2)\big)^{k-\ell}\Big]\Big\|_{B^{-\frac{\eps}2}_{\infty,\infty}(\M)\times B^{-\frac{\eps}2}_{\infty,\infty}(\M)}\\
&\les \sup_{\ell\le k}\big( \I+\II\big).
\end{align*} 
Using again the product estimate of Corollary \ref{cor-besov} (iii), we bound
\begin{align*}
\I&\les \Big\|\big(\P_{N_2}^2\gamma(\x_1,\x_2)\big)^{\ell}-\big(\P_{N_1}\P_{N_2}\gamma(\x_1,\x_2)\big)^{\ell}\Big\|_{B^{-\frac{\eps}2}_{\infty,\infty}(\M)\times B^{-\frac{\eps}2}_{\infty,\infty}(\M)}\\
&\qquad\qquad\times\Big\|\P_{N_2}^2\wt\gamma^t(\x_1,\x_2)\Big\|_{B^{\eps}_{\infty,\infty}(\M)\times B^{\eps}_{\infty,\infty}(\M)}^{k-\ell}\\
&\les N_1^{-\wt\eps},
\end{align*}
for some $0<\wt\eps\ll\eps$. This follows from \eqref{estim-Green-diff}-\eqref{estim-Green-diff2} similarly as in the proof of Proposition \ref{prop-z}, along with the previous bound on $\wt\gamma^t$. As for $\II$, we use also the product estimate to get
\begin{align*}
\II&\les \Big\|\big(\P_{N_1}\gamma(\x_1,\x_2)\big)^{\ell}\Big\|_{B^{-\frac{\eps}2}_{\infty,\infty}(\M)\times B^{-\frac{\eps}2}_{\infty,\infty}(\M)}\\
&\qquad\qquad\times\Big\|\big(\P_{N_2}^2\wt\gamma^t(\x_1,\x_2)\big)^{k-\ell}-\big(\P_{N_1}\P_{N_2}\wt\gamma^t(\x_1,\x_2)\big)^{k-\ell}\Big\|_{B^{\eps}_{\infty,\infty}(\M)\times B^{\eps}_{\infty,\infty}(\M)},
\end{align*}
and we can gain a small negative power of $N_1$ in the second term by proceeding as for the bound on $\wt\gamma^t$ above and using that the supremum of $M_1^{2\eps-1}$ now runs over $N_1\les M_1 \les N_2$. The second contribution $\big(\P_{N_1}^2\gamma^t(\x_1,\x_2)\big)^k-\big(\P_{N_1}\P_{N_2}\gamma^t(\x_1,\x_2)\big)^k$ is estimated similarly. This shows that $\big\{H_k\big(\P_N\Psi(\o,t,\x);\s_N(t,\x)\big)\big\}_{N\in\N}$ is a Cauchy sequence in $L^p(\O;L^q([0,T];W^{-\eps,\infty}(\M)))$ for any finite $p,q\ge 1$.

Let us finally turn to the continuity property of $\Psi_N$ and $\Psi$. As in the previous section, we compute for any $h,t\in [0,T]$ and $\x\in\M$
\begin{align*}
\E\big|(1-\Dlg)^{-\eps}(\th \Psi-\Psi)(\o,t,\x)\big|^2 &= \sum_{n\geq 0}\frac{\varphi_n(\x)^2}{\jb{\lambda_n}^{2\eps}}\bigg\{\int_t^{t+h}\bigg[\frac{\sin\big((t+h-t')\jb{\lambda_n}\big)}{\jb{\lambda_n}}\bigg]^2dt' \\
&\,\,+ \int_0^{t}\bigg[\frac{\sin\big((t+h-t')\jb{\lambda_n}\big)-\sin\big((t-t')\jb{\lambda_n}\big)}{\jb{\lambda_n}}\bigg]^2dt'\bigg\}\\
&\les \sum_{n\geq 0}\frac{\varphi_n(\x)^2}{\jb{\lambda_n}^{2+2\eps}}\big\{h+t\sin\big(\frac{h\jb{\lambda_n}}{2}\big)^2\big\}\\
&\les \sum_{n\geq 0}\frac{\varphi_n(\x)^2}{\jb{\lambda_n}^{2+2\eps}}(h\jb{\lambda_n})^{\eps} \les h^{\eps},
\end{align*}
which leads as in the previous section to $\Psi\in C\big([0,T];W^{-\eps,\infty}(\M)\big)$ almost surely.

Lastly, the tail estimate is obtained through the same argument as in the previous section. This concludes the proof of Proposition \ref{prop-psi}.
\end{proof}

\section{Local well-posedness results}\label{sec-local}

\subsection{Proof of Theorems~\ref{thm-SDNLW} and~\ref{THM:LWP}}\label{subsec-SDNLW+NLW}
We begin by establishing a general local well-posedness result for a perturbed version of \eqref{NLW}. Let us consider the nonlinear wave equations with a general nonlinearity
\begin{align}\label{gNLW}
\begin{cases}
\dt^2 w +(1 -  \Dlg)  w + \nu\dt w  +  F_k(w)  = 0 \\
(w, \dt w) |_{t = 0} = (0,0)
\end{cases}
\end{align}
where
\begin{align*}
F_k(w)=w^k+\sum_{\ell=0}^{k-1} f_{\ell}w^{\ell}
\end{align*}
for some functions $f_{\ell}: \R^+\times\M \rightarrow \R$, and $\nu\in \{0,1\}$. Note that here we only consider the dynamics \eqref{gNLW} starting from zero initial data, as the data for the Cauchy problem is contained in the forcing terms $f_\ell$. This is not a restriction, as the case of a general initial data $(w,\dt w)|_{t=0}=(w_0,w_1)$ can be put in the form \eqref{gNLW} by decomposing $w = (\dt V(t)w_0 + V(t)(w_0+w_1)) + W$ where $W$ solves \eqref{gNLW} with $\wt F_k(W)=W^k+\sum_{\ell=0}^{k-1}\wt f_\ell W^\ell$ for some data $\wt f_\ell$ depending on $f_\ell$ and $(w_0,w_1)$. 
\begin{proposition}\label{prop-glocal}
There exists $\eps_0=\eps_0(k)>0$ such that if $s_1=1-\eps$ for any $0<\eps<\eps_0$, then for any $q> 1$ there exists $C>0$ such that for any $R\geq 1\geq \theta>0$, and any $f_{\ell}\in L^q\big([0,1];W^{-\frac\eps2,\infty}(\M)\big)$ with $\|f_{\ell}\|_{L^q([0,1];W^{-\frac\eps2,\infty}(\M))}\leq R$, $\ell=0,...,k-1$, if we set
\begin{align*}
\dl=C(\theta R^{-1})^{q'}\in (0,1]
\end{align*} 
then \eqref{gNLW} admits a unique solution $w\in  C\big([0,\dl];H^{s_1}(\M)\big)\cap C^1([0,\dl];H^{s_1-1}(\M))$, which satisfies 
\begin{align*}
\|(w,\dt w)\|_{C([0,\dl];\H^{s_1})}\leq \theta.
\end{align*}
Moreover, the flow map 
\begin{align*}
(f_0,...,f_{k-1})\in L^q\big([0,1];W^{-\frac{\eps}2,\infty}(\M)\big)^k \longmapsto (w,\dt w)\in  C\big([0,\dl];\H^{s_1}(\M)\big)
\end{align*} 
is continuous. Lastly, the same local well-posedness result holds if we replace $F_k$ in \eqref{gNLW} by
\begin{align}\label{FNk}
F_{N,k}(w)= \P_N\bigg((\P_Nw)^k + \sum_{\ell=0}^{k-1}f_{\ell}(\P_N w)^{\ell}\bigg),
\end{align}
uniformly in $N\in\N$.
\end{proposition}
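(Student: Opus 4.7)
The plan is to run a contraction-mapping argument for the Duhamel formulation of \eqref{gNLW}. Let $L(t)(w_0,w_1)$ denote the linear evolution by the propagator of the linear part of \eqref{gNLW}---either the damped propagator \eqref{def-V} when $\nu=1$, or $S(t)$ as in \eqref{zN} when $\nu=0$; in both cases $L(\cdot)(w_0,w_1)$ lies in $X_\dl := C([0,\dl];H^{s_1}(\M))\cap C^1([0,\dl];H^{s_1-1}(\M))$ with norm $\lesssim \|(w_0,w_1)\|_{\H^{s_1}} \le \theta$, and the Duhamel operator $V(t-t')$ maps $H^{s_1-1}(\M)\to H^{s_1}(\M)$ uniformly for $t-t'\in[0,1]$. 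The fixed point map is
\begin{align*}
\Gamma(w)(t) = L(t)(w_0,w_1) - \int_0^t V(t-t')\,F_k(w)(t')\,dt',
\end{align*}
and I would close it on the closed ball of radius $c\theta$ in $X_\dl$. By H\"older in time, $\|F_k(w)\|_{L^1([0,\dl];H^{s_1-1})} \le \dl^{1/q'}\|F_k(w)\|_{L^q_t H^{s_1-1}}$, and the factor $\dl^{1/q'}$ supplies the smallness needed to close the argument.

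The heart of the matter is the pointwise-in-time estimate
\begin{align*}
\|F_k(w(t))\|_{H^{s_1-1}(\M)} \lesssim \|w(t)\|_{H^{s_1}}^k + \sum_{\ell=0}^{k-1}\|f_\ell(t)\|_{W^{-\eps/2,\infty}(\M)}\|w(t)\|_{H^{s_1}}^\ell.
\end{align*}
For the purely polynomial part $w^k$, I would use the Sobolev embedding $H^{1-\eps}(\M)\hookrightarrow L^{2/\eps}(\M)$ on a two-dimensional compact manifold to get $w^k\in L^{2/(k\eps)}(\M)$; this embeds into $H^{-\eps}(\M)=H^{s_1-1}(\M)$ as soon as $2/(k\eps)\geq 2/(1+\eps)$, i.e. $\eps<\tfrac{1}{k-1}$. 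The lower-order terms $f_\ell w^\ell$, where $f_\ell$ has negative regularity, are the delicate ones and must be treated via the Besov product rule of Corollary~\ref{cor-besov}(iii): regarding $f_\ell\in B^{-\eps/2}_{\infty,\infty}(\M)$, one needs to place $w^\ell$ in some Besov space $B^\alpha_{p,q}(\M)$ with $\alpha>\eps/2$ so that the product lies in some $B^{-\eps/2}_{p,q}(\M)$, which in turn embeds into $H^{-\eps}(\M)$ by Corollary~\ref{cor-besov}(i)-(ii) after a small loss in regularity. To construct such a positive-regularity home for $w^\ell$, I would iterate the product rule starting from $w\in B^{1-\eps}_{2,2}(\M)$ and adjust the integrability index at each step using the embedding of Corollary~\ref{cor-besov}(ii). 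Maintaining the positivity condition $\alpha+\beta>0$ through all $k-1$ iterations while keeping the resulting $1/p\leq 1$ admissible is the main technical obstacle, and is what fixes the threshold $\eps_0=\eps_0(k)$ in the statement (with $\eps_0$ decreasing in $k$).

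The contraction estimate $\|\Gamma(w_1)-\Gamma(w_2)\|_{X_\dl}$ then follows from the same multilinear bounds applied to $w_1^k-w_2^k=(w_1-w_2)\sum_{j=0}^{k-1} w_1^{k-1-j}w_2^j$ and analogously for the lower-order differences, giving
\begin{align*}
\|F_k(w_1)-F_k(w_2)\|_{L^q_\dl H^{s_1-1}} \lesssim (\theta^{k-1}+R)\|w_1-w_2\|_{X_\dl} \lesssim R\|w_1-w_2\|_{X_\dl},
\end{align*}
since $\theta\leq 1\leq R$. Both the self-mapping condition $\dl^{1/q'}R\lesssim \theta$ and the contraction condition $\dl^{1/q'}R\ll 1$ are then ensured by $\dl = C(\theta R^{-1})^{q'}$ with $C$ sufficiently small. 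Continuity of the flow map is obtained by reapplying the same estimates at the level of differences in the data $(w_0,w_1,f_0,\dots,f_{k-1})$. Finally, the identical argument handles the truncated nonlinearity $F_{N,k}$ of \eqref{FNk} uniformly in $N$, since $\P_N$ is uniformly bounded on $L^p(\M)$ and on $B^s_{p,q}(\M)$ by Corollary~\ref{lem-boundedness-psi-delta} and Proposition~\ref{prop-norm}.
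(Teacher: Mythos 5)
Your proposal is correct and follows essentially the same approach as the paper: a Duhamel-based contraction mapping on a ball of radius $c\theta$ in $C([0,\dl];H^{s_1})\cap C^1([0,\dl];H^{s_1-1})$, estimating the $w^k$ term via a Sobolev embedding into $L^{2k}$-type spaces and the rough terms $f_\ell w^\ell$ via the Besov product rule of Corollary~\ref{cor-besov}(iii) together with H\"older in time to extract the $\dl^{1/q'}$ factor, with $\eps_0(k)$ fixed by the constraints the product rule imposes at each step. The only places you are more schematic than the paper are where it makes the $f_\ell w^\ell$ bound explicit as $\|f_\ell w^\ell\|_{B^{-\eps}_{2,2}}\lesssim\|f_\ell\|_{W^{-\eps/2,\infty}}\|w\|_{B^{\eps}_{2\ell,2}}^{\ell}$ followed by $H^{s_1}\hookrightarrow B^{\eps}_{2\ell,2}$ (which is precisely what produces the threshold $\eps<\tfrac{1}{2(k-1)}$), but the idea you describe is the same.
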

\begin{proof}
For $\dl\in (0,1]$, $\nu\in\{0,1\}$, let us define the nonlinear operator on $C\big([0,\dl];H^{s_1}(\M)\big)$ by
\begin{align*}
\Upsilon_{\dl}(w)(t)&=-\int_0^t e^{-\frac{\nu}{2}(t-t')}\frac{\sin\Big((t-t')\sqrt{1-\frac{\nu^2}{4}-\Dlg}\Big)}{\sqrt{1-\frac{\nu^2}{4}-\Dlg}}F_k(w)dt'.
\end{align*}
 We shall prove that for $\dl$ small enough, $\Upsilon_{\dl}$ defines a contraction mapping in a ball of radius $\theta$ in $C\big([0,\dl];H^{s_1}(\M)\big)\cap C^1([0,\dl];H^{s_1-1}(\M))$.
 
We use \eqref{def-func-calculus} to define and evaluate the $H^{s_1}(\M)$ norm of the operators, and that $H^{s_1}(\M)=B^{s_1}_{2,2}(\M)$, so that we get the first bound
\begin{align*}
\|(\Upsilon_{\dl}(w),\dt\Upsilon_\dl(w))\|_{C([0,\dl]; \H^{s_1})}&\les \|w^k\|_{L^{1}_{\dl} H^{s_1-1}}+\sum_{\ell=0}^{k-1}\|f_{\ell}w^{\ell}\|_{L^{1}_{\dl} H^{s_1-1}}\\
&\les\|w^{k}\|_{L^{1}_{\dl}B^{-\eps}_{2,2}}+\sum_{\ell=0}^{k-1}\|f_{\ell}w^{\ell}\|_{L^{1}_{\dl}B^{-\eps}_{2,2}}.
\end{align*}
We begin by treating the first term, which we can simply estimate by
\begin{align*}
\|w^{k}\|_{L^{1}_{\dl}B^{-\eps}_{2,2}}\les \dl\|w^k\|_{L^{\infty}_{\dl}L^2}\les \dl\|w\|_{L^{\infty}_{\dl}L^{2k}}^{k}.
\end{align*}
Thus, provided that $\eps<\frac{1}{k}$, we can use Sobolev's inequality to get the bound
\begin{align*}
\|w^k\|_{L^{1}_{\dl} H^{s_1-1}}\les \dl\|w\|_{L^{\infty}_{\dl} H^{s_1}}^{k}.
\end{align*}
As for the other terms, we now use the product rule in Corollary~\ref{cor-besov}~(iii), to get for $\ell=1,...,k-1$
\begin{align*}
\|f_{\ell}w^{\ell}\|_{L^{1}_{\dl}B^{-\eps}_{2,2}}\les \dl^{\frac1{q'}}\|f_{\ell}\|_{L^{q}_{\dl}B^{-2\eps/3}_{\infty,2}}\|w^{\ell}\|_{L^{\infty}_{\dl}B^{\eps}_{2,2}}\les \dl^{\frac1{q'}}\|f_{\ell}\|_{L^{q}_{\dl}W^{-\frac\eps2,\infty}}\|w\|_{L^{\infty}_{\dl}B^{\eps}_{2\ell,2}}^{\ell},
\end{align*}
and then use that
\begin{align*}
\|w\|_{L^{\infty}_{\dl}B^{\eps}_{2\ell,2}}\les \|w\|_{L^{\infty}_{\dl}B^{s_1}_{2,2}}
\end{align*}
for any $\ell=1,...,k-1$, provided that $\eps<\frac1{2(k-1)}$. The term for $\ell=0$ is estimated directly, so that all in all we arrive at
\begin{align*}
\|(\Upsilon_{\dl}(w),\dt\Upsilon_\dl(w))\|_{C([0,\dl]; \H^{s_1})} &\leq c_1\dl\|w\|_{L^{\infty}_{\dl} H^{s_1}}^{k}\\
&\qquad+c_2\dl^{\frac1{q'}}\sum_{\ell=0}^{k-1}\|f_{\ell}\|_{L^{q}_{\dl}W^{-\frac\eps2,\infty}}\|w\|_{L^{\infty}_{\dl} H^{s_1}}^{\ell}.
\end{align*}
In particular for $ R\geq 1\geq \theta>0$ and $\dl=C(\theta R^{-1})^{q'}$, $\Upsilon_{\dl}$ maps the ball of radius $\theta$ in itself. From the same computations, if $\Upsilon_{\dl}'$ is defined similarly to $\Upsilon_{\dl}$ with respect to another data $w_0',w_1',f_0',...,f_{k-1}'$ then we get
\begin{align}
&\|\Upsilon_{\dl}(w)-\Upsilon_{\dl}'(w')\|_{C([0,\dl]; H^{s_1})}\notag\\& \leq c_1\dl\|w-w'\|_{L^{\infty}_{\dl} H^{s_1}}\big(\|w\|_{L^{\infty}_{\dl} H^{s_1}}+\|w'\|_{L^{\infty}_{\dl} H^{s_1}}\big)^{k-1}\notag\\
&\qquad+c_2\dl^{\frac1{q'}}\|f_{0}-f_{0}'\|_{L^{q}_{\dl}W^{-\frac\eps2,\infty}}+c_3\dl^{\frac1{q'}}\sum_{\ell=1}^{k-1}\Big\{\|f_{\ell}-f_{\ell}'\|_{L^{q}_{\dl}W^{-\frac\eps2,\infty}}\|w\|_{L^{\infty}_{\dl} H^{s_1}}^{\ell}\notag\\
&\qquad+\|w-w'\|_{L^{\infty}_{\dl} H^{s_1}}\|f_{\ell}'\|_{L^{q}_{\dl}W^{-\frac\eps2,\infty}}\big(\|w\|_{L^{\infty}_{\dl} H^{s_1}}+\|w'\|_{L^{\infty}_{\dl} H^{s_1}}\big)^{\ell-1}\Big\},
\label{diff-Upsilon}
\end{align}
and similarly for the time derivative. This shows the contraction property and the continuous dependence on the $f_{\ell}$'s up to taking $\dl$ smaller depending on $c_1,c_2,c_3$. 
\end{proof}
With Proposition \ref{prop-glocal} at hand, we can now get our main local well-posedness results.
\begin{proof}[Proof of Theorems~\ref{thm-SDNLW} and~\ref{THM:LWP}.]
We begin by proving Theorem~\ref{thm-SDNLW}. Recall that we see $\Psi_{\damp}$ as a random variable on $(\H^s(\M)\times\O,\mu\otimes\Prob)$. For any $M\in\N$ we take 
\begin{align*}
\Si_{M} &= \Big\{(u_0,u_1,\o)\in\H^s(\M)\times\O,~\Psi_{\damp}\in C\big([0,1];W^{-\frac\eps2,\infty}(\M)\big) \text{ and }\forall \ell=1,...,k,\notag\\&\qquad\qquad \big\|H_{\ell}\big(\P_N\Psi_\damp(u_0,u_1,\o);\s_N(x)\big) - \,:\!\Psi_{\damp}^{\ell}(u_0,u_1,\o)\!:\,\big\|_{L^2\big([0,1];W^{-\frac{\eps}2,\infty}(\M)\big)}\to 0,\\
&\qquad\qquad \text{ and } ~\sup_{N\in\N}\big\|H_{\ell}\big(\P_N\Psi_\damp(u_0,u_1,\o);\s_N(x)\big)\big\|_{L^{2}([0,1];W^{-\frac\eps2,\infty})}\leq M\Big\}.
\end{align*}
 In view of the large deviation bounds given by Proposition~\ref{prop-psidamp}, we see that
\begin{align}\label{measure-OT}
\mu\otimes\Prob(\H^s(\M)\times\O\setminus \Si_{M})\leq Ce^{-cM^{\frac2k}}.
\end{align}
Moreover, \eqref{eq-w-r} and Proposition~\ref{prop-psidamp} show that for any $(u_0,u_1,\o)\in\Si_{M}$, we can apply Proposition~\ref{prop-glocal} with $R=M$, $\theta=1$ and $f_\ell = {\binom k \ell}H_{\ell}\big(\P_N\Psi_\damp(u_0,u_1,\o);\s_N(x)\big)$ for any $N\in\N\cup\{\infty\}$, 
with the convention that $\P_{\infty}\Psi_{\damp} = \Psi_{\damp}$ and $H_{\ell}\big(\P_\infty\Psi_\damp(u_0,u_1,\o);\s_\infty(x)\big) = \,:\!\Psi_\damp^\ell(u_0,u_1,\o)\!:\,$. We thus get solutions $w_N$ and $w_{\infty}=w$ to \eqref{eq-w-r} on $[0,T]$ with $T = CM^{-2}$ independent of $N$. Moreover since $\P_N\Psi_{\damp}\in C\big([0,T];W^{-\frac\eps2,\infty}(\M)\big)$, $N\in\N\cup\{\infty\}$, we have 
\begin{align*}
u_N=\P_N\Psi_{\damp}+w_N\in \P_N\Psi_{\damp} + C\big([0,T];H^{s_1}(\M)\big)\cap C^1\big([0,T];H^{s_1}(\M)\big).
\end{align*}
 Hence in view of Proposition \ref{prop-psidamp} we have $u_N$ and $u$ in $C\big([0,T];H^{-\eps}(\M)\big)\cap C^1\big([0,T];H^{-1-\eps}(\M)\big)$ and using again Proposition~\ref{prop-psidamp}, we get the convergences $\P_N\Psi_{\damp}\rightarrow \Psi_{\damp}$ and $w_N\rightarrow w$. From the continuous dependence in Proposition~\ref{prop-glocal}, we thus get that $u_N \rightarrow u$ in $C\big([0,T];H^{-\eps}(\M)\big)$. The proof of Theorem~\ref{thm-SDNLW} is completed by taking
\begin{align*}\Si =\liminf_{M\geq 1} \Si_{M}\end{align*} which, by \eqref{measure-OT} and Borel-Cantelli's lemma, is of full probability. The proof of Theorem~\ref{THM:LWP} follows through the same argument, with $\P_NS(t)(u_0,u_1)$ in place of $\P_N\Psi_{\damp}(u_0,u_1,\o)$ and $(\H^s(\M),\mu)$ in place of $(\H^s(\M)\times\O,\mu\otimes\Prob)$.
\end{proof}

\subsection{Deterministic estimates}
We collect here the deterministic estimates needed to prove Theorem~\ref{thm-SNLW}. Let us recall from \cite{GKO} that for $s_1\in (0,1)$, a pair $(q,r)$ is $s_1$-admissible (respectively $(\widetilde{q},\widetilde{r})$ dual $s_1$-admissible) if $1\leq \widetilde{q}<2<q\leq\infty$, $1<\widetilde{r}\leq 2 \leq r <\infty$ and
\begin{equation*}
\frac{1}{q}+\frac{2}{r}=1-s_1=\frac{1}{\widetilde{q}}+\frac{2}{\widetilde{r}}-2,\hspace{10pt}\frac{2}{q}+\frac{1}{r}\leq\frac12,\hspace{10pt}\text{ and }\hspace{10pt}\frac{2}{\widetilde{q}}+\frac{1}{\widetilde{r}}\geq\frac52.
\end{equation*}
Let us then consider the following inhomogeneous linear wave equations
\begin{align}\label{ILW}
\begin{cases}
(\dt^2+1-\Dlg)u = f \text{ on }[0,T]\times\M,\\
(u,\dt u)\big|_{t=0}=(u_0,u_1)\in \H^{s_1}(\M)
\end{cases}
\end{align}
for some $T\in (0,1]$. For $s_1\in (0,1)$ and $(q,r)$ an $s_1$-admissible pair (respectively $(\widetilde{q},\widetilde{r})$ a dual $s_1$-admissible pair), we set
\begin{align}\label{def-X}
X^{s_1}_T= C\big([0,T];H^{s_1}(\M)\big)\cap C^1\big([0,T];H^{s_1-1}(\M)\big)\cap L^q\big([0,T];L^r(\M)\big)
\end{align}
and
\begin{align*}
\widetilde{X}^{s_1}_T=  L^1\big([0,T];H^{s_1-1}(\M)\big)+ L^{\widetilde{q}}\big([0,T];L^{\widetilde{r}}(\M)\big).
\end{align*}
\begin{lemma}\label{lem-strichartz}
Let $u$ be a solution of \eqref{ILW}, then the following Strichartz estimate holds:
\begin{align}\label{Strichartz}
\|u\|_{X^{s_1}_T}\les \|(u_0,u_1)\|_{\H^{s_1}}+\|f\|_{\widetilde{X}^{s_1}_T}.
\end{align}
\end{lemma}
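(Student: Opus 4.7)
The plan is to isolate the $L^q_TL^r_x$ Strichartz component of the $X^{s_1}_T$-norm; the $C([0,T];H^{s_1})\cap C^1([0,T];H^{s_1-1})$ part follows directly from the $\H^{s_1}$-boundedness of the propagators $\cos(t\sqrt{1-\Dlg})$ and $(1-\Dlg)^{-1/2}\sin(t\sqrt{1-\Dlg})$ together with an energy estimate applied to the $L^1_TH^{s_1-1}$ component of a splitting $f=f_1+f_2\in L^1_TH^{s_1-1}+L^{\widetilde q}_TL^{\widetilde r}_x$. Everything then reduces to proving the homogeneous Strichartz inequality
\[
\|\cos(t\sqrt{1-\Dlg})u_0\|_{L^q_TL^r_x}+\Big\|\tfrac{\sin(t\sqrt{1-\Dlg})}{\sqrt{1-\Dlg}}u_1\Big\|_{L^q_TL^r_x}\les \|(u_0,u_1)\|_{\H^{s_1}}
\]
for every $s_1$-admissible pair, together with its inhomogeneous version for $f_2\in L^{\widetilde q}_TL^{\widetilde r}_x$.

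For the homogeneous estimate, the strategy is to frequency-localize via the dyadic blocks $\P_N-\P_{N/2}$ of Section~\ref{SEC:PDO} and, on each block, to construct a short-time Fourier integral parametrix for the half-wave propagator $e^{\pm it\sqrt{1-\Dlg}}$ valid on time slices of length $\sim N^{-1}$. The semiclassical calculus developed around Proposition~\ref{prop-dvp} furnishes this local description, and a stationary phase analysis on such a short interval yields the two-dimensional wave dispersive bound
\[
\|e^{\pm it\sqrt{1-\Dlg}}(\P_N-\P_{N/2})u\|_{L^\infty_x}\les N|t|^{-1/2}\|(\P_N-\P_{N/2})u\|_{L^1_x}.
\]
Interpolation with $L^2$-boundedness and the abstract Keel--Tao scheme then deliver a frequency-localized Strichartz estimate on $[0,N^{-1}]$ with the scaling exponent $N^{1-1/q-2/r}=N^{s_1}$. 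Concatenating the $\sim NT$ subintervals covering $[0,T]$ by Minkowski's inequality (valid since $q>2$) and square-summing in $N$ via Littlewood--Paley recovers the claimed bound.

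For the inhomogeneous term with $f_2\in L^{\widetilde q}_TL^{\widetilde r}_x$, I would invoke the $TT^*$ duality applied to the homogeneous estimate to obtain the non-retarded Duhamel bound, and then the Christ--Kiselev lemma to convert it into the retarded version; this is applicable since the hypotheses $\widetilde q>1$ and $q>2$ force the required strict inequality between the time exponents. The main obstacle is the short-time FIO parametrix construction for $e^{\pm it\sqrt{1-\Dlg}}$ on a general compact Riemannian surface: since $\M$ is smooth, compact and without boundary, this is the classical construction of Kapitanski and Mockenhaupt--Seeger--Sogge, adapted to the semiclassical setting in \cite{BGT}, and the framework of Section~\ref{SEC:PDO} is precisely what allows one to carry it out on each dyadic block uniformly in $N$.
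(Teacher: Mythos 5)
Your overall architecture (reduce to frequency-localized dispersive estimates, Keel--Tao, $TT^*$ plus Christ--Kiselev for the non-dual inhomogeneous pairs) is reasonable, but the core of your argument has a genuine flaw: you build the parametrix for $e^{\pm it\sqrt{1-\Dlg}}$ only on time slices of length $\sim N^{-1}$ and then concatenate $\sim NT$ of them. This is the Schr\"odinger-on-manifolds strategy of \cite{BGT}, and for the wave propagator it is both unnecessary and insufficient. On an interval of length $N^{-1}$ a frequency-$N$ wave has not yet dispersed: the correct frequency-localized bound is $\|e^{\pm it\sqrt{1-\Dlg}}(\P_N-\P_{N/2})\|_{L^1\to L^\infty}\les N^2(1+N|t|)^{-1/2}$, which exhibits no decay for $|t|\les N^{-1}$, so each slice only yields the estimate one gets from Bernstein and $L^2$-conservation, and summing the $\sim NT$ slices costs an extra factor $(NT)^{1/q}$. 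That produces Strichartz estimates with a loss of $N^{1/q}$ derivatives, not the lossless estimate \eqref{Strichartz}, and the lossless numerology is exactly what the $s_1$-admissibility conditions (and the critical exponent $\scrit$ in Theorem \ref{thm-SNLW}) are tuned to. Your stated dispersive bound $N|t|^{-1/2}$ is not correct (at $|t|\sim N^{-1}$ it asserts $N^{3/2}$ while the kernel genuinely has size $\sim N^2$), and it is precisely this overly strong bound that makes the exponents appear to close after concatenation for sharp admissible pairs; with the true bound the scheme as written does not recover $N^{s_1}$.

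The fix, and the route the paper takes, exploits what distinguishes the wave equation from Schr\"odinger: finite speed of propagation reduces matters to local coordinates, where the eikonal/FIO parametrix for the half-wave group is valid on a time interval of fixed length \emph{independent of the frequency}. On such an interval the full decay $N^2(1+N|t|)^{-1/2}$ is available, Keel--Tao then gives the frequency-localized estimate with constant exactly $N^{s_1}=N^{1-\frac1q-\frac2r}$, and one concatenates only $O(T)$ intervals, with no loss. The paper simply cites this: Lemma \ref{lem-strichartz} follows from the variable-coefficient Strichartz estimates of Kapitanskii \cite{K} and Mockenhaupt--Seeger--Sogge \cite{MSS} on $\R^2$, transferred to $\M$ by finite speed of propagation (no boundary being present). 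If you want a self-contained proof you should replace your $N^{-1}$-slicing by this fixed-time parametrix; the semiclassical calculus of Section \ref{SEC:PDO} alone (which is built around $\Psi$DOs, not FIOs --- compare Remark \ref{rk-FIO}) does not furnish it.
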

\begin{proof}
Due to the finite speed of propagation and in the absence of boundary, this follows from the same Strichartz estimates as in \cite{K,MSS} for the variable coefficients linear wave equations on $\R^2$.
\end{proof}
Next, we recall the following technical result from \cite{GKO}.
\begin{lemma}\label{lem-pair}
Let $s_1$ be as in Theorem~\ref{thm-SNLW}. Then there exist an $s_1$-admissible pair $(q,r)$ and a dual $s_1$-admissible pair $(\widetilde{q},\widetilde{r})$ satisfying
\begin{align}\label{prop-pair}
q\geq k\widetilde{q},~r\geq k\widetilde{r}
\end{align}
where the first inequality is strict in the case $s_1>\scrit$.
\end{lemma}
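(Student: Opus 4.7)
The plan is to treat this as a linear feasibility problem and to reduce it to two unknowns by using the two scaling identities. Setting $y = 1/r$ and $\widetilde y = 1/\widetilde r$, the scaling identities give $1/q = 1-s_1-2y$ and $1/\widetilde q = 3-s_1-2\widetilde y$, so all remaining conditions become linear inequalities in $(y, \widetilde y) \in \R^2$.

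I would then organize the reduced constraints as
\begin{itemize}
\item[(i)] $\widetilde y \geq ky$, coming from $r \geq k\widetilde r$;
\item[(ii)] $\widetilde y \leq ky + \tfrac{(k-1)s_1-(k-3)}{2}$, coming from $q \geq k\widetilde q$ after using the scaling identities;
\item[(iii)] $y \geq \tfrac{3-4s_1}{6}$, coming from the Strichartz admissibility $\tfrac{2}{q}+\tfrac{1}{r}\leq \tfrac{1}{2}$;
\item[(iv)] $\widetilde y \leq \tfrac{7-4s_1}{6}$, coming from the dual admissibility $\tfrac{2}{\widetilde q}+\tfrac{1}{\widetilde r}\geq \tfrac{5}{2}$;
\end{itemize}
together with box constraints on $(y, \widetilde y)$ coming from the allowed ranges of $q, r, \widetilde q, \widetilde r$. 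I would then check that compatibility of (i)–(ii) forces $s_1 \geq \sscal$; that combining (i), (iii), (iv) yields $k(3-4s_1) \leq 7-4s_1$, i.e., $s_1 \geq \sconf$; and that the box constraints (in particular $y \leq 1/2$ combined with (iii)) force $s_1 \geq 0$. Hence the feasibility polygon is non-empty precisely when $s_1 \geq \scrit$.

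For $s_1 > \scrit$, the polygon has non-empty interior, and any interior point yields strict inequalities throughout, giving in particular $\widetilde y < ky + \tfrac{(k-1)s_1-(k-3)}{2}$, i.e., the desired strict $q > k\widetilde q$. At the critical level $s_1 = \scrit$ (which, by the hypothesis of Theorem~\ref{thm-SNLW}, only arises for $k \geq 4$), the polygon collapses to a lower-dimensional set: when $\scrit = \sscal$ (so $k \geq 5$), inequality (ii) is saturated and $q = k\widetilde q$ is forced; when $\scrit = \sconf$ (so $k = 4$), inequalities (iii) and (iv) force $y$ and $\widetilde y$ to their extreme values, yielding a feasible point that satisfies the non-strict $q \geq k\widetilde q$ as required. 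The main obstacle is just the bookkeeping of these four constraints together with the box conditions on the ranges; the computation is purely arithmetic. In fact, this lemma is essentially imported from \cite{GKO}, where the analogous pairs for $\M = \T^2$ are constructed explicitly, and the construction depends only on the Strichartz admissibility structure, not on the geometry of the underlying manifold.
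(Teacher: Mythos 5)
The paper does not give a written proof: the ``proof'' is a one-line citation to \cite[Subsection 3.1]{GKO}, so there is no internal argument to compare against. Your closing observation that the lemma is imported from \cite{GKO} and depends only on the Strichartz numerology, not on the geometry of $\M$, is exactly the justification the paper is implicitly relying on, and your reduction to a linear feasibility problem in $(y,\widetilde y)=(1/r,1/\widetilde r)$ with the four inequalities (i)--(iv) is algebraically correct and in the same spirit as the explicit construction in \cite{GKO}.

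As written, though, your argument only establishes necessity: (i)--(iv) together with the box constraints imply $s_1\ge\scrit$, but the converse --- that for every admissible $s_1$ there actually exists a feasible $(y,\widetilde y)$ compatible with $1\le\widetilde q<2<q\le\infty$ and $1<\widetilde r\le 2\le r<\infty$ --- is asserted rather than verified. Two details of the deferred bookkeeping are not entirely innocuous. First, for $s_1\ge 3/4$ the bound $y\ge\tfrac{3-4s_1}{6}$ from (iii) is vacuous and no longer supplies a positive choice of $y$, so the extremal point used implicitly in your necessity computation is unavailable and a different feasible pair must be exhibited in that regime. Second, the description that the polygon ``collapses to a lower-dimensional set'' at $s_1=\scrit$ is only accurate when $\scrit=\sscal$ (i.e.\ $k\ge 5$); at $k=4$, $s_1=\sconf=5/12$, the unique point $(y,\widetilde y)=(2/9,8/9)$ forced by (i), (iii), (iv) still gives $q/\widetilde q=29/5>4$, so (ii) is slack there and $q>k\widetilde q$ holds strictly even at the critical regularity. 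These are finite arithmetic checks, but the conclusion that the feasibility polygon is non-empty ``precisely when'' $s_1\ge\scrit$ is not proved by the argument as presented.
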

\begin{proof}
This is the content of the discussion in \cite[Subsection 3.1]{GKO}.
\end{proof}
\subsection{Proof of Theorem~\ref{thm-SNLW}}
We finally prove the local result for SNLW. As above, we define for $N\in\N\cup\{\infty\}$ and $(u_0,u_1)\in\H^{s_1}(\M)$,
\begin{align*}
\Upsilon_T(w)&=\cos\big(t\sqrt{1-\Dlg}\big)u_0+\frac{\sin\big(t\sqrt{1-\Dlg}\big)}{\sqrt{1-\Dlg}}u_1\\
&\qquad -\sum_{\ell=0}^{k}{\binom k \ell}\int_0^t\frac{\sin((t-t')\sqrt{1-\Dlg})}{\sqrt{1-\Dlg}}H_{\ell}\big(\P_N\Psi(t');\s_N(t',x)\big)w^{k-\ell}(t')dt',
\end{align*}
with the same convention as above for $N=\infty$.

We then prove a result similar to \cite[Proposition 3.5]{GKO}. 
\begin{proposition}\label{prop-subcrit}
Let $k\in\N$ and $s_1$ be as in Theorem~\ref{thm-SNLW}, and take $(q,r)$ and $(\widetilde{q},\widetilde{r})$ given by Lemma~\ref{lem-pair}. Then there exist $0<\eps\ll 1$ and $\al>0$ such that for any $N\in\N\cup\{\infty\}$,
\begin{multline}\label{fixpt-SNLW}
\|\Upsilon_T(w)\|_{X^{s_1}_T} \les \|(u_0,u_1)\|_{\H^{s_1}} + \big\|H_{k}\big(\P_N\Psi(t);\s_N(t,x)\big)\big\|_{L^1_TH^{s_1-1}}\\ + T^{\al}\sum_{\ell=1}^{k-1}\big\|H_{\ell}\big(\P_N\Psi(t);\s_N(t,x)\big)\big\|_{L^p_TW^{-\frac\eps2,\infty}}\|w\|_{X^{s_1}_T}^{k-\ell}+ T^{\frac1{\widetilde{q}}-\frac{k}{q}}\|w\|_{X^{s_1}_T}^k,
\end{multline}
for some large $p$. Moreover, a similar estimate holds for the difference as in \eqref{diff-Upsilon}.
\end{proposition}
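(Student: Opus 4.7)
The plan is to apply the Strichartz estimate of Lemma~\ref{lem-strichartz} to reduce the $X^{s_1}_T$-bound on $\Upsilon_T(w)$ to bounds on the nonlinearity in the dual Strichartz norm $\widetilde{X}^{s_1}_T$. The initial-data contribution yields $\|(u_0,u_1)\|_{\H^{s_1}}$ directly, and one is left estimating each term of the binomial expansion indexed by $\ell\in\{0,\dots,k\}$. The cases $\ell=0$, $\ell=k$, and $1\le\ell\le k-1$ will give respectively the last, the second, and the third terms of \eqref{fixpt-SNLW}.

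For the purely deterministic term $\ell=0$, namely $w^k$, I would place it in $L^{\widetilde q}_T L^{\widetilde r}$. Hölder in both variables gives $\|w^k\|_{L^{\widetilde q}_T L^{\widetilde r}}\le\|w\|_{L^{k\widetilde q}_T L^{k\widetilde r}}^k$, and the conditions $q\ge k\widetilde q$, $r\ge k\widetilde r$ from Lemma~\ref{lem-pair}, together with Hölder in time and the embedding $L^r(\M)\hookrightarrow L^{k\widetilde r}(\M)$ coming from compactness of $\M$, yield
\[
\|w^k\|_{L^{\widetilde q}_T L^{\widetilde r}}\les T^{1/\widetilde q-k/q}\|w\|_{L^q_T L^r}^k\le T^{1/\widetilde q-k/q}\|w\|_{X^{s_1}_T}^k,
\]
with $1/\widetilde q-k/q\ge 0$ and strict in the subcritical regime $s_1>\scrit$. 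For the purely stochastic term $\ell=k$ (independent of $w$), I would simply place it in the $L^1_T H^{s_1-1}$ component of $\widetilde{X}^{s_1}_T$; since $s_1<1$, the embedding $W^{-\eps/2,\infty}(\M)\hookrightarrow H^{s_1-1}(\M)$ for small $\eps$ and Proposition~\ref{prop-psi} make this straightforward.

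The mixed cases $1\le\ell\le k-1$ are the heart of the estimate. Here I would bound the product $H_\ell(\P_N\Psi;\s_N)\cdot w^{k-\ell}$ in $\widetilde{X}^{s_1}_T$ by first applying the fractional product rule of Corollary~\ref{cor-besov}(iii) pointwise in time with $\alpha=-\eps/2$ on the stochastic factor and a positive Besov exponent $\beta>\eps/2$ on $w^{k-\ell}$, so that $\alpha+\beta>0$. The integrability exponents are tuned to land in the dual Strichartz pair $(\widetilde q,\widetilde r)$, using Corollary~\ref{cor-besov}(i)--(ii) to convert $\|w^{k-\ell}\|_{B^\beta_{\cdot,\cdot}}$ into $\|w\|_{L^q_TL^r}^{k-\ell}$ by Hölder and Sobolev embedding on the compact manifold $\M$. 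This produces
\[
\|H_\ell(\P_N\Psi;\s_N)\,w^{k-\ell}\|_{\widetilde{X}^{s_1}_T}\les T^{\al}\|H_\ell(\P_N\Psi;\s_N)\|_{L^p_T W^{-\eps/2,\infty}}\|w\|_{X^{s_1}_T}^{k-\ell}
\]
for some finite $p$ and some $\al>0$, the $T^{\al}$ gain coming from Hölder in time to pass from $L^p_T\times L^{k\widetilde q/(k-\ell)}_T$ into $L^{\widetilde q}_T$.

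The main obstacle is precisely the juggling of these exponents: one must simultaneously afford the $-\eps/2$ spatial loss on the stochastic factor (fixed by the probabilistic regularity of Proposition~\ref{prop-psi}), preserve the $L^q_T L^r$ Strichartz control on $w$, and generate a strictly positive power of $T$ to close the contraction. The admissibility conditions of Lemma~\ref{lem-pair} are what makes this possible; in the critical case $s_1=\scrit$ only the $\ell=0$ term fails to give a $T^\al$ gain, and one must rely on the smallness of $\|w\|_{L^q_T L^r}$ (on short time intervals, by absolute continuity of the $L^q_T$ norm) to close the argument. Finally, the difference estimate analogous to \eqref{diff-Upsilon} follows by the standard telescoping identity
\[
\prod_{i=1}^{k-\ell}a_i-\prod_{i=1}^{k-\ell}b_i=\sum_{j=1}^{k-\ell}a_1\cdots a_{j-1}(a_j-b_j)b_{j+1}\cdots b_{k-\ell},
\]
applied to each term in the binomial expansion, giving Lipschitz dependence in $w$ and continuity in the enhanced stochastic data $\{H_\ell(\P_N\Psi;\s_N)\}_{\ell=1}^{k-1}$ with the same constants (up to a factor $k$).
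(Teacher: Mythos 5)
Your outline reproduces the paper's argument faithfully: after applying the Strichartz estimate of Lemma~\ref{lem-strichartz}, you split the binomial expansion into $\ell=0$ (handled by H\"older in $L^{\widetilde q}_TL^{\widetilde r}$ using $q\ge k\widetilde q$, $r\ge k\widetilde r$), $\ell=k$ (placed in $L^1_TH^{s_1-1}$, which is the intended reading of the proposition), and the mixed terms $1\le\ell\le k-1$ handled by the product rule of Corollary~\ref{cor-besov}(iii) and H\"older in time, followed by a telescoping identity for the difference estimate. This is the paper's route.

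The one place your sketch glosses over a load-bearing step is in the mixed terms. You say the exponents are "tuned to land in the dual Strichartz pair $(\widetilde q,\widetilde r)$," but after the product rule the expression sits in a Besov space of \emph{negative} spatial regularity $-\eps/2$, and no amount of tuning of integrability indices will put a $-\eps/2$-regular object into $L^{\widetilde r}$. What is actually needed — and what the paper does — is to interpolate between the two components of $\widetilde X^{s_1}_T$: one shows $L^{\widetilde q_1}_TW^{-\eps,\widetilde r_1}\subset\widetilde X^{s_1}_T$ (mixing $L^1_TH^{s_1-1}$ with $L^{\widetilde q}_TL^{\widetilde r}$) and dually $X^{s_1}_T\subset L^{q_1}_TW^{\eps,r_1}$, with the explicit exponents in \eqref{eq-q}--\eqref{eq-qtilde}. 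The product is then placed in $L^{\widetilde q_1}_TW^{-\eps,\widetilde r_1}$, not $L^{\widetilde q}_TL^{\widetilde r}$, and it is the strict inequalities of Lemma~\ref{lem-pair} together with the room created by taking $\eps$ small that yield the $T^\al$ gain via $(k-1)\widetilde q_2<q_1$ and $(k-1)\widetilde r_1\le r_1$. Without spelling out this interpolated intermediate space, a reader following your recipe would stall; everything else in your plan, including the remark about the critical case $s_1=\scrit$ and the smallness of $\|w\|_{L^q_TL^r}$, is on target.
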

\begin{proof}
The linear solution with the term for $\ell=k$ in $\Upsilon_T$ are directly estimated with the Strichartz estimate \eqref{Strichartz} of Lemma~\ref{lem-strichartz} to give the first two terms in the right-hand side of \eqref{fixpt-SNLW}.

 As for the term $\ell=0$, we have from the Strichartz estimate \eqref{Strichartz} and H\"older's inequality with \eqref{prop-pair}
 \begin{align*}
 \bigg\|\int_0^t\frac{\sin((t-t')\sqrt{1-\Dlg})}{\sqrt{1-\Dlg}}w^{k}(t')dt'\bigg\|_{X^{s_1}_T}\les \|w^k\|_{\widetilde{X}^{s_1}_T} \les \|w\|_{L^{k\widetilde{q}}_TL^{k\widetilde{r}}}^k \les T^{\frac1{\widetilde{q}}-\frac{k}{q}}\|w\|_{X^{s_1}_T}^k.
\end{align*}  
 
 Hence it remains to show
\begin{align*}
&\bigg\|\int_0^t\frac{\sin((t-t')\sqrt{1-\Dlg})}{\sqrt{1-\Dlg}}H_{\ell}\big(\P_N\Psi(t');\s_N(t',x)\big)w^{k-\ell}(t')dt'\bigg\|_{X^{s_1}_T}\\
&\qquad\les T^\al \big\|H_{\ell}\big(\P_N\Psi(t);\s_N(t,x)\big)\big\|_{L^p_TW^{-\frac\eps2,\infty}} \|w\|_{X^{s_1}_T}^{k-\ell},
\end{align*}
for $\ell=1,...,k-1$. As in \cite[Proposition 3.5]{GKO}, by interpolation we have for any $0<\eps< s_1\wedge (1-s_1)$
\begin{align}\label{embedding}
\widetilde{X}^{s_1}_T \supset L^{\widetilde{q_1}}\big([0,T];W^{-\eps,\widetilde{r_1}}(\M)\big) ~\text{ and }~ L^{q_1}\big([0,T];W^{\eps,r_1}(\M)\big) \supset X^{s_1}_T,
\end{align}
with
\begin{align}\label{eq-q}
\frac{1}{q_1}=\frac{1-\eps/s_1}{q} ~\text{ and }~\frac{1}{r_1}=\frac{1-\eps/s_1}{r}+\frac{\eps/s_1}{2},
\end{align}
and
\begin{align}\label{eq-qtilde}
\frac{1}{\widetilde{q_1}}= \frac{1-\eps/(1-s_1)}{\widetilde{q}}+\frac{\eps/(1-s_1)}{1} ~\text{ and }~ \frac{1}{\widetilde{r_1}}=\frac{1-\eps/(1-s_1)}{\widetilde{r}}+\frac{\eps/(1-s_1)}{2}.
\end{align}
Then, using Lemma~\ref{lem-strichartz} with the first embedding in \eqref{embedding}, we have 
\begin{align*}
&\bigg\|\int_0^t\frac{\sin((t-t')\sqrt{1-\Dlg})}{\sqrt{1-\Dlg}}H_{\ell}\big(\P_N\Psi(t');\s_N(t',x)\big)w^{k-\ell}(t')dt'\bigg\|_{X^{s_1}_T}\\ &\les \big\|H_{\ell}\big(\P_N\Psi(t);\s_N(t,x)\big)w^{k-\ell}\big\|_{\widetilde{X}^{s_1}_T}\\&  \les \big\|H_{\ell}\big(\P_N\Psi(t);\s_N(t,x)\big)w^{k-\ell}\big\|_{L^{\widetilde{q_1}}_TW^{-\eps,\widetilde{r_1}}}\\
&\les \big\|H_{\ell}\big(\P_N\Psi(t);\s_N(t,x)\big)w^{k-\ell}\big\|_{L^{\widetilde{q_1}}_TB^{-\frac\eps2}_{\widetilde{r_1},\infty}}.
\end{align*}
 Next, we can use Corollary~\ref{cor-besov}~(iii) and (i) with H\"older's inequality to estimate this last term with
\begin{align*}
&\big\|H_{\ell}\big(\P_N\Psi(t);\s_N(t,x)\big)\big\|_{L^{1/(\frac1{\widetilde{q_1}}-\frac1{\widetilde{q_2}})}_TB^{-\frac\eps2}_{\infty,\infty}} \|w^{k-\ell}\|_{L^{\widetilde{q_2}}_TB^{\frac{2\eps}3}_{\widetilde{r_1},\infty}}\\ & \les \big\|H_{\ell}\big(\P_N\Psi(t);\s_N(t,x)\big)\big\|_{L^{1/(\frac1{\widetilde{q_1}}-\frac1{\widetilde{q_2}})}_TW^{-\frac\eps2,\infty}} \|w\|_{L^{(k-\ell) \widetilde{q_2}}_TW^{\eps,(k-\ell) \widetilde{r_1}}}^{\ell},
\end{align*}
where $\widetilde{q_1}<\widetilde{q_2}<\widetilde{q}$. The proof of Proposition~\ref{prop-subcrit} is then completed once we notice that for $\ell\ge 1$,
\begin{align*}
\|w\|_{L^{(k-\ell) \widetilde{q_2}}_TW^{\eps,(k-\ell) \widetilde{r_1}}} \les T^\al\|w\|_{L^{q_1}_TW^{\eps,r_1}}
\end{align*}
for some small $\al >0$ provided that $(k-1)\widetilde{q_2}<q_1$ and $(k-1)\widetilde{r_1}\leq r_1$, which can be insured by taking $\eps$ small enough in view of the choice of $\widetilde{q_2}$ and \eqref{prop-pair}-\eqref{eq-q}-\eqref{eq-qtilde}. Lastly, we invoke the second embedding in \eqref{embedding} to conclude the proof of the proposition.
\end{proof}
With this proposition at hand, we can conclude as in Subsection~\ref{subsec-SDNLW+NLW} in the subcritical case $s>\scrit$, with a stopping time $T=T_\o(\|(u_0,u_1)\|_{\H^{s_1}})>0$. However, in the case $k\geq 4$ and $s=\scrit$ then we have $T^{\frac1{\widetilde{q}}-\frac{k}{q}} = 1$ and so we cannot recover the contraction property by taking $T=T_\o(\|(u_0,u_1)\|_{\H^{s_1}})$ small enough. Instead, defining as in \cite{GKO} the slightly weaker norm
\begin{align*}
\|u\|_{Y^{s_1}_T}= \max\big(\|u\|_{L^q_TL^r}, \|u\|_{L^q_TL^r}^{1-\frac{\eps}{s_1}}\|u\|_{L^{\infty}_TH^{s_1}}^{\frac{\eps}{s_1}}\big),
\end{align*}
we can repeat the argument as in the proof of Proposition~\ref{prop-subcrit} using the interpolation inequality $\|u\|_{L^{q_1}_TW^{\eps,r_1}}\les \|u\|_{Y^{s_1}_T}$, to get
\begin{align*}
\|\Upsilon_T(w)\|_{Y^{s_1}_T} &\les \|S(t)(u_0,u_1)\|_{Y^{s_1}_T} + \big\|H_{k}\big(\P_N\Psi(t);\s_N(t,x)\big)\big\|_{L^1_TH^{s_1-1}}\\ &~~+ T^\al\sum_{\ell=1}^{k-1}\big\|H_{\ell}\big(\P_N\Psi(t);\s_N(t,x)\big)\big\|_{L^p_TW^{-\frac\eps2,\infty}}\|w\|_{Y^{s_1}_T}^{k-\ell}+ \|w\|_{Y^{s_1}_T}^k,
\end{align*}
and similarly for the difference estimate. Since $\|\cdot\|_{Y^{s_1}_T}\rightarrow 0$ as $T\rightarrow 0$, taking then $T$ small enough such that 
\begin{align*}
\|S(t)(u_0,u_1)\|_{Y^{s_1}_T} +  \big\|H_{k}\big(\P_N\Psi(t);\s_N(t,x)\big)\big\|_{L^1_TH^{s_1-1}} \leq \frac{\theta}2
\end{align*}
 for some small $0<\theta\ll 1$, then $\Upsilon_T$ defines a contraction on the ball of radius $\theta$ (in $Y^{s_1}_T$). Lastly, repeating again the argument to obtain \eqref{fixpt-SNLW} with the interpolation inequality we can control
\begin{align*}
\|w\|_{X^{s_1}_T}=\|\Upsilon_T(w)\|_{X^{s_1}_T}&\les \|(u_0,u_1)\|_{\H^{s_1}} + \big\|H_{k}\big(\P_N\Psi(t);\s_N(t,x)\big)\big\|_{L^1_TH^{s_1-1}}\\ &~~+ T^\al\sum_{\ell=1}^{k-1}\big\|H_{\ell}\big(\P_N\Psi(t);\s_N(t,x)\big)\big\|_{L^p_T W^{-\frac\eps2,\infty}}\|w\|_{Y^{s_1}_T}^{k-\ell}+ \|w\|_{Y^{s_1}_T}^k,
\end{align*}
which shows that $w\in X^{s_1}_T$ and concludes the proof of Theorem~\ref{thm-SNLW}.

\section{Global well-posedness and invariance of the Gibbs measure}\label{sec-global}
In this last section, we present the proof of Theorem~\ref{thm-invariance}, the one for Theorem~\ref{thm-global} following through the same argument. In the rest of the section, we then assume that $k$ is an odd integer, and we fix some parameters $s<0<s_2<s_1<1+s$ such that $0<-s< 1-s_1 < 1-s_2 \ll 1$. We also simply denote $\Psi_\damp$ by $\Psi$.
\subsection{The frequency truncated SDNLW}
As in \cite{BO96,Tz,BT2,BTT2}, for any $N\in\N$ and $k\ge 2$ we look at the approximating equation
\begin{align}\label{FNLW}
\begin{cases}
\big(\dt^2+1-\Dlg + \dt\big)u + \P_NH_k\big(\P_Nu_N;\s_N(x)\big) = \sqrt{2}\xi,\\
(u,\dt u)\big|_{t=0}=(u_0,u_1).
\end{cases}
\end{align}
Note that the same argument as in the previous section shows $\mu\otimes\Prob$-almost sure local well-posedness for \eqref{FNLW}, thus defining a local flow map
\begin{align*}
\Phi^N(t) : \H^s(\M)\times \O \to \H^s(\M).
\end{align*}
We have the following global well-posedness result for \eqref{FNLW}.
\begin{proposition}\label{prop-FNLW}
For any $N\in\N$, \eqref{FNLW} is $\mu\otimes\Prob$-almost surely globally well-posed. Moreover, the truncated Gibbs measure
\begin{align}
d\rho_{N,k+1}=Z_N^{-1}\exp\bigg(-\frac{1}{k+1}\int_{\M}\,:\!(\P_N u)^{k+1}\!:\,dx\bigg)d\mu,
\label{rhoN}
\end{align} 
is invariant under \eqref{FNLW}, in the sense that for any $F\in C_b(\H^s(\M);\R)$ and any $t\ge 0$,
\begin{align*}
\int_{\H^s(\M)}\int_\O F\big[\Phi^N(t)(u_0,u_1,\o)\big]d\Prob(\o)d\rho_{N,k+1}(u_0,u_1) = \int_{\H^s(\M)}F(u_0,u_1)d\rho_{N,k+1}(u_0,u_1).
\end{align*}
\end{proposition}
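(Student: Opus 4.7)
The approach is to exploit a natural low/high frequency decomposition of \eqref{FNLW}. By \eqref{prop-PiN}, the nonlinearity $\P_N H_k(\P_N u;\s_N(x))$ depends only on $u_N := \Pi_N u$ and takes values in $E_N$. Writing $u = u_N + u_N^\perp$ with $u_N^\perp := (1-\Pi_N) u$, the equation \eqref{FNLW} splits into
\begin{align*}
\begin{cases}
(\dt^2 + 1 - \Dlg + \dt) u_N + \P_N H_k(\P_N u_N; \s_N(x)) = \sqrt{2}\, \Pi_N \xi, \\
(\dt^2 + 1 - \Dlg + \dt) u_N^\perp = \sqrt{2}\, (1-\Pi_N) \xi,
\end{cases}
\end{align*}
where the two equations are driven by independent noises (orthogonal projections of a Gaussian white noise). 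The high-frequency part is an autonomous linear stochastic damped wave equation, globally well-posed as an explicit Gaussian process; moreover, $(1-\Pi_N)_\star \mu$ is invariant under its flow by essentially the same generator computation as in the proof of Proposition \ref{prop-psidamp}.

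For the low-frequency part, which is a finite-dimensional SDE on $E_N \oplus E_N \cong \R^{2\Ld_N}$ with $\Ld_N = \dim E_N$, local well-posedness is standard since the drift is polynomial in the Galerkin coefficients. Following the computation in Proposition \ref{prop-psidamp}, the infinitesimal generator $\L_N$ splits as $\L_N^H + \L_N^{OU}$, where $\L_N^{OU}$ is an Ornstein-Uhlenbeck operator acting only on the velocity variable and preserving $(\Pi_N)_\star \mu_1$, while $\L_N^H$ generates the Hamiltonian flow for the truncated energy
\begin{align*}
\En_N(u_N, v_N) = \tfrac{1}{2}\int_\M (v_N^2 + |\nabla u_N|^2 + u_N^2)\, dx + \tfrac{1}{k+1}\int_\M :\!(\P_N u_N)^{k+1}\!:\, dx.
\end{align*}
By conservation of $\En_N$ and Liouville's theorem on $E_N \oplus E_N$, $\L_N^H$ preserves the finite-dimensional Gibbs measure $(\Pi_N)_\star \rho_{N,k+1}$, and combined with the OU invariance this yields full invariance of $(\Pi_N)_\star \rho_{N,k+1}$ under the local finite-dimensional flow. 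Bourgain's invariant measure argument as in \cite{BO96,BT2,BTT2,Tz} then promotes this to global well-posedness of the finite-dimensional SDE: invariance under the local flow together with the Markov property rules out finite-time blow-up on a set of full $(\Pi_N)_\star \rho_{N,k+1}\otimes\Prob$-measure.

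Combining the two parts gives $\mu\otimes\Prob$-almost sure global well-posedness of \eqref{FNLW}. For the invariance of $\rho_{N,k+1}$, the key observation is that the Radon-Nikodym derivative $d\rho_{N,k+1}/d\mu$ depends only on $\Pi_N u$, and $\mu$ itself factorizes over the eigenmodes, so we have the product structure
\begin{align*}
\rho_{N,k+1} = (\Pi_N)_\star \rho_{N,k+1} \otimes (1-\Pi_N)_\star \mu.
\end{align*}
Since the low- and high-frequency dynamics are decoupled and driven by independent noises, and each factor is invariant under its respective flow, the product measure $\rho_{N,k+1}$ is invariant under $\Phi^N(t)$ for all $t \ge 0$. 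The main technical subtlety lies in making Bourgain's globalization argument rigorous in the stochastic setting: one must show quantitatively that invariance of $(\Pi_N)_\star \rho_{N,k+1}$ under the local flow, combined with the strong Markov property of the SDE, precludes blow-up before any prescribed time $T$ outside a set of measure zero, so that iterating yields global existence almost surely.
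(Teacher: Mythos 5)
Your low/high frequency splitting of \eqref{FNLW} into a decoupled finite-dimensional SDE on $E_N\times E_N$ and a linear SPDE on $E_N^\perp\times E_N^\perp$, the product structure $\rho_{N,k+1}=(\Pi_N)_\star\rho_{N,k+1}\otimes(1-\Pi_N)_\star\mu$, and the generator computation $\L_N=\L_N^H+\L_N^{OU}$ all match the paper's argument for the invariance half of the statement. The genuine gap is in the global well-posedness half. You invoke ``Bourgain's invariant measure argument'' to promote local to global well-posedness for the truncated finite-dimensional SDE, but this puts the cart before the horse: Bourgain's scheme, as deployed in \cite{BO96,BT2,BTT2,Tz} and in Section 5.2 of this very paper, takes as \emph{input} globally well-posed approximating flows whose measures are rigorously invariant, and outputs almost-sure global well-posedness for the limiting infinite-dimensional problem. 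It does not itself prove global well-posedness for the Galerkin approximants; that step must be supplied by a direct a priori estimate, precisely because the formal identity $\L_N^\#\rho_{N,k+1}=0$ only yields rigorous invariance of $\rho_{N,k+1}$ under the flow once one already knows the flow does not explode.

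Moreover the obvious Lyapunov candidate here is delicate. The truncated Gibbs energy on $E_N\times E_N$ carries the Wick-ordered potential $\frac{1}{k+1}\int_\M H_{k+1}(\P_N u;\s_N(x))\,dx$, which is \emph{not} bounded below because Hermite polynomials take negative values; so the standard Khasminskii non-explosion criterion does not directly apply to it. The paper circumvents this by decomposing $u_N=\Psi+w_N$ and working with the energy
\begin{align*}
\En_N(w_N)=\frac12\int_\M\big\{(\dt w_N)^2+|\nabla w_N|^2+w_N^2\big\}\,dx+\frac{1}{k+1}\int_\M(\P_N w_N)^{k+1}\,dx,
\end{align*}
whose potential is the \emph{un-renormalized} even power $(\P_N w_N)^{k+1}\ge 0$ (here $k$ odd is essential), and then bounds $\frac{d}{dt}\En_N$ via Gronwall. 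Even so, the cross terms coupling $w_N$ to $\Psi$ carry too high a power of $w_N$ for a naive Gronwall estimate; the paper handles them with the integration-by-parts trick of Oh--Pocovnicu \cite{OP} applied to $\int\dt w_N\cdot(\P_N w_N)^{k-\ell}H_\ell(\P_N\Psi;\s_N)$, which transfers a time derivative from $w_N$ onto $\Psi$. This energy argument---in particular the passage to the $w_N$ variable and the Oh--Pocovnicu trick---is the missing ingredient in your proposal; without it, the assertion that invariance under the ``local flow'' plus the Markov property precludes blow-up remains unjustified.
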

\begin{proof}
After expanding the solution to \eqref{FNLW} as $u_N = \Psi + w_N$ and writing the equation for $w_N$, we can apply Proposition~\ref{prop-glocal} above to get local well-posedness for $w_N$, for all $N\in\N$, in the sense that there exists some stopping time $T_N$ almost surely positive such that there exists a unique solution $w_N\in C([0,T_N];H^{s_1}(\M))\cap C^1([0,T_N];H^{s_1-1}(\M))$ to
\begin{align}\label{FwN}
\begin{cases}
(\dt^2+1-\Dlg+\dt)w_N + F_{N,k}(w_N)=0,\\
(w_N,\dt w_N)|_{t=0}=(0,0),
\end{cases}
\end{align}
where $F_{N,k}$ is as in \eqref{FNk} with $f_{\ell}=H_{\ell}\big(\P_N\Psi(u_0,u_1,\o);\s_N(x)\big)$, and with $s_1$ as in Proposition \ref{prop-glocal} (with $\eps$ replaced by $-s$). Thus justifies that the local flow map
\begin{align*}
\Phi^N(t) : (u_0,u_1,\o) \mapsto \big(\Psi(u_0,u_1,\o)(t) + w_N(t),\dt\Psi(u_0,u_1,\o)(t) + \dt w_N(t)\big)
\end{align*}
is indeed almost surely well-defined on $[0,T_N]$ for some $T_N=T_N(u_0,u_1,\o)>0$.

 Then, defining the energy
\begin{align*}
\En_N=\frac12\int_{\M}\big\{(\dt w_N)^2 + |\nabla w_N|^2 + w_N^2\big\}dx + \frac{1}{k+1}\int_{\M}(\P_Nw_N)^{k+1}dx,
\end{align*}
we can use \eqref{FwN} and \eqref{Hsum} to compute
\begin{align*}
\frac{d}{dt}\En_N &= -\|\dt w_N\|_{L^2}^2 +\big\langle \dt w_N, \P_NH_k\big(\P_Nw_N+\P_N\Psi;\s_N(x)\big))-\P_N(\P_Nw_N)^k\big\rangle\\
&\le\Big\langle \dt w_N, \P_NH_k\big(\P_N\Psi;\s_N(x)\big)\Big\rangle\\
&\qquad\qquad+\sum_{\ell=1}^{k-1}{\binom k \ell}\Big\langle \dt w_N,\P_N\Big[(\P_Nw_N)^{k-\ell}H_{\ell}\big(\P_N\Psi;\s_N(x)\big)\Big]\Big\rangle,
\end{align*}
where $\langle \cdot,\cdot\rangle$ is the usual inner product in $L^2(\M)$. With $\En_N(0)=0$, this gives
\begin{align}\label{En}
\En_N(t) &\le  \int_0^t\Big\langle \dt w_N(t'), \P_NH_k\big(\P_N\Psi(t');\s_N(x)\big)\Big\rangle dt'\notag\\
&\qquad+\sum_{\ell=1}^{k-1}{\binom k \ell}\int_0^t\Big\langle \dt w_N(t'),\P_N\Big[\big(\P_Nw_N(t')\big)^{k-\ell}H_{\ell}\big(\P_N\Psi(t');\s_N(x)\big)\Big]\Big\rangle dt'.
\end{align}

The first term in the right-hand side of \eqref{En} can be estimated via Cauchy-Schwarz and Young's inequalities to get the bound
\begin{align*}
&\Big|\int_0^t\Big\langle \dt w_N(t'), \P_NH_k\big(\P_N\Psi(t');\s_N(x)\big)\Big\rangle dt'\Big|\\&\les \int_0^t\|\dt w_N(t')\|_{L^2}^2dt' + \big\|\P_NH_k\big(\P_N\Psi(t');\s_N(x)\big)\big\|_{L^2_tL^2}^2\\
&\les \int_0^t\En_N(t')dt' + C(N,t)
\end{align*}
for some constant $C(N,t)$ almost surely finite for any finite $N$ and $t$. In the second step we used that for fixed $N\in\N$, $\P_NH_k\big(\P_N\Psi(t');\s_N(x)\big)$ is indeed smooth with $L^2(\R_+;L^2(\M))$-norm depending on (and blowing-up with) $N$, and that $k$ is odd so that the potential part of the energy is non-negative.

As for the terms in the sum above, even though we work with $N$ fixed and do not need to have bounds uniform in $N$, the homogeneity in the terms on the second line of \eqref{En} does not allow us to conclude directly by a crude estimate on these terms and Gronwall's inequality when\footnote{When $k=3$, the integration by part trick is not needed, and one can instead use the argument of \cite{BT3}.} $k\ge 5$. Thus we use the integration by part trick of \cite{OP} to get for $1\le \ell \le k-1$:
\begin{align*}
&\int_0^t\Big\langle \dt w_N(t'),\P_N\Big[\big(\P_Nw_N(t')\big)^{k-\ell}H_{\ell}\big(\P_N\Psi(t');\s_N(x)\big)\Big]\Big\rangle dt'\\
&  = \int_0^t\Big\langle \frac1{k-\ell+1}\dt (\P_Nw_N(t'))^{k-\ell+1},H_{\ell}\big(\P_N\Psi(t');\s_N(x)\big)\Big\rangle dt'\\
& = c\Big\langle (\P_Nw_N(t))^{k-\ell+1},H_{\ell}\big(\P_N\Psi(t);\s_N(x)\big)\Big\rangle\\
&\qquad\qquad - c\int_0^t\Big\langle (\P_Nw_N(t'))^{k-\ell+1},\dt\P_N\Psi_N(t') H_{\ell-1}\big(\P_N\Psi(t');\s_N(x)\big)\Big\rangle dt'
\end{align*} 
where we used \eqref{Hdif} in the last step. The first term can be bounded by
\begin{align*}
&\Big|\Big\langle (\P_Nw_N(t))^{k-\ell+1},H_{\ell}\big(\P_N\Psi(t);\s_N(x)\big)\Big\rangle\Big|\\ &\les \|\P_Nw_N(t)\|_{L^{k-\ell+1}}^{k-\ell+1}\big\|H_{\ell}\big(\P_N\Psi(t);\s_N(x)\big)\big\|_{L^\infty}\\
&\le \frac{\eps}{k+1} \|\P_Nw_N(t)\|_{L^{k+1}}^{k+1} + C(\eps)\big\|H_{\ell}\big(\P_N\Psi(t);\s_N(x)\big)\big\|_{L^\infty}^{\frac{k+1}{\ell}}\\
& \le \eps\En_N(t) + C(\eps)\big\|H_{\ell}\big(\P_N\Psi(t);\s_N(x)\big)\big\|_{L^\infty}^{\frac{k+1}{\ell}}
\end{align*} 
by using the compactness of $\M$ and Young's inequality (since $1\le \ell \le k-1$), the definition of $\En_N$ and the same remark as above, for any $0<\eps\ll 1$ so that we can absorb the term with $\En_N(t)$ in the left-hand side of \eqref{En}. Note that from the proof of Proposition \ref{prop-psidamp} we have that the second term is bounded by $C(N,t)$ which is almost surely finite for any finite $t\ge 0$.

As for the other term, we have as above
\begin{align*}
&\Big|\int_0^t\Big\langle (\P_Nw_N(t'))^{k-\ell+1},\dt\P_N\Psi_N(t') H_{\ell-1}\big(\P_N\Psi(t');\s_N(x)\big)\Big\rangle dt'\Big|\\
 &\les \int_0^t\int_{\M}(\P_Nw_N(t'))^{k+1}dxdt' + \int_0^t\|\dt\Psi_N(t')\|_{L^\infty}^{\frac{k+1}\ell}\big\|H_{\ell-1}\big(\P_N\Psi(t');\s_N(x)\big)\big\|_{L^\infty}^{\frac{k+1}\ell}dt'\\
 &\les \int_0^t\En_N(t')dt' + C(N,t).
\end{align*}

 Hence using Gronwall's inequality with $\En_N(0)=0$, we deduce that
\begin{align*}
\sup_{t<T_N}\En_N(t) \les C(N,T_N) <\infty
\end{align*}
almost surely on the set $\{T_N<\infty\}$. Finally, using again that $k$ is odd, we conclude that
\begin{align*}
\sup_{t<T_N}\|(w_N(t),\dt w_N(t))\|_{\H^{1}}^2 \le \sup_{t<T_N}\En_N(t)  <\infty
\end{align*} 
almost surely on $\{T_N<\infty\}$. This shows that $w_N$ exists globally, and so does $u_N$.

As for the invariance of $\rho_{N,k+1}$ under the flow $\Phi^N$ of \eqref{FNLW}, we can write 
\begin{align*}
\Phi^N(t,\o) = \big(\Pi_N\Phi^N(t,\o), (1-\Pi_N)(\Psi,\dt\Psi)(t,\o)\big) \text{ on }(E_N\times E_N)\oplus (E_N^{\perp}\times E_N^{\perp}),
\end{align*} 
when we see $(\Psi(t,\o),\dt\Psi(t,\o))$ as a measurable map from $\H^s(\M)$ to $\H^s(\M)$.

First, for the linear part $(1-\Pi_N)(\Psi(t,\o),\dt\Psi(t,\o))$, we can repeat the argument of Proposition \ref{prop-psidamp} to get it leaves the Gaussian measure $(1-\Pi_N)_\star\mu$ invariant; indeed, we have for any $F\in C_b(\H^s(\M);\R)$ and initial data $(u_0,u_1)$ with law $(1-\Pi_N)_\star\mu$:
\begin{align*}
&\int_{\H^s(\M)}\int_\O F\big[(1-\Pi_N)\vec\Psi(t,u_0,u_1,\o)\big]d\Prob(\o)d\mu(u_0,u_1)\\
&=\lim_{M\to\infty}\int_{\H^s(\M)}\int_\O F\big[\P_M(1-\Pi_N)\vec\Psi(t,u_0,u_1,\o)\big]d\Prob(\o)d\mu(u_0,u_1)\intertext{by the dominated convergence theorem, where $\vec\Psi=(\Psi,\dt\Psi)$. Then from the same argument as in the proof of Proposition \ref{prop-psidamp} we have that $[\P_M(1-\Pi_N)]_\star\mu$ is invariant for $\P_M(1-\Pi_N)(\Psi(t,\o),\dt\Psi(t,\o))$, so we can continue with}
&=\lim_{M\to\infty}\int_{\H^s(\M)}F\big[\P_M(1-\Pi_N) u_0,\P_M(1-\Pi_N)u_1)\big]d\mu(u_0,u_1)\\
&=\int_{\H^s(\M)}F\big[(1-\Pi_N) u_0,(1-\Pi_N)u_1)\big]d\mu(u_0,u_1).
\end{align*}
This shows the invariance of $(1-\Pi_N)_\star\mu$ under $(1-\Pi_N)(\Psi(t,\o),\dt\Psi(t,\o))$.

 On the other hand, decomposing $\Pi_Nu_N=\sum_{\lambda_n\leq N}a_n\varphi_n$ and $\Pi_N\dt u_N = \sum_{\lambda_n\leq N}b_n\varphi_n$, we can write $\Pi_N\Phi^N$ as the flow of the finite-dimensional system of stochastic differential equations (SDEs) on $\R^{2\Ld_N}$:
\begin{align}\label{abNLW}
\begin{cases}
da_n = b_ndt\\
{\displaystyle db_n = \Big[-\jb{\lambda_n}^2a_n - \Big\langle \P_NH_k\Big(\P_N\sum_{n_1=0}^{\Ld_N-1}
a_{n_1}\varphi_{n_1};\s_N(x)\Big),\varphi_n\Big\rangle  - b_n\Big]dt + \sqrt{2}d\beta_n(t)}
\end{cases}
\end{align}
for $n=0,...,\Ld_N-1$, where as in Proposition~\ref{prop-psidamp} we define $\Ld_N = \dim E_N$. If we redefine the truncated energy
\begin{align*}
\En_N(a_0,...,a_{\Ld_N-1},b_0,...,b_{\Ld_N-1})&\deff \frac12\sum_{n=0}^{\Ld_N-1} (\jb{\lambda_n}^2a_n^2+b_n^2)\\
&\qquad + \frac{1}{k+1}\int_{\M}H_{k+1}\big(\P_N\sum_{n=0}^{\Ld_N-1}a_n\varphi_n(x);\s_N(x)\big)dx,
\end{align*}
we can repeat the argument of the proof of Proposition~\ref{prop-psidamp} with $\En_N$ instead of $\En_{0,N}$ to get that the truncated Gibbs measure $Z_N^{-1}e^{-G_{N,k+1}}(\Pi_N)_{\star}\mu$, with the density $e^{-G_{N,k+1}}$ as in Lemma~\ref{lem-Gibbs} and the partition function $Z_N$, is invariant under the dynamics of \eqref{abNLW}. All in all, this shows that the full dynamics $\Phi^N = \big(\Pi_N\Phi^N,(1-\Pi_N)(\Psi,\dt\Psi)\big)$ for \eqref{FNLW} leaves $\rho_{N,k+1} = Z_N^{-1}e^{-G_{N,k+1}}(\Pi_N)_\star\mu\otimes(1-\Pi_N)_\star\mu$ invariant.
\end{proof}

\subsection{Proof of Theorem \ref{thm-invariance}}
We now prove the almost sure global existence for \eqref{SDNLW} and the invariance of the Gibbs measure. We begin by constructing a set of arbitrary small complementary probability on which we have good control on the solution to \eqref{FNLW}. We follow closely \cite{BTT2} (see also \cite{HM} for the argument in the context of stochastic equations).

For $N\in\N$, recall that $\Phi^N(t)$ is the global stochastic flow map of \eqref{FNLW} given by Proposition \ref{prop-FNLW}, and take $(u_0,u_1)$ with law $\rho_{N,k+1}$. Note that $\Phi^N(t)(u_0,u_1)$ still exists globally for $\rho_{N,k+1}\otimes\Prob$-almost every $(u_0,u_1,\o)$ since $\rho_{N,k+1}\ll\mu$. By Proposition \ref{prop-FNLW}, we thus have that for any $t_0\ge 0$, the law of $\Phi^N(t_0)(u_0,u_1)$ is also given by $\rho_{N,k+1}$. Moreover, since $B$ in \eqref{STWN} is a cylindrical Wiener process on $L^2(\M)$, we also have that for any $t_0\ge 0$, the translation $t\mapsto t+t_0$ defines a measure-preserving transformation $\tau_{t_0}$ on $(\O,\Prob)$ given by 
\begin{align}\label{Markov}
B(t,\tau_{t_0}(\o))= B(t+t_0,\o)-B(t_0,\o).
\end{align} 
 We can thus extend $\Phi^N(t):\H^s(\M)\times \O\to \H^s(\M)$ as a measure-preserving map
\begin{align*}
\wt\Phi^N(t): \begin{cases}
\big(\H^s(\M)\times \O,\rho_{N,k+1}\otimes\Prob\big) \to \big(\H^s(\M)\times \O,\rho_{N,k+1}\otimes\Prob\big)\\
(u_0,u_1,\o)\mapsto \big(\Phi^N(t)(u_0,u_1,\o),\tau_t(\o)\big).
\end{cases}
\end{align*}
We then have the following control on $\Phi^N(t)$.
\begin{proposition}\label{prop-Si}
There exists $C>0$ such that for all $m,N\in\N$, there exists a measurable set $\Si_N^{m}\subset \H^s(\M)\times\O$ such that
\begin{align}\label{measure-Si}
\rho_{N,k+1}\otimes\Prob(\H^s(\M)\times\O\setminus\Si_N^{m})\leq 2^{-m},
\end{align}
and for all $(u_0,u_1,\o)\in\Si_N^{m}$ and $t\ge 0$, the solution $\Phi^N(t)(u_0,u_1,\o)$ to \eqref{FNLW} satisfies
\begin{align}
\big\|\Phi^N(t)(u_0,u_1,\o)\big\|_{\H^{s}} \leq C\big(m+\log(1+t)\big)^{\frac{k}2}.
\label{estim-Si}
\end{align}

\end{proposition}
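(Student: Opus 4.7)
The proof implements the stochastic analogue of Bourgain's invariant measure argument, in the spirit of \cite{BT2,BTT2}. The key input is the measure-preserving property of the extended flow $\wt\Phi^N$ on $(\H^s(\M)\times\O,\rho_{N,k+1}\otimes\Prob)$, which follows from the invariance of $\rho_{N,k+1}$ under $\Phi^N(t)$ (Proposition~\ref{prop-FNLW}) combined with the translation invariance of the cylindrical Wiener process $B$. Together with the cocycle identity $\Phi^N(t_0+s)=\Phi^N(s)\circ\wt\Phi^N(t_0)$, this lets us transfer any one-step probabilistic estimate from time $0$ to time $t_0$.

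First, I would prove a deterministic one-step bound: there exist $\alpha, c_0, C_0>0$ such that for every $R\geq 1$, $t_0\geq 0$ and triple $(u_0,u_1,\o)$ satisfying $\|\Phi^N(t_0)(u_0,u_1,\o)\|_{\H^s}\leq R$ together with
\begin{align*}
\sup_{1\leq\ell\leq k}\big\|H_\ell\big(\P_N\Psi^{(t_0)};\s_N\big)\big\|_{L^q([0,\tau(R)];W^{-\eps/2,\infty})}\leq R,
\end{align*}
where $\tau(R)=c_0 R^{-\alpha}$ and $\Psi^{(t_0)}$ is the damped linear stochastic wave evolution restarted at time $t_0$ from data $(\Phi^N(t_0),\dt\Phi^N(t_0))$ with the shifted noise $B(t_0+\cdot)-B(t_0)$, one has
\begin{align*}
\sup_{t\in[t_0,t_0+\tau(R)]}\|\Phi^N(t)(u_0,u_1,\o)\|_{\H^s}\leq C_0 R.
\end{align*}
By the cocycle identity we may assume $t_0=0$, and then write $\Phi^N(s)=\Psi^{(0)}(s)+\tilde w(s)$ with $(\tilde w(0),\dt\tilde w(0))=(0,0)$; the function $\tilde w$ solves the perturbed equation \eqref{eq-w-r} with Wick-stochastic coefficients of size at most $R$, so Proposition~\ref{prop-glocal} applies (with $\theta\sim 1$), and the triangle inequality with the trivial bound $\|\Psi^{(0)}\|_{\H^s}\les R$ on $[0,\tau(R)]$ closes the estimate.

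Next, I would set $T_0=0$, $T_{j+1}=T_j+\tau(R_j)$, and $R_j=C(m+\log(1+T_j))^{k/2}$ for a large constant $C=C(k)$, and define the bad event
\begin{align*}
A_j=\Big\{\|\Phi^N(T_j)(u_0,u_1,\o)\|_{\H^s}>R_j\Big\}\cup\Big\{\sup_{1\leq\ell\leq k}\big\|H_\ell\big(\P_N\Psi^{(T_j)};\s_N\big)\big\|_{L^q([0,\tau(R_j)];W^{-\eps/2,\infty})}>R_j\Big\}.
\end{align*}
The measure preservation of $\wt\Phi^N$, combined with $\rho_{N,k+1}\ll\mu$ with Radon-Nikodym derivative bounded in every $L^p(\mu)$ (Lemma~\ref{lem-Gibbs}) and the tail estimates of Proposition~\ref{prop-psidamp} applied to the restarted process (whose Wick powers have the same law as those of $\P_N\Psi$ by $\mu$-invariance under the linear damped flow), yield
\begin{align*}
(\rho_{N,k+1}\otimes\Prob)(A_j)\les e^{-cR_j^{2/k}}\les e^{-cC^{2/k}(m+\log(1+T_j))},
\end{align*}
the slower decay $e^{-cR^{2/k}}$ of the Wick powers being the bottleneck. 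Since $T_j\to\infty$ polynomially (up to logarithms) in $j$, summing in $j$ gives a convergent series bounded by $\les e^{-cC^{2/k}m}\leq 2^{-m}$ for $C$ large enough. Taking $\Si_N^m=(\H^s(\M)\times\O)\setminus\bigcup_j A_j$ yields \eqref{measure-Si}, and applying the one-step bound on each interval $[T_j,T_{j+1}]$ for $(u_0,u_1,\o)\in\Si_N^m$ then yields \eqref{estim-Si} with final constant $C_0 C$.

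The main technical obstacle is the one-step control, since Proposition~\ref{prop-glocal} is stated only for small initial data ($\theta\leq 1$), whereas we need to accommodate states of size $R\gg 1$ in $\H^s$. This is circumvented by the restart decomposition $u_N=\Psi^{(t_0)}+\tilde w$, which absorbs the large rough component entirely into $\Psi^{(t_0)}$ (free to be of size $R$ in $\H^s$) and lets $\tilde w$ start from zero with $H^{s_1}$-regularity, so that Proposition~\ref{prop-glocal} applies verbatim on each subinterval. The companion fact that the renormalization variance $\s_N(x)$ is preserved under restart follows from the $t$-independence of the spatial covariance $\gamma_N$ established in the proof of Proposition~\ref{prop-psidamp}, which ultimately rests on the $\mu$-invariance of the linear stochastic damped flow.
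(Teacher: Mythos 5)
Your overall strategy is the correct one — the stochastic analogue of Bourgain's invariant measure argument via measure preservation of the extended flow $\wt\Phi^N$, tail estimates on the stochastic objects, and Cauchy--Schwarz with the uniform $L^2(\mu)$-bound on the density $e^{-G_{N,k+1}}$ — and this is essentially the route the paper takes. Your time-discretization with variable step size $\tau(R_j)$ is a legitimate alternative to the paper's scheme (which, for each dyadic scale $j$, takes a fixed step $\dl = D^{-4}(m+j)^{-2k}$ on $[0,2^j]$ and then intersects $\Si_N^{m,j}$ over $j$), and condition \eqref{condition3} of the paper's set $\B_N^{m,j}(D)$ is indeed only needed for Proposition~\ref{prop-approx}, so you are right to omit it here.

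However, there is a genuine gap in your one-step bound. You decompose $\Phi^N(s)=\Psi^{(t_0)}(s)+\tilde w(s)$ and invoke ``the trivial bound $\|\Psi^{(t_0)}\|_{\H^s}\lesssim R$ on $[0,\tau(R)]$.'' This is not a consequence of the two conditions you place in $A_j$. The restarted process $\Psi^{(t_0)}(s)$ is, per \eqref{def-Psidamp}, the sum of the linear propagation of the data $(\Phi^N(t_0),\dt\Phi^N(t_0))$ and the fresh stochastic convolution $\sqrt{2}\int_{t_0}^{t_0+s}V(t_0+s-t')\,dB(t')$. Your condition (1) controls the first piece (since $V$ and $\dt V$ are bounded on $\H^s$), but it says nothing about the stochastic integral, which is a new random object independent of $\Phi^N(t_0)$. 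Your condition (2) controls only $\P_N\Psi^{(t_0)}$, in $L^q_t W^{-\eps/2,\infty}$, and so gives neither a sup-in-time bound, nor a bound on the untruncated $\Psi^{(t_0)}$ in $\H^s$ (frequencies above $N$ are unconstrained, and they matter for the $\H^s$-norm with $s<0$). Consequently the one-step bound, as stated, cannot be established, and the deterministic part of your argument breaks down precisely where you claim the triangle inequality closes the estimate. The fix is to enlarge $A_j$ by the event $\big\{\|(\Psi^{(T_j)},\dt\Psi^{(T_j)})\|_{C([0,\tau(R_j)];\H^s)}>R_j\big\}$; this is precisely what the paper's condition \eqref{condition1} does (applied, via the measure-preserving push-forward under $\wt\Phi^N(a\dl)$, to the restarted process), and its probability is controlled by the Gaussian tail bound \eqref{tail-sup} of Proposition~\ref{prop-psidamp}, which decays like $e^{-cR^2}$ and thus does not affect the dominant $e^{-cR^{2/k}}$ rate you already identified. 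With this addition, your condition (1) in fact becomes redundant, since it is the value of the new condition at $s=0$.
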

\begin{proof}
First, we recall that $\Psi=\Psi_{\damp}$ is the stochastic process on $\H^s(\M)\times\O$ defined by
\begin{align*}
\Psi(t,u_0,u_1,\o) = \dt V(t)u_0 + V(t)(u_0+u_1) + \sqrt{2}\int_{0}^tV(t-t')dB^\o(t').
\end{align*}
Then, for $m,j\in \N$, we set 
\begin{align}\label{dlmj}
\dl = D^{-4}(m+j)^{-2k}
\end{align}
given by Proposition \ref{prop-glocal} with $R=D(m+j)^{\frac{k}2}$, $\theta = R^{-1}$ and $q=2$, for some $D\gg 1$ independent of $N,m,j$ to be fixed later, such that as in Proposition \ref{prop-glocal} in the nonlinear estimates we have $C\dl^{\frac1{q'}}R \le \frac12$ for various constants $C$ such as in \eqref{diff-Upsilon}. 

Next, as in \cite{BTT2} (see also \cite{BO94,BO96}), we can define
\begin{align*}
\Si_{N}^{m,j} &\deff \bigcap_{a=0}^{[2^j/\dl]}\wt\Phi^N(a\dl)^{-1}\Big(\B_N^{m,j}(D)\Big)
\end{align*}
where $[2^j/\dl]$ denotes the integer part of $2^j/\dl$, and
\begin{align}
\B_N^{m,j}(D)&\deff \Big\{(u_0,u_1,\o)\in \H^s(\M)\times\O,\notag\\&\qquad\big\|\big(\Psi(u_0,u_1,\o),\dt\Psi(u_0,u_1,\o)\big)\big\|_{C([0,1];\H^s)} \le D(m+j)^{\frac{k}2},\label{condition1}\\
&\qquad\forall \ell=1,...,k,~\big\|H_{\ell}\big(\P_N\Psi(u_0,u_1,\o);\s_N(x)\big)\big\|_{L^2([0,1];W^{s,\infty})}\le D(m+j)^{\frac{k}2},\label{condition2}\\
&\qquad\big\|H_{\ell}\big(\P_M\Psi(u_0,u_1,\o);\s_M(x)\big)-H_{\ell}\big(\P_N\Psi(u_0,u_1,\o);\s_N(x)\big)\big\|_{L^2([0,1]; W^{s,\infty})}\notag\\
&\qquad\qquad\le M^{-\eps}D(m+j)^{\frac{k}2},~~\forall M\le N\Big\},
\label{condition3}
\end{align} 
for some $0<\eps\ll -s$ in view of \eqref{tail-z-zN}, and $D\gg 1$ is to be taken large enough but independent of $m,j,N$. In particular, note that using \eqref{Markov} and Proposition \ref{prop-glocal} with the choice of $\dl$ in \eqref{dlmj}, for any $a=0,...,[2^j/\dl]$ and $\wt\Phi^N(a\dl)(u_0,u_1,\o)\in \B_N^{m,j}(D)$ we have that 
\begin{align*}
w_{N,a}(t) \deff \Phi^N(t+a\dl)(u_0,u_1,\o)-\Psi(t,\wt\Phi^N(a\dl)(u_0,u_1,\o))
\end{align*}
satisfies
\begin{align}
\|(w_{N,a},\dt w_{N,a})\|_{C([0,\dl];\H^{s_1})}\le D^{-1}(m+j)^{-\frac{k}2},
\label{bd-wN}
\end{align}
where $s_1=1+2s$. This implies that for any $a=0,...,[2^j/\dl]$ and any $(u_0,u_1,\o)\in \Phi^N(a\dl)^{-1}\B_N^{m,j}(D)$, the use of \eqref{condition1} and \eqref{bd-wN} leads to
\begin{align}
&\big\|\Phi^N(t+a\dl)(u_0,u_1,\o)\big\|_{C([0,\dl];\H^{s})}\notag\\
&\le \big\|\big(\Psi(t,\wt\Phi^N(a\dl)(u_0,u_1,\o)),\dt\Psi(t,\wt\Phi^N(a\dl)(u_0,u_1,\o))\|_{C([0,\dl];\H^s)}+\|(w_{N,a},\dt w_{N,a})\|_{C([0,\dl];\H^{s_1})}\notag\\
&\le D(m+j)^{\frac{k}2}+D^{-1}(m+j)^{-\frac{k}2} \le D(m+j+1)^{\frac{k}2}
\label{bduN}
\end{align}
provided that $D$ is large enough.

Next, using that $\wt\Phi^N(t): \big(\H^s(\M)\times \O,\rho_{N,k+1}\otimes\Prob\big) \to \big(\H^s(\M)\times \O,\rho_{N,k+1}\otimes\Prob\big)$ is measure-preserving, we can estimate
\begin{align*}
&\rho_{N,k+1}\otimes\Prob\big(\H^s(\M)\times\O\setminus\Si_{N}^{m,j})\\&\leq \sum_{a=0}^{[2^j/\dl]}\rho_{N,k+1}\otimes\Prob\Big\{\wt\Phi^N(a\dl)^{-1}\Big(\H^s(\M)\times\O\setminus B_N^{m,j}(D)\Big)\Big\}\\
&\les \frac{2^j}{\dl}\rho_{N,k+1}\otimes\Prob\Big(\H^s(\M)\times\O\setminus B_N^{m,j}(D)\Big)\intertext{Using Cauchy-Schwarz inequality with the uniform (in $N$) integrability property of the density $e^{-G_{N,k+1}(u_0)}$ of $\rho_{N,k+1}$ given by Lemma \ref{lem-Gibbs}, we can continue with}
&\les\frac{2^j}{\dl}\|e^{-G_{N,k+1}}\|_{L^2(\mu_0)}\mu\otimes\Prob\Big(\H^s(\M)\times\O\setminus B_N^{m,j}(D)\Big)^{\frac12}\\
&\les\frac{2^j}{\dl}\bigg\{\mu\otimes\Prob\Big(\big\|\big(\Psi(u_0,u_1,\o),\dt\Psi(u_0,u_1,\o)\big)\big\|_{C([0,1];\H^s)} > D(m+j)^{\frac{k}2}\Big)^{\frac12}\\
&\qquad + \sum_{\ell=1}^k\mu\otimes\Prob\Big(\big\|H_{\ell}\big(\P_N\Psi(u_0,u_1,\o);\s_N(x)\big)\big\|_{L^2([0,1];W^{s,\infty})}> D(m+j)^{\frac{k}2}\Big)^{\frac12}\\
&\qquad+\sum_{\ell=1}^k\sum_{M\le N}\mu\otimes\Prob\Big(\big\|H_{\ell}\big(\P_M\Psi(u_0,u_1,\o);\s_M(x)\big)\\
&\qquad\qquad-H_{\ell}\big(\P_N\Psi(u_0,u_1,\o);\s_N(x)\big)\big\|_{L^2([0,1];W^{s,\infty})}\notag > M^{-\eps}D(m+j)^{\frac{k}2}\Big)^{\frac12}\bigg\}.
\end{align*}
Using the tail estimates \eqref{tail-z}, \eqref{tail-z-zN} (which also hold for $\Psi$) and \eqref{tail-sup} given by Proposition \ref{prop-psidamp} together with \eqref{dlmj}, we can finally bound for some $0<\wt\eps\ll\eps\ll -s$
\begin{align}
\rho_{N,k+1}\otimes\Prob\big(\H^s(\M)\times\O\setminus\Si_{N}^{m,j}\big)& \les 2^jD^{4}(m+j)^{2k}\Big\{ e^{-cD^2(m+j)^k}\notag\\
&+\sum_{\ell=1}^k\Big(e^{-cD^{\frac2\ell}(m+j)^{\frac{k}\ell}}+\sum_{M\le N}e^{-cM^{\wt\eps}D^{\frac2\ell}(m+j)^{\frac{k}\ell}}\Big)\Big\}\notag\\
&\les 2^jD^{4}(m+j)^{2k}e^{-cD^{\frac2k}(m+j)}\le 2^{-(m+j)}
\label{Si1}
\end{align}
for $D\gg 1$, independently of $N,m,j$.
 
Next, we define
\begin{align*}
\Si_N^m \deff \bigcap_{j=1}^\infty \Si_N^{m,j}.
\end{align*}
With this definition, we see that \eqref{measure-Si} is a direct consequence of \eqref{Si1}. Moreover, for any $(u_0,u_1,\o)\in\Si_N^m$ and $t\ge 0$, if $j\in\N$ is such that $2^{j-1}< 1+t \le 2^j$, then  \eqref{estim-Si} follows from \eqref{bduN} since $(u_0,u_1,\o)\in \Si_N^{m,j}$.
\end{proof}
We can now finish the proof of the global existence. Let us set
\begin{align*}
\Si^m = \limsup_{N\rightarrow\infty}\Si_{N}^{m}
\end{align*}
and
\begin{align*}
\Si = \bigcup_{m\in\N}\Si^m.
\end{align*}
First, we show that $\Si$ is of full $\rho_{k+1}\otimes\Prob$-measure. From Fatou's lemma we get for any $m\in\N$
\begin{align*}
\rho_{k+1}\otimes\Prob(\Si^m)&\geq \limsup_{N\rightarrow\infty}\rho_{k+1}\otimes\Prob(\Si_N^m)\intertext{Using next the convergence of the density $e^{-G_{N,k+1}}$ of $\rho_{N,k+1}$ to that of $\rho_{k+1}$ given by Lemma \ref{lem-Gibbs} (ii), and \eqref{Si1}, we deduce the lower bound}
&\ge \limsup_{N\rightarrow\infty} \rho_{N,k+1}\otimes\Prob(\Si_N^m) \ge 1-\sum_{j\in\N}2^{-(m+j)}.
\end{align*}
This proves that 
\begin{align*}
\rho_{k+1}\otimes \Prob(\Si) \ge \lim_{m\to\infty} \rho_{k+1}\otimes\Prob(\Si^m) \ge 1- \lim_{m\to\infty}\sum_{j\in\N}2^{-(m+j)} = 1.
\end{align*}

Now for any $(u_0,u_1,\o)\in \Si$, we have by construction that there exists $m\in\N$, $C>0$ and a sequence $N_p\rightarrow\infty$ such that for all $j,p\in\N$ and all $0\le t\le 2^j$,
\begin{align}
\|\Phi^{N_p}(t)(u_0,u_1,\o)\|_{\H^{s}} \le CD(m+j+1)^{\frac{k}2}.
\label{global-bd1}
\end{align}
Thus the global well-posedness part of Theorem~\ref{thm-invariance} follows from the following proposition.
\begin{proposition}\label{prop-approx}
Let $m,j\in\N$, $N_p\to\infty$ and $(u_0,u_1,\o) \in\bigcap_{p\in\N}\Si_{N_p}^{m,j}$. Then $\big\{\Phi^{N_p}(t)(u_0,u_1,\o)-\Psi(t)(u_0,u_1,\o)\big\}_{p\in\N}$ is a Cauchy sequence in $C([0,2^j];\H^{s_2}(\M))$. In particular, $\big\{\Phi^{N_p}(t)(u_0,u_1,\o)\big\}_{p\in\N}$ is a Cauchy sequence in $C([0,2^j];\H^s(\M))$. Here $s<0<s_2<s_1<1+s$ with $0<-s<1-s_1<1-s_2\ll 1$.
\end{proposition}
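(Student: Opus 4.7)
The plan is to propagate the difference $(\Phi^{N_p}-\Phi^{N_q})(t)$ across the short intervals $[a\delta,(a+1)\delta]$, $a=0,1,\dots,A\deff[2^j/\delta]$, where $\delta=D^{-4}(m+j)^{-2k}$ is the uniform local time built into the definition of $\Si^{m,j}_N$, and then iterate. First note that since the stochastic convolution $\Psi$ in~\eqref{def-Psidamp} does not depend on $N$, we have $\Phi^{N_p}(t)-\Phi^{N_q}(t)=(w_{N_p}-w_{N_q},\dt w_{N_p}-\dt w_{N_q})(t)$ with $w_N\deff u_N-\Psi$; and by $(u_0,u_1,\o)\in\bigcap_{p}\Si^{m,j}_{N_p}$ together with Proposition~\ref{prop-glocal}, for each $a$ and $p$ the restarted smooth remainder $\wt w^{a}_{N_p}(\tau)\deff u_{N_p}(a\delta+\tau)-\Psi^{a,N_p}(\tau)$, where $\Psi^{a,N_p}$ is the linear damped stochastic evolution from $(u_{N_p}(a\delta),\dt u_{N_p}(a\delta))$ with translated noise $\tau_{a\delta}\o$, satisfies the uniform bound $\|\wt w^a_{N_p}\|_{C([0,\delta])H^{s_1}}\les D^{-1}(m+j)^{-k/2}$ and vanishes together with its time derivative at $\tau=0$.

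On each subinterval I would write the Duhamel equation for $\wt w^{a}_{N_p}-\wt w^{a}_{N_q}$ and estimate its $\H^{s_2}$-norm via Lemma~\ref{lem-strichartz} applied to the damped semigroup, following the difference analysis underlying~\eqref{diff-Upsilon}. Expanding the nonlinear difference via~\eqref{Hsum} splits the right-hand side into three groups: (i) terms linear in $\wt w^a_{N_p}-\wt w^a_{N_q}$, absorbed by the same contraction mechanism as in Proposition~\ref{prop-glocal}; (ii) Wick-power differences $H_\ell(\P_{N_p}\Psi^{a,N_p};\s_{N_p})-H_\ell(\P_{N_q}\Psi^{a,N_q};\s_{N_q})$, which are reconstructed by inserting the common intermediate $H_\ell(\P_M\Psi^{a,N_{p/q}};\s_M)$ at $M=\min(N_p,N_q)$---two applications of~\eqref{condition3} then yield a bound $\les M^{-\eps}D(m+j)^{k/2}$ for the pure truncation-level changes, while the residual cross term $H_\ell(\P_M\Psi^{a,N_p};\s_M)-H_\ell(\P_M\Psi^{a,N_q};\s_M)$ is expanded by~\eqref{Hsum} in powers of $\P_M(\Psi^{a,N_p}-\Psi^{a,N_q})$, this last being the linear damped-wave evolution of $(\Phi^{N_p}-\Phi^{N_q})(a\delta)$ and thus bounded in $C\H^{s_2}$ by its $\H^{s_2}$-norm at $a\delta$; and (iii) the outer projection difference $(\P_{N_p}-\P_{N_q})H_k(\P_{N_q}\Psi^{a,N_q};\s_{N_q})$, spectrally localized at frequencies $\gtrsim M$ and therefore estimated in a Strichartz-dual norm by a negative power of $M$ times the uniform $L^2W^{s,\infty}$-bound of~\eqref{condition2}, the margin coming precisely from $s_2-1<s$. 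Putting everything together via the Besov product rules of Corollary~\ref{cor-besov}(iii) gives the per-step recursion
\begin{align*}
\|(\Phi^{N_p}-\Phi^{N_q})((a+1)\delta)\|_{\H^{s_2}}\le C_0\,\|(\Phi^{N_p}-\Phi^{N_q})(a\delta)\|_{\H^{s_2}}+C_1\,M^{-\wt\eps}(m+j)^{k}
\end{align*}
for some $\wt\eps>0$ and constants $C_0,C_1$ depending only on $k$, $s$, $s_1$, $s_2$.

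Iterating from the vanishing initial datum $(\Phi^{N_p}-\Phi^{N_q})(0)=0$ then yields
\begin{align*}
\sup_{t\in[0,2^j]}\|(\Phi^{N_p}-\Phi^{N_q})(t)\|_{\H^{s_2}}\les C_1\,A\,C_0^A\,M^{-\wt\eps}(m+j)^{k}\to 0
\end{align*}
as $M=\min(N_p,N_q)\to\infty$, since $A$, $C_0$, $C_1$, and $(m+j)^k$ are all bounded once $m$ and $j$ are fixed; combined with the per-subinterval Strichartz control this yields the Cauchy property in $C([0,2^j];\H^{s_2}(\M))$. The main obstacle is category (ii): the convergence estimate~\eqref{condition3} only controls different spectral truncations of the \emph{same} stochastic object, whereas $\Psi^{a,N_p}\ne\Psi^{a,N_q}$ because the restart data differ. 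The cross term must be recovered through the common-truncation insertion together with the expansion~\eqref{Hsum} and the Besov product estimates of Corollary~\ref{cor-besov}(iii), at the price of a small regularity loss which is permitted precisely by the choice $s_2<s_1<1+s$.
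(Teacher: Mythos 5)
Your proposal is correct and follows the same overall strategy as the paper's proof: inductive propagation of the flow difference over subintervals of length $\delta = D^{-4}(m+j)^{-2k}$, absorption of the terms linear in the $w$-difference by the contraction mechanism of Proposition~\ref{prop-glocal}, control of the Wick powers and their truncation mismatches via conditions \eqref{condition2}--\eqref{condition3}, the $N^{s_2-s_1}$ gain from the regularity gap $s_2<s_1$ on the frequency-projection mismatch, and propagation of the $\H^{s_2}$-difference of the restart data through the bounded linear damped-wave propagator. The one organizational difference is the algebra of the decomposition on $[a\delta,(a+1)\delta]$. The paper first splits the flow difference as $\big[\Phi^M(\cdot)\wt\Phi^M(a\delta)-\Phi^N(\cdot)\wt\Phi^M(a\delta)\big]+\big[\Phi^N(\cdot)\wt\Phi^M(a\delta)-\Phi^N(\cdot)\wt\Phi^N(a\delta)\big]$, so that in the first bracket the restart data coincide (hence $\Psi^a$ cancels and only the truncation index varies), while in the second bracket the truncation index is fixed at $N$ (so there is no projection or variance mismatch, only restart data variation). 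You instead vary both parameters at once and reinsert intermediates $H_\ell(\P_M\Psi^{a,\bullet};\s_M)$ directly inside the Wick powers. This is workable but slightly heavier: to control the $H_i(\P_M\Psi^{a,N_q};\s_M)$ factors produced by the Hermite expansion you must combine \eqref{condition2} with one more application of \eqref{condition3}, and the expansion makes your per-step recursion polynomial, not linear, in $\|(\Phi^{N_p}-\Phi^{N_q})(a\delta)\|_{\H^{s_2}}$, so the induction also needs the a priori smallness of that quantity for large $M$ (which holds, but should be stated). The paper's two-bracket split sidesteps both points. Either way the final bound is $C(m,j,2^j/\delta)\,(N^{s_2-s_1}+N^{-\eps})$ and the Cauchy property follows.
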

Note that contrary to \cite{BTT2}, we prove convergence for $\Phi^{N_p}(t)(u_0,u_1,\o)-\Psi(t)(u_0,u_1,\o)$ instead of $\P_{N_p}\Phi^{N_p}(t)(u_0,u_1,\o)$, as in \cite{BO96}. This allows us to prove the convergence in the stronger topology of $\H^{s_2}(\M)$ instead of $\H^s(\M)$, which is used to control the difference between the flow initiated at $\Phi^{N_p}(a\dl)(u_0,u_1,\o)$ and at $\Phi^N(a\dl)(u_0,u_1,\o)$ for $a=1,...,[2^j/\dl]$. 

First, note that since our general local well-posedness result in Proposition~\ref{prop-glocal} is robust enough, we can use the same argument as for the proof of Theorem~\ref{thm-SDNLW} in the previous section, with the truncated dynamics \eqref{FNLW1} in place of \eqref{trunc-SDNLW}, to get that the limit $\Phi(t)(u_0,u_1,\o)=(u,\dt u) = \lim_{N\to\infty}\Phi^N(t)(u_0,u_1,\o)$ exists in $C([0,T];\H^s(\M))$ on a set $\wt\Si$ of full $\mu\otimes\Prob$-probability, for a random time $T=T(u_0,u_1,\o)$ $\mu\otimes\Prob$-almost surely positive, and coincides with the local solution constructed in Theorem~\ref{thm-SDNLW}. Then we use the previous proposition to construct iteratively $\Phi(t)(u_0,u_1,\o)$ on larger and larger time intervals. Indeed, up to replacing $\Si$ by $\Si\cap\wt\Si$, which is still of full probability, we can use Proposition \ref{prop-approx} along with the definition of $\Si$, to get that for any $(u_0,u_1,\o)\in\Si$, there exists $m\in\N$ and $N_p\to\infty$ as $p\to\infty$ such that $(u_0,u_1,\o)\in \bigcap_{p,j\in\N}\Si_{N_p}^{m,j}$. In view of the previous proposition, it follows from \eqref{global-bd1} that for any $t\ge 0$
\begin{align*}
\|\Phi(t)(u_0,u_1,\o)\|_{\H^{s}}&=  \lim_{p\to\infty} \big\|\Phi^{N_p}(t)(u_0,u_1,\o)\big\|_{\H^{s}} \leq CD\big(m+\log(1+t)\big)^{\frac{k}2}.
\end{align*}
In particular $\Phi(t)(u_0,u_1,\o)$ is globally defined for any $(u_0,u_1,\o)\in\Si$. 

The invariance of $\rho_{k+1}$ then follows directly from the invariance of $\rho_{N_p,k+1}$ under \eqref{FNLW} along with the convergence of $\Phi^{N_p}(t)(u_0,u_1,\o)$ towards $\Phi(t)(u_0,u_1,\o)$ given by the previous proposition and the convergence of $\rho_{N_p,k+1}$ towards $\rho_{k+1}$ given by Lemma \ref{lem-Gibbs}. Indeed, as in \cite{Tolomeo}, for any initial data $(u_0,u_1)$ with law $\rho$, any test function $F\in C_b(\H^s;\R)$ and any $t\ge 0$, we have by Lemma \ref{lem-Gibbs}, Proposition \ref{prop-approx} and the dominated convergence theorem
\begin{align*}
&\int_{\H^s(\M)}\int_\O  F\big[\Phi(t)(u_0,u_1,\o)\big]d\Prob(\o)d\rho_{k+1}(u_0,u_1)\\
&=\ZZ^{-1}\int_{\H^s(\M)}\int_\O F\big[\Phi(t)(u_0,u_1,\o))\big]e^{-G_{k+1}(u_0)}d\Prob(\o)d\mu(u_0,u_1)\\
&= \lim_{p\to\infty}\ZZ_{N_p}^{-1}\int_{\H^s}\int_\O F\big[\Phi^{N_p}(t)(u_0,u_1,\o)\big]e^{-G_{N_p,k+1}(u_0)}d\Prob(\o)d\mu(u_0,u_1)\intertext{where $\ZZ=\int_{\H^s(\M)}e^{-G_{k+1}(u_0)}d\mu(u_0,u_1)$ and $\ZZ_{N_p}=\int_{\H^s(\M)}e^{-G_{N_p,k+1}(u_0)}d\mu(u_0,u_1)$. Now we can use the invariance of $\rho_{N_p,k+1}$ under $\Phi^{N_p}(t)$ given by Proposition \ref{prop-FNLW}, and we can continue with}
&=\lim_{p\to\infty}\int_{\H^s}F(u_0,u_1)d\rho_{N_p,k+1}\\
&=\int_{\H^s} F(u_0,u_1)d\rho_{k+1}.
\end{align*}
This shows the invariance of $\rho_{k+1}$. Hence the proof of Theorem \ref{thm-invariance} will be completed once we prove the proposition.
\begin{proof}[Proof of Proposition~\ref{prop-approx}]
Let us fix $m,j\in\N$, $\dl>0$ as in \eqref{dlmj}, and $(u_0,u_1,\o)\in\bigcap_{p\in\N}\Si_{N_p}^{m,j}$. In the following, we fix two (large) integers $N,M\in\{N_p\}_{p\in\N}$. Again, we write
\begin{align*}
(w_N,\dt w_N)(t) = \Phi^N(t)(u_0,u_1,\o)-\Psi(t,u_0,u_1,\o),
\end{align*}
and we denote by $\Phi^N_1(t)$  (respectively $\Phi^N_2(t)$) the first (respectively second) component of $\Phi^N(t)$. We will control inductively the difference $(w_N(t),\dt w_N(t))-(w_M(t),\dt w_M(t))$ on the time intervals $[a\dl,(a+1)\dl]$, $a=1,...,[2^j/\dl]$. We begin by controlling the difference on the first time interval, corresponding to $a=0$. 

Then on $[0,\dl]$, we have for $N\le M$:
\begin{align}
&w_{M}(t)-w_N(t)\notag\\
&= \sum_{\ell=0}^k{\binom k \ell}\int_0^tV(t-t')\bigg\{\P_{M}\Big[H_{\ell}\big(\P_{M}\Psi(t',u_0,u_1,\o);\s_{M}(x)\big)\big(\P_{M}w_{M}(t')\big)^{k-\ell}\Big]\notag\\
&\qquad-\P_{N}\Big[H_{\ell}\big(\P_{N}\Psi(t',u_0,u_1,\o);\s_N(x)\big)\big(\P_Nw_N(t')\big)^{k-\ell}\Big]\bigg\}dt'\notag\\
&=\sum_{\ell=0}^{k}{\binom k \ell}\int_{0}^tV(t-t')\bigg\{(\P_{M}-\P_N)\Big[H_{\ell}\Big(\P_{M}\Psi(t',u_0,u_1,\o);\s_{M}(x)\Big)\big(\P_{M}w_{M}(t')\big)^{k-\ell}\Big]\notag\\
&\qquad+\P_{N}\Big[\Big(H_{\ell}\big(\P_{M}\Psi(t',u_0,u_1,\o);\s_{M}(x)\big)\notag\\
&\qquad-H_{\ell}\big(\P_{N}\Psi(t',u_0,u_1,\o);\s_N(x)\big)\Big)\big(\P_{M}w_{M}(t')\big)^{k-\ell}\Big]\notag\\
&\qquad+\P_N\Big[H_{\ell}\big(\P_{N}\Psi(t',u_0,u_1,\o);\s_N(x)\big)\Big(\big(\P_{M}w_{M}(t')\big)^{k-\ell}-\big(\P_Nw_N(t')\big)^{k-\ell}\Big)\Big]\bigg\}dt'\notag\\
&=:\I+\II+\III.
\label{I-II-III}
\end{align}

To estimate these terms, we proceed as in the proof of Proposition~\ref{prop-glocal}. We begin with
\begin{align*}
&\|\I\|_{C([0,\dl];H^{s_2})}\\&\les \sum_{\ell=0}^k\Big\|(\P_{M}-\P_N)\Big[H_{\ell}\Big(\P_{M}\Psi(u_0,u_1,\o);\s_{M}(x)\Big)\big(\P_{M}w_{M}\big)^{k-\ell}\Big]\Big\|_{L^1([0,\dl];H^{s_2-1})}\\
&\les \sum_{\ell=0}^k N^{s_2-s_1}\Big\|\Big[H_{\ell}\Big(\P_{M}\Psi(u_0,u_1,\o);\s_{M}(x)\Big)\big(\P_{M}w_{M}\big)^{k-\ell}\Big]\Big\|_{L^1([0,\dl];H^{s_1-1})}\\
&\les \sum_{\ell=0}^k N^{s_2-s_1}\dl^{\frac12}\big\|H_{\ell}\Big(\P_{M}\Psi(u_0,u_1,\o);\s_{M}(x)\Big)\big\|_{L^2([0,1];W^{s,\infty})}\|\P_{M}w_{M}\|_{C([0,\dl];H^{s_1})}^{k-\ell}\\
&\le C(m,j) N^{s_2-s_1},
\end{align*}
for some constant $C(m,j)$ independent of $N,M$, where the second to last estimate comes from the same argument as in the proof of Proposition \ref{prop-glocal}, and the last one from the condition \eqref{condition2} given by $(u_0,u_1,\o)\in \B_{M}^{m,j}(D)$, with \eqref{bd-wN} and the choice of $\dl$ in \eqref{dlmj}.

Similarly, we bound
\begin{align*}
&\|\mathrm{II}\|_{C([0,\dl];H^{s_2})}\\
&\les\sum_{\ell=1}^{k}\Big\|\Big[H_{\ell}\big(\P_{M}\Psi(u_0,u_1,\o);\s_{M}(x)\big)\\
&\qquad-H_{\ell}\big(\P_{N}\Psi(u_0,u_1,\o);\s_N(x)\big)\Big]\big(\P_{M}w_{M}\big)^{k-\ell}\Big\|_{L^1([0,\dl];H^{s_2-1})}\\
&\les \dl^{\frac12}\bigg\{\Big\|H_{k}\big(\P_{M}\Psi(u_0,u_1,\o);\s_{M}(x)\big)-H_{k}\big(\P_{N}\Psi(u_0,u_1,\o);\s_N(x)\big)\Big\|_{L^2([0,\dl];H^{s_2-1})}\\
&\qquad+\sum_{\ell=1}^{k-1}\Big\|H_{\ell}\big(\P_{M}\Psi(u_0,u_1,\o);\s_{M}(x)\big)-H_{\ell}\big(\P_{N}\Psi(u_0,u_1,\o);\s_N(x)\big)\Big\|_{L^2([0,\dl];W^{s,\infty})}\\
&\qquad\qquad\times\big\|\P_{M}w_{M}\big\|_{C([0,\dl];H^{s_1})}^{k-\ell}\bigg\}\\
& \le C(m,j) N^{-\eps},
\end{align*} 
where the last bound comes from \eqref{condition3} given by $(u_0,u_1,\o)\in\B_{M}^{m,j}(D)$ and from \eqref{bd-wN} with the choice of $\dl$. 

Finally, we can further decompose
\begin{align*}
\III&=\sum_{\ell=0}^{k-1}{\binom k \ell}\int_0^tV(t-t')\P_N\bigg\{H_{\ell}\big(\P_{N}\Psi(t',u_0,u_1,\o);\s_N(x)\big)\Big[\Big(w_N(t')^{k-\ell}-\big(\P_Nw_N(t')\big)^{k-\ell}\Big)\\
&\qquad\qquad+\Big(\big(\P_{M}w_{M}(t')\big)^{k-\ell}-w_{M}(t')^{k-\ell}\Big)+\Big(w_{M}(t')^{k-\ell}-w_N(t')^{k-\ell}\Big)\Big]\bigg\}dt'\\
&=:\III_1+\III_2+\III_3.
\end{align*}

We estimate similarly as before
\begin{align*}
\|\III_1\|_{C([0,\dl];H^{s_2})}&\les \sum_{\ell=0}^{k-1}\dl^{\frac12}\big\|H_{\ell}\big(\P_{N}\Psi(u_0,u_1,\o);\s_N(x)\big)\big\|_{L^2([0,1];W^{s,\infty})}\\
&\qquad\times\big\|(1-\P_{N})w_{N}\big\|_{C([0,\dl];H^{s_2})}\Big(\big\|\P_{N}w_{N}\big\|_{C([0,\dl];H^{s_2})}^{k-\ell-1}+\big\|w_{N}\big\|_{C([0,\dl];H^{s_2})}^{k-\ell-1}\Big)\\
&\le C(m,j) N^{s_2-s_1},
\end{align*}
where the first estimate follows from the same argument as in the proof of Proposition~\ref{prop-glocal}, provided that $s_2<s_1<1+s<1$ is close enough to 1. The same argument applies to $\III_2$ and gives the same bound (with $M$ in place of $N$), and the last term can be bounded similarly by
\begin{align*}
\big\|\III_3\big\|_{C([0,\dl];H^{s_2})}&\les\sum_{\ell=0}^{k-1}\dl^{\frac12}\big\|H_{\ell}\big(\P_{N}\Psi(u_0,u_1,\o);\s_N(x)\big)\big\|_{L^2([0,1];W^{s,\infty})}\\
&\qquad\times\big\|w_{M}-w_N\big\|_{C([0,\dl];H^{s_2})}\Big(\big\|w_{M}\big\|_{C([0,\dl];H^{s_2})}^{k-\ell-1}+\big\|w_{N}\big\|_{C([0,\dl];H^{s_2})}^{k-\ell-1}\Big)\\
&\le C\dl^{\frac12}D(m+j)^{\frac{k}2}\|w_{M}-w_N\|_{C([0,\dl];H^{s_2})},
\end{align*}
where the last estimate comes again from \eqref{condition2} thanks to $(u_0,u_1,\o)\in\B_{M}^{m,j}(D)$, and from \eqref{bd-wN} with the argument of Proposition~\ref{prop-glocal} applied with $s_2$ (provided that $s_2$ is sufficiently close to 1). With our choice of $C\dl^{\frac12}D(m+j)^{\frac{k}2}=C\dl^{\frac12}R\le \frac12$, we can absorb this last term in the left-hand side of \eqref{I-II-III}.

The same arguments also apply to control $\Phi^N_2(t)-\Phi^M_2(t)$ on $[0,\dl]$. Therefore, gathering the estimates above leads to
\begin{align}
\big\|\Phi^{M}(t)(u_0,u_1,\o)-\Phi^N(t)(u_0,u_1,\o)\big\|_{C([0,\dl];\H^{s_2})} \le C(m,j)\big(N^{s_2-s_1}+N^{-\eps}\big)
\label{S1}
\end{align}
for any $N\le M$. In particular this shows the convergence on the time interval $[0,\dl]$.

We now investigate the convergence on the second time interval $[\dl,2\dl]$: we first decompose
\begin{align*}
&\big\|\Phi^{M}(t+\dl)(u_0,u_1,\o)-\Phi^N(t+\dl)(u_0,u_1,\o)\big\|_{C([0,\dl];\H^{s_2})}\\& \le \big\|\Phi^{M}(t)\wt\Phi^{M}(\dl)(u_0,u_1,\o)-\Phi^N(t)\wt\Phi^{M}(\dl)(u_0,u_1,\o)\big\|_{C([0,\dl];\H^{s_2})}\\
&\qquad+\big\|\Phi^{N}(t)\wt\Phi^{M}(\dl)(u_0,u_1,\o)-\Phi^N(t)\wt\Phi^{N}(\dl)(u_0,u_1,\o)\big\|_{C([0,\dl];\H^{s_2})}.
\end{align*}

Note that replacing $\Psi(u_0,u_1,\o)$ by $\Psi\big(\wt\Phi^{M}(\dl)(u_0,u_1,\o)\big)$ in the previous estimates and using that we still have $\wt\Phi^{M}(\dl)(u_0,u_1,\o)\in\B_{M}^{m,j}$ by choice of $(u_0,u_1,\o)\in\Si_{M}^{m,j}$ shows that the first term above is still bounded by
\begin{align*}
&\big\|\Phi^{M}(t)\wt\Phi^{M}(\dl)(u_0,u_1,\o)-\Phi^N(t)\wt\Phi^{M}(\dl)(u_0,u_1,\o)\big\|_{C([0,\dl];\H^{s_2})}\\
&\qquad\le C(m,j)\big(N^{s_2-s_1}+N^{-\eps}\big)
\end{align*}
for any $N\le M$.

Thus we need to deal with the second term. We can redefine
\begin{align*}
w_{M}(t) = \Phi_1^{N}(t)\wt\Phi^{M}(\dl)(u_0,u_1,\o) - \Psi\big(t,\wt\Phi^{M}(\dl)(u_0,u_1,\o)\big)
\end{align*}
and
\begin{align*}
w_{N}(t) = \Phi_1^{N}(t)\wt\Phi^{N}(\dl)(u_0,u_1,\o) - \Psi\big(t,\wt\Phi^{N}(\dl)(u_0,u_1,\o)\big),
\end{align*}
and since $N,M\in\{N_p\}_{p\in\N}$ and $(u_0,u_1,\o)\in \bigcap_{p\in\N}\Si_{N_p}^{m,j}$ we have in particular that both $\wt\Phi^{M}(\dl)(u_0,u_1,\o)\in\B_M^{m,j}(D)$ and $\wt\Phi^{N}(\dl)(u_0,u_1,\o)\in\B_N^{m,j}(D)$ so that both $w_M$ and $w_N$ are well-defined and enjoy the bound \eqref{bd-wN}. Moreover, by definition of $\Phi_1^N(t)$ and $\Psi$, they satisfy the following Duhamel formula:
\begin{align*}
w_M = \sum_{\ell=0}^k{\binom k \ell}\int_0^tV(t-t')\P_N\bigg\{H_{\ell}\Big(\P_N\Psi\big(t',\wt\Phi^M(\dl)(u_0,u_1,\o)\big);\s_N(x)\Big)\big(\P_Nw_M(t')\big)^{k-\ell}\bigg\}dt',
\end{align*}
and
\begin{align*}
w_N = \sum_{\ell=0}^k{\binom k \ell}\int_0^tV(t-t')\P_N\bigg\{H_{\ell}\Big(\P_N\Psi\big(t',\wt\Phi^N(\dl)(u_0,u_1,\o)\big);\s_N(x)\Big)\big(\P_Nw_N(t')\big)^{k-\ell}\bigg\}dt'.
\end{align*}

To estimate in $C([0,\dl];H^{s_2}(\M))$ the difference
\begin{align*}
&\Phi_1^{N}(t)\wt\Phi^{M}(\dl)(u_0,u_1,\o)-\Phi_1^N(t)\wt\Phi^{N}(\dl)(u_0,u_1,\o)\\
&=\Psi\big(t,\wt\Phi^{M}(\dl)(u_0,u_1,\o)\big)-\Psi\big(t,\wt\Phi^{N}(\dl)(u_0,u_1,\o)\big)+w_M-w_N,
\end{align*}
we first bound directly the linear terms by
\begin{align*}
&\big\|\Psi\big(t,\wt\Phi^{M}(\dl)(u_0,u_1,\o)\big)-\Psi\big(t,\wt\Phi^{N}(\dl)(u_0,u_1,\o)\big)\big\|_{C([0,\dl];H^{s_2})}\\&\le \big\|\Phi^{M}(\dl)(u_0,u_1,\o)\big)-\Phi^{N}(\dl)(u_0,u_1,\o)\big\|_{\H^{s_2}}\\
& \le C(m,j)\big(N^{s_2-s_1}+N^{-\eps}\big)
\end{align*}
thanks to \eqref{def-Psidamp} and \eqref{S1}.

To estimate the difference of the nonlinear components, we decompose
\begin{align*}
&w_M-w_N\\
 &= \sum_{\ell=0}^k{\binom k \ell}\int_0^tV(t-t')\P_N\bigg\{H_{\ell}\Big(\P_N\Psi\big(t',\wt\Phi^M(\dl)(u_0,u_1,\o)\big);\s_N(x)\Big)\big(\P_Nw_M(t')\big)^{k-\ell}\\
&\qquad\qquad-H_{\ell}\Big(\P_N\Psi\big(t',\wt\Phi^N(\dl)(u_0,u_1,\o)\big);\s_N(x)\Big)\big(\P_Nw_N(t')\big)^{k-\ell}\bigg\}dt'\\
&=\sum_{\ell=0}^k{\binom k \ell}\int_0^tV(t-t')\P_N\bigg\{\Big[H_{\ell}\Big(\P_N\Psi\big(t',\wt\Phi^M(\dl)(u_0,u_1,\o)\big);\s_N(x)\Big)\\
&\qquad\qquad\qquad-H_{\ell}\Big(\P_N\Psi\big(t',\wt\Phi^N(\dl)(u_0,u_1,\o)\big);\s_N(x)\Big)\Big]\big(\P_Nw_M(t')\big)^{k-\ell}\\
&\qquad+H_{\ell}\Big(\P_N\Psi\big(t',\wt\Phi^N(\dl)(u_0,u_1,\o)\big);\s_N(x)\Big)\Big[\big(\P_Nw_M(t')\big)^{k-\ell}-\big(\P_Nw_N(t')\big)^{k-\ell}\Big]\bigg\}dt'\\
&=:\wt \II + \wt\III.
\end{align*}

First note that we can estimate $\wt \III$ exactly as $\III$ in \eqref{I-II-III}, giving the bound
\begin{align*}
\big\|\wt\III\big\|_{C([0,\dl];H^{s_2})}&\le C(m,j)N^{s_2-s_1} + \frac12\big\|w_M-w_N\big\|_{C([0,\dl];H^{s_2})}.
\end{align*}

Finally, we estimate the remaining term by 
\begin{align*}
&\big\|\wt\II\big\|_{C([0,\dl];H^{s_2})}\\&\les \sum_{\ell=1}^k\Big\|\Big[H_{\ell}\Big(\P_N\Psi\big(\wt\Phi^M(\dl)(u_0,u_1,\o)\big);\s_N(x)\Big)\\
&\qquad\qquad-H_{\ell}\Big(\P_N\Psi\big(\wt\Phi^N(\dl)(u_0,u_1,\o)\big);\s_N(x)\Big)\Big]\big(\P_Nw_M\big)^{k-\ell}\Big\|_{L^1([0,\dl];H^{s_2-1})}
\end{align*}
Writing then 
\begin{align*}
H_{\ell}(u;\s_N)-H_{\ell}(v;\s_N) = -\sum_{i=0}^{\ell-1}{\binom \ell i}H_i(u;\s_N)(v-u)^{\ell-i}
\end{align*}
thanks to \eqref{Hsum}, we can then estimate the previous term with
\begin{align*}
&\sum_{\ell=1}^k\sum_{i=0}^{\ell-1}\dl^{\frac12}\Big\|\Big[\P_N\Psi\big(\wt\Phi^M(\dl)(u_0,u_1,\o)\big)-\P_N\Psi\big(\wt\Phi^N(\dl)(u_0,u_1,\o)\big)\Big]^{\ell-i}\\
&\qquad\times H_{i}\Big(\P_N\Psi\big(\wt\Phi^N(\dl)(u_0,u_1,\o)\big);\s_N(x)\Big)\big(\P_Nw_M\big)^{k-\ell}\Big\|_{L^2([0,\dl];H^{s_2-1})}\\
&\les \sum_{\ell=1}^k\sum_{i=0}^{\ell-1}\dl^{\frac12}\Big\|H_i\Big(\P_N\Psi\big(\wt\Phi^N(\dl)(u_0,u_1,\o)\big);\s_N(x)\Big)\Big\|_{L^2([0,1];W^{s,\infty})}\\
&\qquad\times\Big\|\Psi\big(\wt\Phi^M(\dl)(u_0,u_1,\o)\big)-\Psi\big(\wt\Phi^N(\dl)(u_0,u_1,\o)\big)\Big\|_{C([0,\dl];H^{s_2})}^{\ell-i}\big\|w_M\big\|_{C([0,\dl];H^{s_2})}^{k-\ell},
\end{align*}
provided again that $s_2$ is close enough to 1 (depending on $k$). Using then that $\wt\Phi^N(\dl)(u_0,u_1,\o)\in\B_N^{m,j}(D)$ and \eqref{bd-wN},\eqref{S1}, we finally get
\begin{align*}
\big\|\wt\II\big\|_{C([0,\dl];H^{s_2})}\le C(m,j)\big(N^{s_2-s_1}+N^{-\eps}\big).
\end{align*}

Gathering the estimates above, we obtain 
\begin{align*}
\big\|w_M-w_N\big\|_{C([0,\dl];H^{s_2})}\le 4C(m,j)(N^{s_2-s_1}+N^{-\eps})
\end{align*}
which leads to
\begin{align*}
\big\|(w_N,\dt w_N)(t+\dl)-(w_M,\dt w_M)(t+\dl)\big\|_{C([0,\dl];\H^{s_2})} \le C_2(m,j)\big(N^{s_2-s_1}+N^{-\eps}\big)
\end{align*}
for some larger constant $C_2(m,j)\ge C(m,j)$. This shows that $\big\{(w_{N_p},\dt w_{N_p})\big\}_{p\in\N}$ is also a Cauchy sequence in $C([0,2\dl];\H^{s_2}(\M))$.

We can then proceed inductively on $a=0,...,[2^j/\dl]$ and repeat the previous estimates by using that at each step $\wt\Phi^N(a\dl)(u_0,u_1,\o)\in\B_N^{m,j}(D)$ since $(u_0,u_1,\o)\in \Si_N^{m,j}$. Thus we deduce that there exists a (large) constant $C_{2^j/\dl}(m,j)>0$ such that for any $N,M \in \{N_p\}_{p\in\N}$ with $N\le M$ it holds
\begin{align}
\big\|(w_N, \dt w_N)-(w_M,\dt w_M)\big\|_{C([0,2^j];\H^{s_2})} \le C_{2^j/\dl}(m,j)\big(N^{s_2-s_1}+N^{-\eps}\big).
\end{align}
This is enough to show the convergence of $\{\Phi^{N_p}(t)(u_0,u_1,\o)-\Psi(t)(u_0,u_1,\o)\}$ in $C([0,2^j];\H^{s_2}(\M))$. As a result, $\big\{\Phi^{N_p}(t)(u_0,u_1,\o)\big\}_{p\in\N}$ converges in $C([0,2^j];\H^s(\M))$. This concludes the proof of Proposition \ref{prop-approx}.
\end{proof}
\begin{remark}\rm
By slightly modifying the proof of Proposition~\ref{prop-approx}, we can indeed show that for $(u_0,u_1,\o)\in\Sigma$, the entire sequence $\big\{\Phi^N(t)(u_0,u_1,\o)\big\}_{N\in\N}$ converges in $C([0,2^j];\H^s(\M))$ for any $j\in\N$. See for example Corollary 9.11 in \cite{OOT}.
\end{remark}

\begin{ackno}\rm 
The authors are thankful to Younes Zine for careful proofreading. They also wish to thank the anonymous referees for their careful proofreading and helpful remarks which improved the quality of the paper.

T.O.~was supported by the European Research Council (grant no.~637995 ``ProbDynDispEq''
and grant no.~864138 ``SingStochDispDyn"). T.R.~was supported by the European Research Council (grant no.~637995 ``ProbDynDispEq'') and the German Research Foundation (DFG) through the CRC 1283. N.T.~was supported by the ANR grant ODA (ANR-18-CE40-0020-01).
\end{ackno}

\end{document}